\definecolor{dgreen}{rgb}{0.0, 0.5, 0.0}
\definecolor{byzantium}{rgb}{0.44, 0.16, 0.39}
\newcommand{\R}{\mathbb{R}}
\newcommand{\N}{\mathbb{N}}
\newcommand{\elts}{\{1,\cdots,N\}}
\newcommand{\sumi}{\displaystyle\sum_{i=1}^N}
\newcommand{\sumj}{\displaystyle\sum_{j=1}^N}
\newcommand{\C}{\mathcal{C}}
\newcommand{\X}{\mathcal{X}}
\newcommand{\sign}{\text{sgn}}
\def\N {\mathbb{N}}
\def\R {\mathbb{R}}
\def\S {\mathbb{S}}
\newcommand{\Lip}{\mathrm{Lip}}
\renewcommand{\P}{\mathcal{P}}
\newcommand{\mt}{\mu_t}
\newcommand{\PR}{\P(\R^d)}
\newcommand{\MR}{\mathcal{M}(\R^d)}
\newcommand{\MsR}{\mathcal{M}^s(\R^d)}
\newcommand{\Wabp}{W^{a,b}_p}
\newcommand{\supp}{\mathrm{supp}}
\newcommand{\muo}{\mu_0}
\newcommand{\dt}{\Delta t}
\newcommand{\mutk}{\mu_t^k}
\newcommand{\tx}{\tilde{x}}
\newcommand{\tm}{\tilde{m}}
\newcommand{\schema}[1]{\b{\sc #1}}
\newcommand{\ba}{\phi}
\newcommand{\Kx}{K_{x_0}}
\newcommand{\tT}{\tilde{T}}
\newcommand{\bS}{\bar{S}}
\newtheorem{theorem}{Theorem}
\newtheorem{maintheorem}{Theorem}
\newtheorem{prop}{Proposition}
\newtheorem{corollary}{Corollary}
\newtheorem{definition}{Definition}
\newtheorem{lemma}{Lemma}
\newtheorem{hyp}{Hypothesis}
\newtheorem{rem}{Remark}
\newcommand{\Woo}{ W^{1,1}_1}
\newcommand{\WWoo}{\mathbb W^{1,1}_1}
\newcommand{\PcR}{\P_c(\R^d)}
\newcommand{\D}{\mathcal{D}}
\newcommand{\munk}{\mu_{n\dt}^k}
\newcommand{\munnk}{\mu_{(n+1)\dt}^k}
\newcommand{\munkk}{\mu_{n\dt}^{k+1}}
\newcommand{\munnkk}{\mu_{(n+1)\dt}^{k+1}}
\newcommand{\mundk}{\mu_{(n+\frac12)\dt}^k}
\newcommand{\mundkk}{\mu_{(n+\frac12)\dt}^{k+1}}
\newcommand{\nunk}{\nu_{n}^k}
\newcommand{\nunkk}{\nu_{n}^{k+1}}
\newcommand{\nundk}{\nu_{n+\frac12}^k}
\newcommand{\nundkk}{\nu_{n+\frac12}^{k+1}}
\newcommand{\Hnk}{H_{n}^k}
\newcommand{\Hnkk}{H_{n}^{k+1}}
\newcommand{\Hndkk}{H_{n+\frac12}^{k+1}}
\newcommand{\dtt}{\frac{\dt}{2}}
\newcommand{\bmu}{\bar \mu}
\newcommand{\Rnk}{R_{n,k}}
\newcommand{\Cc}{C_{\mathrm{c}}}
\newcommand{\B}{\mathcal{B}}
\newcommand{\phiR}{\phi_R}
\newcommand{\La}{L_\phi}
\newcommand{\mutv}{|\mu|}
\DeclarePairedDelimiter\TV{|}{|}
\newcommand{\mus}{\mu^*}
\newcommand{\mun}{\mu_n}
\newcommand{\muunk}{\mu_{n_k}}
\newcommand{\M}{M_{\minit}}
\newcommand{\Km}{K_{\minit}}
\newcommand{\xinit}{x^\mathrm{in}}
\newcommand{\minit}{m^\mathrm{in}}
\newif\ifNikitaiscool
\newif\ifAppendix
\title{\LARGE \bf
Mean-field limit of collective dynamics with time-varying weights
}
\author{
Nastassia Pouradier Duteil\thanks{Sorbonne Universit\'e, Inria, Universit\'e Paris-Diderot SPC, CNRS, Laboratoire Jacques-Louis Lions, Paris, France} }
\begin{document}

\date{}
\maketitle

\thispagestyle{plain}
\pagestyle{plain}
\bibliographystyle{abbrv}

\abstract{In this paper, we derive the mean-field limit of a collective dynamics model with time-varying weights, for weight dynamics that preserve the total mass of the system as well as indistinguishability of the agents. 
The limit equation is a transport equation with source, where the (non-local) transport term corresponds to the position dynamics, and the (non-local) source term comes from the weight redistribution among the agents.
We show existence and uniqueness of the solution for both microscopic and macroscopic models and introduce a new empirical measure taking into account the weights. We obtain the convergence of the microscopic model to the macroscopic one by showing continuity of the macroscopic solution with respect to the initial data, in the Wasserstein and Bounded Lipschitz topologies.
}


\section*{Introduction}

A wide range of mathematical models fall into the category of \textit{interacting particle systems}.
Whether they describe the trajectories of colliding particles \cite{Degond04}, the behavior of animal groups \cite{Aoki82,CS07,G08,VCABC95}, the cooperation of robots \cite{B09} or the evolution of opinions \cite{DG74,F56,HK02},
their common objective is to model the dynamics of a group of particles in interaction.
Some of the most widely used models include the Hegselmann-Krause model for opinion dynamics \cite{HK02}, the Vicsek model for fish behavior \cite{VCABC95} and the Cucker-Smale model for bird flocks \cite{CS07}.
Two main points of view can be adopted in the modeling process. The Lagrangian (or microscopic) approach deals with individual particles and models the trajectory of each one separately, via a system of coupled Ordinary Differential Equations (ODE).
This approach's major limitation is that the dimension of the resulting system is proportional to the number of particles, which can quickly become unmanageable. 
To combat this effect,
one can instead adopt the Eulerian (or macroscopic) approach, and track the concentration of particles at each point of the state space. The resulting equation is a Partial Differential Equation (PDE) giving the evolution of the density of particles over the state space, and whose dimension is independent of the number of particles.

The question of how microscopic properties of particles give rise to macroscopic properties of the system is fundamental in physics.
A way to connect the microscopic and the macroscopic points of view is through the \textit{mean-field limit}.
First introduced in the context of gas dynamics, the mean-field limit, applied to systems of weakly interacting particles with a large radius of interaction, derives the macroscopic equation as the limit of the microscopic one when the number of particles tends to infinity \cite{braun1977, Dobrushin79}. 
The term \textit{mean-field} refers to the fact that the effects of all particles located at the same position are averaged, instead of considering the individual force exerted by each particle.
The mean-field limits of the Hegselmann-Krause, Vicsek and Cucker-Smale models were derived in \cite{Dobrushin79, CFT08, DM08, HT08}.
More specifically, the mean-field limit of a general system of interacting particles described by 
\begin{equation}\label{eq:syst-micro-gen}
\displaystyle \dot{x}_i(t) = \frac{1}{N}\sum\limits_{j=1}^N  \ba(x_j(t)-x_i(t)) \\
\end{equation}
is given by the non-local transport equation in the space of probability measures
\begin{equation}\label{eq:transport-gen}
\partial_t \mt(x) + \nabla\cdot \left(V[\mu_t](x) \mt(x)\right) = 0, \qquad V[\mu_t](x) = \int_{\R^d} \ba(y-x) d\mu_t(y),
\end{equation}
where $\mu_t(x)$ represents the density of particles at position $x$ and time $t$, and where the velocity $ V[\mu_t]$ is given by convolution with the density of particles.
The proof of the mean-field limit lies on the key observation that the \textit{empirical measure}
$
\mu_t^N = \frac{1}{N}\sum_{i=1}^N \delta_{x_i(t)},
$
defined from the positions of the $N$ particles satisfying the microscopic system \eqref{eq:syst-micro-gen},
is actually a solution to the macroscopic equation \eqref{eq:transport-gen}.
Notice that the passage from the microscopic system to its macroscopic formulation via the empirical measure entails an irreversible information loss. Indeed, the empirical measure keeps track only of the number (or proportion) of particles at each point of space, and loses the information of the indices, that is the ``identity'' of the particles.
This observation illustrates a necessary condition for the mean-field limit to hold: the \textit{indistinguishability} of particles. Informally, two particles $x_i$, $x_j$ are said to be indistinguishable if they can be exchanged without modifying the dynamics of the other particles. System \eqref{eq:syst-micro-gen} satisfies trivially this condition, since the interaction function $\phi$ depends only on the positions of the particles and not on their indices.

In \cite{McQuadePiccoliPouradierDuteil19, PiccoliPouradierDuteil20}, we introduced an augmented model for opinion dynamics with time-varying influence. 
In this model, each particle, or agent, is represented both by its opinion $x_i$ and its weight of influence $m_i$.
The weights are assumed to evolve in time via their own dynamics, and model a modulating social hierarchy within the group, where the most influential agents (the ones with the largest weights) have a stronger impact on the dynamics of the group.
The microscopic system is written as follows:
\begin{equation}\label{eq:syst-micro-intro}
\displaystyle \dot{x}_i(t) = \frac{1}{M} \sum\limits_{j=1}^N m_j(t) \ba(x_j(t)-x_i(t)), \qquad
\displaystyle \dot{m}_i(t) = \psi_i\Big((x_j(t))_{j\in\elts},(m_j(t))_{j\in\elts}\Big),
\end{equation}
where the functions $\psi_i$ give the weights' dynamics.

As for the classical dynamics \eqref{eq:syst-micro-gen}, we aim to address the natural question of the large population limit.
To take into account the weights of the particles, we can define a modified empirical measure by
$
\mu_t^N = \frac{1}{M}\sum_{i=1}^N m_i(t) \delta_{x_i(t)},
$
so that $\mu_t^N(x)$ represents the \textit{weighted proportion} of the population with opinion $x$ at time $t$. 
In this new context, informally, indistinguishability is satisfied if agents $(x_i,m_i)$ and $(x_j,m_j)$ can be exchanged or grouped without modifying the overall dynamics. 
However, this condition may or may not be satisfied, depending on the weight dynamics $\psi_i$ in the general system \eqref{eq:syst-micro-intro}.
In \cite{AyiPouradierDuteil20}, we derived the graph limit of system \eqref{eq:syst-micro-intro} for a general class of models in which indistinguishability is not necessarily satisfied.
Here, on the other hand, in order to derive the mean-field limit of system \eqref{eq:syst-micro-intro}, we will focus on a subclass of mass dynamics that does preserve indistinguishability of the particles, given by:
\begin{equation}\label{eq:massdyn-intro}
\psi_i(x,m) := m_i \frac{1}{M^q} \sum_{j_1=1}^N\cdots\sum_{j_q=1}^N m_{j_1} \cdots m_{j_q} S(x_i, x_{j_1},\cdots x_{j_q}).
\end{equation}
Given symmetry assumptions on $S$, this specific choice of weight dynamics ensures that the weights remain positive, and also preserves the total weight of the system. From a modeling point of view, since the weights represent the agents' influence on the group, it is natural to restrict them to positive values. The total weight conservation implies that no weight is created within the system, and that the only weight variations are due to redistribution. 
One can easily prove that if $(x_i,m_i)_{i\in\elts}$ satisfy the microscopic system \eqref{eq:syst-micro-intro}-\eqref{eq:massdyn-intro}, the modified empirical measure $\mu_t^N$ satisfies the following transport equation with source 
\begin{equation}\label{eq:transportsource-intro}
\partial_t \mt(x) + \nabla\cdot \left(V[\mu_t](x) \mt(x)\right) = h[\mt](x),
\end{equation}
in which the left-hand part of the equation, representing non-local transport, is identical to the limit PDE \eqref{eq:transport-gen} for the system without time-varying weights. The non-local source term of the right-hand side corresponds to the weight dynamics and is given by convolution with $\mu_t$: 
\[
 h[\mu_t](x) = \left(\int_{(\R^d)^q} S(x,y_1,\cdots,y_q) d\mu_t(y_1)\cdots d\mu_t(y_q)\right) \mu_t(x).
\]
Since we impose no restriction on the sign of $S$, this source term $h[\mt]$ belongs to the set of signed Radon measures, even if (as we will show), $\mu_t$ remains a probability measure at all time.

In \cite{PiccoliRossiTournus20}, well-posedness of \eqref{eq:transportsource-intro} was proven for a globally bounded source term satisfying a global Lipschitz condition with respect to the density $\mu_t$. However, the possibly high-order non-linearity of our source term $h[\mt]$ prevents us from applying these results in our setting.

Thus, the aim of this paper is to give a meaning to the transport equation with source \eqref{eq:transportsource-intro}, to prove existence and uniqueness of its solution, and to show that it is the mean-field limit of the microscopic system \eqref{eq:syst-micro-intro}-\eqref{eq:massdyn-intro}.
Our central result can be stated as follows: 
\begin{theorem}\label{th:convergence}
For each $N\in\N$, 
let $(x^N_i, m^N_i)_{i\in\elts}$ be the solutions to \eqref{eq:syst-micro-intro}-\eqref{eq:massdyn-intro} on $[0,T]$,
and let $\mu^N_t:= \frac{1}{M}\sum_{i=1}^N m_i^N(t) \delta_{x_i^N(t)}$ be the corresponding empirical measures.
If there exists $\mu_0 \in \PcR$ such that 
$
\lim_{N\rightarrow\infty} \mathscr{D}(\mu^N_0,\mu_0) = 0, 
$
then for all $t\in [0,T]$, 
$$
\lim_{N\rightarrow\infty} \mathscr{D}(\mu^N_t,\mu_t) = 0, 
$$
where $\mu_t\in\PcR$ is the solution to the transport equation with source \eqref{eq:transportsource-intro}.
\end{theorem}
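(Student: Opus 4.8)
The plan is to exploit the observation, recalled above, that the empirical measure $\mu_t^N$ is \emph{itself} a solution of the macroscopic equation \eqref{eq:transportsource-intro}, with initial datum $\mu_0^N$. Since $\mu_t$ is by definition the solution with initial datum $\mu_0$, both $\mu_t^N$ and $\mu_t$ solve the \emph{same} PDE and differ only through their initial conditions. The convergence statement therefore reduces to a \emph{continuous dependence on the initial data} estimate: it suffices to establish, for two solutions $\mu_t,\nu_t$ of \eqref{eq:transportsource-intro}, a Gronwall-type bound
$$
\mathscr{D}(\mu_t,\nu_t)\le e^{Ct}\,\mathscr{D}(\mu_0,\nu_0),
$$
with $C$ depending only on $T$, on the Lipschitz constants of $\phi$ and $S$, and on uniform bounds on the masses and supports. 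Applying this with $\nu=\mu^N$ and letting $N\to\infty$ then yields the theorem, uniformly on $[0,T]$.

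I would first record the a priori bounds that make $C$ uniform. Since $\phi$ is bounded and the total mass is conserved (equal to $1$), a Gronwall argument on the characteristics shows that the supports of all the $\mu_t^N$ and of $\mu_t$ remain inside a fixed ball $B_R$ on $[0,T]$, while the masses stay equal to one. I would then set up the comparison through the characteristic/Duhamel representation of \eqref{eq:transportsource-intro}: writing the source as $h[\mu_t]=r[\mu_t]\,\mu_t$ with the scalar reaction rate $r[\mu_t](x)=\int_{(\R^d)^q} S(x,y_1,\dots,y_q)\,d\mu_t(y_1)\cdots d\mu_t(y_q)$, the solution is transported along the flow of $V[\mu_t]$ while its carried mass is modulated by $r[\mu_t]$. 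Comparing two solutions amounts to coupling their characteristics and weights and controlling the growth of the coupling, which requires two stability estimates: a bound on $\|V[\mu]-V[\nu]\|$ by $\mathscr{D}(\mu,\nu)$, immediate from the boundedness and Lipschitz continuity of $\phi$ via the dual formulation of the distance, and a bound on $\|r[\mu]-r[\nu]\|$ by $\mathscr{D}(\mu,\nu)$.

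The main obstacle is precisely this last estimate, since $r[\mu]$ is a $q$-fold product in $\mu$ and is exactly the high-order nonlinearity that rules out the results of \cite{PiccoliRossiTournus20}. I would handle it by the telescoping identity
$$
\mu^{\otimes q}-\nu^{\otimes q}=\sum_{k=1}^q \mu^{\otimes(k-1)}\otimes(\mu-\nu)\otimes\nu^{\otimes(q-k)},
$$
which reduces the difference to first order in $\mu-\nu$; combined with the Lipschitz continuity of $S$ and the uniform mass and support bounds of the previous step, this yields $\|r[\mu]-r[\nu]\|\le C_q\,\mathscr{D}(\mu,\nu)$ with $C_q$ depending on $q$, $R$ and $\Lip(S)$. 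Feeding both stability estimates into the Gronwall inequality for the coupled characteristics closes the argument. Finally, the signed-measure nature of $h[\mu_t]$ is accommodated by carrying out the estimate in the bounded Lipschitz (dual-Lipschitz) distance, which extends naturally to signed measures, and then transferring it to the generalized Wasserstein distance on the compactly supported probability measures $\PcR$.
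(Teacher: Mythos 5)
Your overall architecture is exactly the paper's: the empirical measure $\mu^N_t$ is an exact weak solution of \eqref{eq:transportsource-intro} with datum $\mu^N_0$, so the theorem reduces to continuous dependence of solutions on the initial data, i.e.\ to the estimate $\rho(\mu_t,\nu_t)\le e^{Ct}\rho(\mu_0,\nu_0)$ (Proposition~\ref{Prop:conti}), which is then transferred to $W_1$ and $W_p$ on uniformly compactly supported probability measures exactly as in Corollary~\ref{Col:W1}. Where you genuinely diverge is in how the continuity estimate itself is obtained. The paper proves it by comparing both solutions to one step of the operator-splitting scheme $\S$: it establishes the quadratic splitting estimate $\rho(\mu_{t+\tau},\Phi_\tau^{V[\mu_t]}\#\mu_t+\tau h[\mu_t])\le K\tau^2$ by induction along the scheme, and then runs a discrete Gronwall argument purely at the level of the distance $\rho$, using the flow estimates of Proposition~\ref{prop:flowproperties} and the Lipschitz bound $\rho(h[\mu],h[\nu])\le L_h\rho(\mu,\nu)$. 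You instead propose a Lagrangian/Duhamel representation, writing $h[\mu]=r[\mu]\mu$ and coupling characteristics and carried weights; your two stability inputs (the $L^\infty$ bound on $V[\mu]-V[\nu]$ by $\rho(\mu,\nu)$, and the telescoping $\mu^{\otimes q}-\nu^{\otimes q}=\sum_k \mu^{\otimes(k-1)}\otimes(\mu-\nu)\otimes\nu^{\otimes(q-k)}$ for the reaction rate) are precisely the content of Propositions~\ref{Prop:V} and~\ref{Prop:h}(iii), so the quantitative heart is the same. The trade-off: your route is more transparent and avoids the scheme entirely, but it silently assumes that a weak solution in the sense of Definition~\ref{Def:weaksol} admits the weighted push-forward representation along its own characteristics, which is not automatic for an arbitrary weak solution and is essentially of the same difficulty as the uniqueness you are trying to prove. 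This gap is harmless for Theorem~\ref{th:convergence} itself, since there one of the two measures is the empirical one (manifestly Lagrangian) and the other can be taken to be the solution constructed by the scheme, but you should either state that restriction or justify the representation; the paper's splitting argument is designed precisely to sidestep this issue by never leaving the metric framework.
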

The convergence holds in the Bounded Lipschitz and in the Wasserstein topologies, where $\mathscr{D}$ represents either the Bounded Lipschitz distance, or any of the $p$-Wasserstein distances ($p\in\N^*$).
In particular, we show that the solution stays a probability measure at all time, a consequence of the total mass conservation at the microscopic level.

We begin by presenting the microscopic model, and by showing that under key assumptions on the mass dynamics, it preserves not only indistinguishability of the agents, but also positivity of the weights as well as the total weight of the system.
We then recall the definition and relationship between the Wasserstein, Generalized Wasserstein and Bounded Lipschitz distances. 
The third section is dedicated to the proof of existence and uniqueness of the solution to the macroscopic equation, by means of an operator-splitting numerical scheme. We show continuity with respect to the initial data in the Bounded Lipschitz and Wasserstein topologies. This allows us to conclude with the key convergence result, in Section 4. 
Lastly, we illustrate our results with numerical simulations comparing the solutions to the microscopic and the macroscopic models, for a specific choice of weight dynamics.

\section{Microscopic Model}

In \cite{McQuadePiccoliPouradierDuteil19}, a general model was introduced for opinion dynamics with time-varying influence. Given a set of $N$ agents with positions $(x_i)_{i\in\elts}$ and weights $(m_i)_{i\in\elts}$, an agent $j$ influences another agent $i$'s position (or opinion) depending on the distance separating $i$ and $j$, as well as on the weight (or ``influence'') of $j$. In parallel, the evolution of each agent's weight $m_j$ depends on all the agents' positions and weights. In this general setting, the system can be written as: 

\begin{equation}\label{eq:syst-microgen}
\begin{cases}
\displaystyle \dot{x}_i(t) = \frac{1}{M} \sum\limits_{j=1}^N m_j(t) \ba(x_j(t)-x_i(t)), \\
\displaystyle \dot{m}_i(t) = \psi_i\Big((x_j(t))_{j\in\elts},(m_j(t))_{j\in\elts}\Big),
\end{cases}
\quad i\in\elts,
\end{equation}
where $M = \sum_{i=1}^N m_i^0$ represents the initial total mass of the system, $\phi\in C(\R^{dN}; \R^{dN})$ denotes the interaction function and $\psi_i\in C(\R^{dN}\times\R^{N}; \R)$ dictates the weights' evolution.
Well-posedness of this general system was proven in \cite{AyiPouradierDuteil20}, for suitable weight dynamics $\psi_i$.

In this paper, we aim to study the mean-field limit of system \eqref{eq:syst-microgen} for a more specific choice of weight dynamics that will ensure the following properties: 
\begin{itemize}
\item positivity of the weights: $m_i\geq 0$ for all $i\in\elts$;
\item conservation of the total mass: $\sum_{i=1}^N m_i \equiv M$;
\item indistinguishability of the agents.
\end{itemize}
These key properties will be used extensively to prove well-posedness of the system and convergence to the mean-field limit. We now introduce the model that will be our focus for the rest of the paper.
Let $(x_i^0)_{i\in\elts}\in\R^{dN}$ and $(m_i^0)_{i\in\elts}\in (\R^+)^N$. We study the evolution of the $N$ positions and weights according to the following dynamics:
\begin{equation}\label{eq:syst-micro}
\begin{cases}
\displaystyle \dot{x}_i(t) = \frac{1}{M} \sum\limits_{j=1}^N m_j(t) \phi \left(x_j(t)-x_i(t)\right), \quad x_i(0)=x_i^0 \\
\displaystyle \dot{m}_i(t) = m_i(t)\frac{1}{M^q} \sum_{j_1=1}^N \cdots \sum_{j_q=1}^N m_{j_1}(t)\cdots m_{j_q}(t) S(x_i(t), x_{j_1}(t), \cdots x_{j_q}(t)), \quad m_i(0)=m_i^0
\end{cases}
\end{equation}
where $\ba $
and $S$ satisfy the following hypotheses:
\begin{hyp}\label{hyp:abar}
 $\ba\in \Lip(\R^d;\R^d)$ with $\|\ba\|_\Lip := L_\phi$. 
\end{hyp}
\begin{rem}
The most common models encountered in the literature use an interaction function $\ba$ of one of the following forms: 
\begin{itemize}
\item $\ba(x):= a(|x|)x$ for some $a:\R^+\rightarrow\R$ 
\item $\ba(x):=\nabla W(x)$ is the gradient of some interaction potential $W:\R^d\rightarrow\R$.
\end{itemize}
\end{rem}
\begin{hyp}\label{hyp:S}
 $S\in C((\R^d)^{q+1}; \R)$ is 
 globally bounded and 
 Lipschitz. More specifically, there
 exist $\bS$, 
 $L_S>0$ such that 
\begin{equation}\label{eq:psiSkbound}
 \forall y\in(\R^d)^{q+1}, \quad |S(y)|\leq \bS.
\end{equation}
and  
\begin{equation}\label{eq:psiSklip}
 \forall y\in (\R^d)^{q+1}, \forall z\in (\R^d)^{q+1}, | S(y_0,\cdots,y_q) - S(z_0,\cdots,z_q) | \leq L_S \sum_{i=0}^q |y_i-z_i|.
\end{equation}
Furthermore, we require that $S$ satisfy the following skew-symmetry property: there exists $(i,j)\in \{0,\cdots,q\}^2$ such that for all $y\in(\R^d)^{q+1}$, \;
\begin{equation}\label{eq:condS}
  S(y_0,\cdots, y_i,\cdots,y_j,\cdots,y_q) =-S(y_0,\cdots, y_j,\cdots,y_i,\cdots,y_q) .
\end{equation}
\end{hyp}
\begin{rem}
The global boundedness of $S$ \eqref{eq:psiSkbound} is assumed to simplify the presentation, but all our results also hold without this assumption. Indeed, the continuity of $S$ is enough to infer the existence of a global bound $S_R$ as long as all $x_i$ are contained in the ball $S(0,R)$, or, in the macroscopic setting, as long as $\supp(\mu)\subset B(0,R)$.
\end{rem}
The skew-symmetric property of $S$ is essential in order to prevent blow-up of the individual weights. Indeed, as we show in the following proposition, it allows us to prove that the total mass is conserved and that each of the weights stays positive. Thus, despite the non-linearity of the weight dynamics, the weights remain bounded at all time, and in particular there can be no finite-time blow-up, which will ensure the existence of the solution. 
\begin{prop}\label{Prop:m}
Let $(x,m)\in C([0,T];(\R^d)^N\times\R^{N})$ be a solution to \eqref{eq:syst-micro}. Then it holds:
\begin{enumerate}[(i)]
\item For all $t\in [0,T]$, $ \sum_{i=1}^N m_i(t) = M.$
\item If for all $i\in\elts$, $m^0_i>0$, then for all $t\in [0,T]$,
for all $ i\in \elts$, $m_i(t)>0$.
\item If for all $i\in\elts$, $m^0_i>0$, then for all $t\in [0,T]$,
for all $ i\in \elts$, $m_i(t)\leq m^0_i e^{\bS t}$.
\end{enumerate}
\end{prop}
\begin{proof}
We begin by proving that the total mass $\sum_i m_i$ is invariant in time.
Without loss of generality, let us suppose that for all $y\in(\R^d)^{q+1}$,
\[
S(y_0, y_1,\cdots,y_q) =-S(y_1,y_0,\cdots,y_q).
\]
It holds
\[
\begin{split}
\frac{d}{dt}\sum_{i=1}^N m_i = & \sum_{j_0=1}^N m_{j_0}\frac{1}{M^q} \sum_{j_1=1}^N \cdots \sum_{j_q=1}^N m_{j_1}\cdots m_{j_q} S(x_{j_0}, x_{j_1}, \cdots x_{j_q}) \\
 = & \frac{1}{M^q} \sum_{j_0<j_1} \sum_{j_2=1}^N \cdots \sum_{j_q=1}^N m_{j_0} m_{j_1}\cdots m_{j_q} S(x_{j_0}, x_{j_1}, \cdots x_{j_q}) \\
& + \frac{1}{M^q} \sum_{j_0>j_1} \sum_{j_2=1}^N \cdots \sum_{j_q=1}^N m_{j_0} m_{j_1}\cdots m_{j_q} S(x_{j_0}, x_{j_1}, \cdots x_{j_q})\\
= & \frac{1}{M^q} \sum_{j_0<j_1} \sum_{j_2=1}^N \cdots \sum_{j_q=1}^N m_{j_0} m_{j_1}\cdots m_{j_q} S(x_{j_0}, x_{j_1}, \cdots x_{j_q}) \\
& + \frac{1}{M^q} \sum_{j_1>j_0} \sum_{j_2=1}^N \cdots \sum_{j_q=1}^N m_{j_1} m_{j_0}\cdots m_{j_q} S(x_{j_1}, x_{j_0}, \cdots x_{j_q})
=0
\end{split}
\]
by the antisymmetry property of $S$.

Let us now suppose that $m_i^0>0$ for all $i\in\elts$.
Let $t^*:=\inf\{t\geq 0\, | \, \exists i\in \elts, \; m_i(t)=0\}$.
Assume that $t^*<\infty$. Then for all $i\in\elts$, for all $t<t^*$, 
\[
\begin{split}
\dot m_i =  m_i \frac{1}{M^q} \sum_{j_1=1}^N \cdots \sum_{j_q=1}^N m_{j_1}\cdots m_{j_q} S(x_{i}, x_{j_1}, \cdots x_{j_q})
\geq  - m_i \frac{1}{M^q} \sum_{j_1=1}^N \cdots \sum_{j_q=1}^N m_{j_1}\cdots m_{j_q} \bS = - \bS m_i,
\end{split}
\]
where the last equality comes from the first part of the proposition.
From Gronwall's Lemma, for all $t<t^*$, it holds
\[ 
m_i(t) \geq m_i^0 e^{-\bS t} \geq m_i^0 e^{-\bS t^*}>0. 
\]
Since $m_i$ is continuous, this contradicts the fact that there exists $i\in\elts$ such that $m_i(t^*)=0$. Hence for all $t\geq 0$, $m_i(t)>0$.

Lastly, the third point is a consequence of Gronwall's Lemma.
\end{proof}
Well-posedness of the system \eqref{eq:syst-micro} is a consequence of the boundedness of the total mass. We have the following result. 
\begin{prop}\label{Prop:WellPosmicro}
For all $T>0$, there exists a unique solution to \eqref{eq:syst-micro} defined on the interval $[0,T]$.
\end{prop}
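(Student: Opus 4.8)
The plan is to invoke the Cauchy--Lipschitz theorem to obtain a unique local solution, and then to promote it to a solution on all of $[0,T]$ by ruling out finite-time blow-up via a priori bounds. First I would observe that the right-hand side of \eqref{eq:syst-micro}, viewed as a vector field on $(\R^d)^N\times\R^N$, is locally Lipschitz: the maps $(x,m)\mapsto \frac{1}{M}\sum_{j=1}^N m_j \ba(x_j-x_i)$ and $(x,m)\mapsto m_i\frac{1}{M^q}\sum_{j_1,\dots,j_q} m_{j_1}\cdots m_{j_q} S(x_i,x_{j_1},\dots,x_{j_q})$ are sums and products of the locally Lipschitz functions $\ba$ and $S$ (Hypotheses \ref{hyp:abar}--\ref{hyp:S}) with polynomials in $m$, hence locally Lipschitz on every compact set. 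Cauchy--Lipschitz then yields a unique maximal solution defined on some interval $[0,T_{\max})$, and the whole task reduces to showing $T_{\max}>T$.

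For this I would establish a priori bounds valid on the maximal interval. The weights are controlled directly by Proposition \ref{Prop:m}, whose proof relies only on the algebraic mass-conservation identity and on Gronwall's lemma and therefore applies on $[0,T_{\max})$ without change: for every $t\in[0,T_{\max})\cap[0,T]$ one has $0<m_i(t)\le m_i^0 e^{\bS t}\le m_i^0 e^{\bS T}$, so the weight vector stays in a fixed compact subset of $\R^N$. For the positions, writing $R(t):=\max_i |x_i(t)|$ and using that $\ba$ is Lipschitz, so $|\ba(z)|\le |\ba(0)| + \La|z|$, together with the conservation $\sum_{j=1}^N m_j\equiv M$ and positivity of the weights, I would bound
$$
|\dot x_i(t)| \le \frac{1}{M}\sum_{j=1}^N m_j(t)\bigl(|\ba(0)| + \La|x_j-x_i|\bigr) \le |\ba(0)| + 2\La\, R(t),
$$
where I used that $\frac{1}{M}\sum_{j} m_j|x_j|$ is a convex average bounded by $R(t)$. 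A standard Dini-derivative version of Gronwall's lemma applied to $R$ then yields an explicit bound on $R(t)$ on $[0,T]$, so the positions too remain in a fixed compact set.

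Finally I would close with the blow-up alternative: if $T_{\max}\le T$, the two bounds above would force $(x(t),m(t))$ to stay in a fixed compact subset of $(\R^d)^N\times\R^N$ as $t\uparrow T_{\max}$, contradicting the characterization of the maximal time of existence. Hence $T_{\max}>T$, and the solution is defined and unique on all of $[0,T]$. The only genuine obstacle is the high-order polynomial nonlinearity of the weight dynamics, which is far from globally Lipschitz and a priori permits finite-time blow-up; this is precisely what the positivity and total-mass conservation of Proposition \ref{Prop:m} neutralize, after which the position estimate is routine because the velocity is affine in the positions with weights summing to the constant $M$.
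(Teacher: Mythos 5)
Your proof is correct, but it takes a genuinely different route from the paper's. The paper (Appendix A.1) never invokes the maximal-solution/blow-up alternative: it constructs the solution by alternating Picard iterations on the two decoupled subsystems --- given weights $\tm$ it solves the position equation \eqref{eq:x-decoupled} by a Banach fixed point, given positions $\tx$ it solves the weight equation \eqref{eq:m-decoupled} by another Banach fixed point, and then iterates $(x^{n-1},m^{n-1})\mapsto(x^n,m^n)$, proving uniform bounds $X_T$, $M_T$ by Gronwall inside the iteration and showing the sequence is Cauchy in $C([0,T])$, with uniqueness from a final Gronwall estimate on the difference of two solutions. Your argument --- local Lipschitz continuity of the full vector field, hence a unique maximal solution, then the a priori bounds of Proposition \ref{Prop:m} on the weights (whose proof indeed uses only the ODE, the skew-symmetry of $S$ and Gronwall, so it is legitimate on $[0,T_{\max})$) combined with the affine-in-$x$ velocity bound $|\dot x_i|\leq|\ba(0)|+2\La R(t)$ to exclude blow-up --- is shorter and more transparent, and it correctly identifies that positivity and mass conservation are exactly what tame the degree-$(q+1)$ nonlinearity in the weight equation (note that the convex-average step $\frac{1}{M}\sum_j m_j|x_j|\leq R(t)$ genuinely needs both $m_j\geq 0$ and $\sum_j m_j=M$, which you have). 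What the paper's longer construction buys is self-containedness and a structure that mirrors the graph-limit well-posedness proof of the cited companion work, together with explicit iterate-level estimates; both approaches rest on the same two a priori facts and both are valid.
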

\begin{proof}
The proof, modeled after the proofs for the well-posedness of the Graph Limit model in \cite{AyiPouradierDuteil20}, is provided in Appendix \ref{Sec:ExUniqMicro}.
\end{proof}

We draw attention to the fact that System \eqref{eq:syst-micro} also preserves indistinguishability of the agents. This property, introduced in \cite{PiccoliPouradierDuteil20} and \cite{AyiPouradierDuteil20}, is necessary for the definition of empirical measure to make sense in this new setting.

Indeed, the empirical measure, defined by $\mu^N_t = \frac{1}{M}\sum_{i=1}^N m_i^N(t) \delta_{x_i^N(t)}$ is invariant by relabeling of the indices or by grouping of the agents. Hence for the macroscopic model to reflect the dynamics of the microscopic one, the microscopic dynamics must be the same for relabeled or grouped initial data. This leads us to the following \textit{indistinguishability} condition:

\begin{definition}\label{Def:indist}
We say that system \eqref{eq:syst-microgen} satisfies \emph{indistinguishability} if
for all $J \subseteq \elts$, 
for all $(x^0, m^0)\in \R^{dN}\times\R^N$ and $(y^0, p^0)\in \R^{dN}\times\R^N$ satisfying 
\begin{equation*}
\begin{cases}
x_i^0 = y_i^0 = x_j^0 = y_j^0 \qquad \text{ for all } (i,j)\in J^2 \\
x_i^0 = y_i^0 \qquad \text{ for all } i\in \elts \\
m_i^0 = p_i^0 \qquad \text{ for all } i\in J^c \\
\sum_{i\in J} m_i^0 = \sum_{i\in J} p_i^0,
\end{cases}
\end{equation*}
the solutions $t\mapsto (x(t),m(t))$ and $t\mapsto (y(t),p(t))$ to system \eqref{eq:syst-microgen} with respective initial conditions
$(x^0, m^0)$ and $(y^0, p^0)$ satisfy for all $t\geq 0$,
\begin{equation*}
\begin{cases}
x_i(t) = y_i(t) = x_j(t) = y_j(t) \qquad \text{ for all } (i,j)\in J^2 \\
x_i(t) = y_i(t) \qquad \text{ for all } i\in \elts \\
m_i(t) = p_i(t) \qquad \text{ for all } i\in  J^c \\
\sum_{i\in J} m_i(t) = \sum_{i\in J} p_i(t).
\end{cases}
\end{equation*}
\end{definition}
Whereas the general system \eqref{eq:syst-microgen} does not necessarily satisfy this property, one easily proves that system \eqref{eq:syst-micro} does satisfy indistinguishability (see \cite{AyiPouradierDuteil20} for the detailed proof).

\section{Notations and distances}

Let $\PR$ denote the set of probability measures of $\R^d$, $\PcR$ the set of probability measures with compact support, $\MR$ the set of (positive) Borel measures with finite mass, and $\MsR$ the set of signed Radon measures. Let $\B(\R^d)$ denote the family of Borel subsets of $\R^d$.

From here onward, $C(E)$ (respectively $C(E;F)$) will denote the set of continuous functions of $E$ (resp. from $E$ to $F$), $C^\Lip(E)$ (respectively $C^\Lip(E;F)$) the set of Lipschitz functions, and $\Cc$ (respectively $\Cc(E;F)$) the set of  functions with compact support. The Lipschitz norm of a function $f\in C^\Lip(E;F)$ is defined by
\[
\|f\|_\Lip := \sup_{x,y\in E,x\neq y}\frac{d_F(f(x)-f(y))}{d_E(x-y)}.
\]

For all $\mu\in \MR$, we will denote by $|\mu|:=\mu(\R^d)$ the total mass of $\mu$. 

For all $\mu\in\MsR$, let $\mu_+$ and $\mu_-$ respectively denote the upper and lower variations of $\mu$, defined by 
$\mu_+(E):=\sup_{A\subset E} \mu(A)$ and $\mu_-(E):=- \inf_{A\subset E} \mu(A)$ for all $E\in\B(\R^d)$, so that $\mu = \mu_+ -\mu_-$. 
We will denote by $\mutv$ the total variation of $\mu$ defined by $\mutv:= \mu_+(\R^d)+\mu_-(\R^d)$. 

\subsection{Generalized Wasserstein and Bounded Lipschitz distances}

We begin by giving a brief reminder on the various distances that will be used throughout this paper. 
The natural distance to study the transport of the measure $\mu_t$ by the non-local vector field $V[\mu_t]$ is the $p$-Wasserstein distance $W_p$, defined for probability measures with bounded $p$-moment $\P_p(\R^d)$ (see \cite{Villani08}):
\[
\forall \mu,\nu \in \P_p(\R^d), \quad W_p(\mu,\nu) := \left(\inf\limits_{\pi\in\Pi(\mu,\nu)}\int_{\R^d\times\R^d} |x-y|^p d\pi(x,y)\right)^{1/p},
\]
where $\Pi$ is the set of transference plans with marginals $\mu$ and $\nu$, defined  by
\[
\Pi(\mu,\nu) = \{\pi\in\P(\R^d\times\R^d)\,;\, \forall A,B\in\B(\R^d), \, \pi(A\times\R^d) = \mu(A) \text{ and } \pi(\R^d\times B) = \nu(B)\}.
\]
In the particular case $p=1$, there is an equivalent definition of $W_1$ by the Kantorovich-Rubinstein duality : 
\[
\forall \mu,\nu \in \P_p(\R^d), \quad W_1(\mu,\nu)=\sup\left\{ \int_{\R^d} f(x) d(\mu(x)-\nu(x)); \quad f\in C^{0,\Lip}_c(\R^d), \|f\|_{\Lip}\leq 1\right\}.
\] 

The Wasserstein distance was extended in \cite{PiccoliRossi14, PiccoliRossi16} to the set of positive Radon measures with possibly different masses. For $a,b>0$, the generalized Wasserstein distance $W_p^{a,b}$ is defined by: 
\[
\forall \mu,\nu\in \mathcal{M}_p(\R^d), \quad W_p^{a,b}(\mu,\nu) = \left(\inf_{\tilde \mu, \tilde \nu \in M(\R^d), |\tilde \mu| = |\tilde\nu|} 
a^p(|\mu-\tilde \mu| + |\nu-\tilde \nu|)^p + b^p W_p^p(\tilde\mu,\tilde\nu)\right)^{1/p}
\]
where $\mathcal{M}_p(\R^d)$ denotes the set of positive Radon measures with bounded $p$-moment.
\begin{rem}\label{rem:noneq}
Observe that the classical and the generalized Wasserstein distances do not generally coincide on the set of probability measures. 
Indeed, the Wasserstein distance between $\mu$ and $\nu$ represents the cost of transporting $\mu$ to $\nu$, and is inextricably linked to the distance between their supports. The generalized Wasserstein distance, on the other hand, allows one to choose between \emph{transporting} $\mu$ to $\nu$ (with a cost proportional to $b$) and \emph{creating or removing mass} from $\mu$ or $\nu$ (with a cost proportional to $a$). Taking for instance $\mu=\delta_{x_1}$ and $\nu = \delta_{x_2}$, the Wasserstein distance $W_p(\delta_{x_1},\delta_{x_2})=d(x_1,x_2)$ increases linearly with the distance between the centers of mass of $\mu$ and $\nu$.
However, one can easily see that 
\[
\Woo(\delta_{x_1},\delta_{x_2}) = \inf_{0\leq \varepsilon\leq 1} 
(|\delta_{x_1}-\varepsilon\delta_{x_1}| + |\delta_{x_2}-\varepsilon\delta_{x_2}| + \varepsilon W_p(\delta_{x_1},\delta_{x_2})) = \inf_{0\leq \varepsilon\leq 1} 
(2(1-\varepsilon) + \varepsilon d(x_1,x_2))
\]
from which it holds $\Woo(\delta_{x_1},\delta_{x_2}) = \min(d(x_1,x_2),2)$.
\end{rem}

More generally, if $\mu,\nu\in\P_p(\R^d)$, taking $\tilde{\mu}=\mu$ and $\tilde{\nu}=\nu$ in the definition of $W_p^{a,b}$ yields
\[
\quad W_p^{a,b}(\mu,\nu) \leq b W_p(\mu,\nu).
\]
On the other hand, taking $\tilde{\mu}=\tilde{\nu}=0$ yields 
\[
\quad W_p^{a,b}(\mu,\nu) \leq a (|\mu|+|\nu|).
\]
In particular, for $a=b=1$, the generalized Wasserstein distance $\Woo$ also satisfies a duality property and coincides with the \emph{Bounded Lipschitz Distance} $\rho(\mu,\nu)$ (see \cite{Dudley02}): for all $\mu,\nu\in \MR,$
\[
\Woo(\mu,\nu) = \rho(\mu,\nu) := \sup\left\{ \int_{\R^d} f(x) d(\mu(x)-\nu(x)); \; f\in C^{0,\Lip}_c(\R^d), \|f\|_{\Lip}\leq 1, \|f\|_{L^\infty}\leq 1\right\}.
\]

In turn, this Generalized Wasserstein distance was extended in \cite{PiccoliRossiTournus20} to the space $\mathcal{M}^s_1(\R^d)$ of signed measures with finite mass and bounded first moment as follows:
\[
\forall \mu, \nu\in \mathcal{M}^s_1(\R^d), \quad \mathbb{W}^{a,b}_1(\mu,\nu)  = W^{a,b}_1(\mu_+ + \nu_-, \mu_- +\nu_+ )  
\]
where $\mu_+, \mu_-, \nu_+ $ and $\nu_-$ are any measures in $\mathcal{M}(\R^d)$ such that $\mu = \mu_+ - \mu_-$ and $\nu = \nu_+ - \nu_-$.
We draw attention to the fact that for positive measures, the two generalized Wasserstein distances coincide: 
\[
\forall \mu, \nu\in \mathcal{M}_1(\R^d), \quad \mathbb{W}^{a,b}_1(\mu,\nu) = W^{a,b}_1(\mu,\nu).
\]

Again, for $a=b=1$, the duality formula holds and the Generalized Wasserstein distance $\WWoo$ is equal to the Bounded Lipschitz distance $\rho$: 
\[
\forall \mu, \nu\in \mathcal{M}^s_1(\R^d), \quad \mathbb{W}^{1,1}_1(\mu,\nu)  = \rho(\mu,\nu).
\]

From here onward, we will denote by $\rho(\mu,\nu)$ the Bounded Lipschitz distance, equal to the generalized Wasserstein distances $\Woo$ on $\MR$ and $\WWoo$ on $\MsR$. 
The properties of the Generalized Wasserstein distance mentioned above give us the following estimate, that will prove useful later on: 
\begin{equation}\label{eq:estirho}
\rho(\mu, \nu)\leq |\mu|+|\nu|.
\end{equation}

We recall other properties of the Generalized Wasserstein distance proven in \cite{PiccoliRossiTournus20} (Lemma 18 and Lemma 33). Although they hold for any $\mathbb{W}_1^{a,b}$, we write them here in the particular case $\WWoo=\rho$:

\begin{prop}\label{prop:sumproperties}
Let $\mu_1$, $\mu_2$, $\nu_1$, $\nu_2$ in $\MsR$ with finite mass on $\R^d$. The following properties hold: 
\begin{itemize}
\item $\rho(\mu_1 + \nu_1,\mu_2 + \nu_1) = \rho(\mu_1 ,\mu_2 )$
\item $\rho(\mu_1 + \nu_1,\mu_2 + \nu_2) \leq \rho(\mu_1 ,\mu_2 ) + \rho(\nu_1 ,\nu_2 )$
\end{itemize} 
\end{prop}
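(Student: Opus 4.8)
The plan is to prove both properties directly from the Kantorovich--Rubinstein-type duality formula that identifies $\rho$ with $\WWoo$ on $\MsR$, namely
\[
\rho(\mu,\nu) = \sup\left\{ \int_{\R^d} f \, d(\mu-\nu) \;;\; f\in C^{0,\Lip}_c(\R^d),\ \|f\|_{\Lip}\leq 1,\ \|f\|_{L^\infty}\leq 1\right\}.
\]
Since all three distances appearing in the statement involve exactly the same admissible class of test functions $f$, the entire argument reduces to manipulating the linear functional $f\mapsto\int f\,d(\cdot)$ inside the supremum, with no recourse to optimal transference plans or decompositions.

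For the first identity, I would simply observe that for any admissible $f$,
\[
\int_{\R^d} f \, d\big((\mu_1+\nu_1)-(\mu_2+\nu_1)\big) = \int_{\R^d} f \, d(\mu_1-\mu_2),
\]
because the contribution of $\nu_1$ cancels by linearity of the integral. Taking the supremum over $f$ on both sides yields $\rho(\mu_1+\nu_1,\mu_2+\nu_1)=\rho(\mu_1,\mu_2)$ as an exact equality.

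For the triangle-type inequality, I would split the integrand for any admissible $f$ as
\[
\int_{\R^d} f \, d\big((\mu_1+\nu_1)-(\mu_2+\nu_2)\big) = \int_{\R^d} f \, d(\mu_1-\mu_2) + \int_{\R^d} f \, d(\nu_1-\nu_2),
\]
and then bound each term by the corresponding supremum over the same class. Since the supremum of a sum is at most the sum of the suprema, this gives
\[
\int_{\R^d} f \, d\big((\mu_1+\nu_1)-(\mu_2+\nu_2)\big) \leq \rho(\mu_1,\mu_2) + \rho(\nu_1,\nu_2),
\]
and taking the supremum over $f$ on the left-hand side concludes.

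There is no genuine obstacle here: the only point requiring care is that this duality representation is available precisely because we are in the case $a=b=1$, where $\WWoo$ coincides with $\rho$. For a general $\WWab$ one could not pass through test functions and would instead have to argue directly on the infimum over decompositions $(\tilde\mu,\tilde\nu)$ entering the definition of the generalized Wasserstein distance, which is the more laborious route taken in \cite{PiccoliRossiTournus20}. Restricting to $\rho$ as in the present statement makes the duality-based proof immediate.
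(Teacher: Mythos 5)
Your proof is correct. Note, however, that the paper does not prove Proposition \ref{prop:sumproperties} at all: it simply recalls the two properties from Lemmas 18 and 33 of the cited reference, where they are established for the general distance $\mathbb{W}^{a,b}_1$ by working directly with the infimum over decompositions $(\tilde\mu,\tilde\nu)$ and over transference plans. Your route is genuinely different and, for the special case at hand, much lighter: once the identification $\rho=\mathbb{W}^{1,1}_1$ with the dual (Bounded Lipschitz) formulation is granted --- which the paper has already recorded --- both claims follow from the linearity of $f\mapsto\int f\,d(\cdot)$ in the measure argument, exactly as you write. The cancellation of $\nu_1$ gives the first point as an exact equality, and the termwise bound $\int f\,d(\mu_1-\mu_2)+\int f\,d(\nu_1-\nu_2)\leq\rho(\mu_1,\mu_2)+\rho(\nu_1,\nu_2)$ for every admissible $f$ gives the second. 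What the decomposition-based proof of the reference buys is generality (it covers all $a,b>0$, where no duality formula is available); what your argument buys is immediacy and transparency in the only case the paper actually uses. Your closing remark correctly identifies this trade-off.
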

The following proposition, proven in \cite{PiccoliRossiTournus20}, holds for any $\mathbb{W}_1^{a,b}$. Again, for simplicity, we state it for the particular case of the distance $\rho$.
Note that to simplify notations and to differentiate from function norms, all vector norms for elements of $\R^d$, $d\geq 1$, will be written $|\cdot|$. The difference with the mass or total variation of a measure will be clear from context.
\begin{prop}\label{prop:flowproperties}
Let $v_1,v_2\in C([0,T]\times\R^d)$ be two vector fields, both satisfying for all $t\in [0,T]$ and $x,y\in\R^d$ the properties
$$
|v_i(t,x)-v_i(t,y)|\leq L |x-y|, \quad |v_i(t,x)|\leq M
$$
where $i\in\{1,2\}$. Let $\mu,\nu\in\MsR$. Let $\Phi_t^{v_i}$ denote the flow of $v_i$, that is the unique solution to 
\[
\frac{d}{dt}\Phi_t^{v_i}(x) = v_i(t,\Phi_t^{v_i}(x)); \qquad \Phi_0^{v_i}(x) = x.
\]
Then
\begin{itemize}
\item $\rho(\Phi_t^{v_1}\# \mu,\Phi_t^{v_1}\# \nu) \leq e^{Lt} \rho(\mu,\nu)$
\item $ \rho(\mu,\Phi_t^{v_1}\#\mu) \leq t M |\mu|$
\item $ \rho(\Phi_t^{v_1}\# \mu,\Phi_t^{v_2}\# \mu) \leq |\mu| \frac{e^{Lt}-1}{L} \|v_1-v_2\|_{L^\infty(0,T;\C^0)}$
\item $ \rho(\Phi_t^{v_1}\# \mu,\Phi_t^{v_2}\# \nu) \leq e^{Lt} \rho(\mu,\nu) + \min\{|\mu|,|\nu|\} \frac{e^{Lt}-1}{L} \|v_1-v_2\|_{L^\infty(0,T;\C^0)}$.
\end{itemize}
\end{prop}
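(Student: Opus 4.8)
The plan is to prove all four estimates through the Kantorovich--Rubinstein--type duality formula recalled above for $\rho$, namely $\rho(\alpha,\beta)=\sup\{\int_{\R^d} f\, d(\alpha-\beta):\ f\in C^{0,\Lip}_c(\R^d),\ \|f\|_\Lip\le 1,\ \|f\|_{L^\infty}\le 1\}$, combined with the change-of-variables identity $\int f\, d(\Phi_t^{v}\#\alpha)=\int f\circ\Phi_t^{v}\, d\alpha$ (valid for signed $\alpha$ by splitting into positive and negative parts) and two elementary Gronwall estimates on the flows. The two flow facts I will need are: (a) each flow map $\Phi_t^{v_i}$ is Lipschitz with $\|\Phi_t^{v_i}\|_\Lip\le e^{Lt}$, obtained by differentiating $|\Phi_t^{v_i}(x)-\Phi_t^{v_i}(y)|$ and applying the $L$-Lipschitz bound on $v_i$ together with Gronwall; and (b) $|x-\Phi_t^{v_i}(x)|=|\int_0^t v_i(s,\Phi_s^{v_i}(x))\,ds|\le tM$, from the uniform bound on $v_i$.

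For the first estimate, fix an admissible test function $f$ and set $\tilde f:=e^{-Lt} f\circ\Phi_t^{v_1}$. By fact (a), $\|\tilde f\|_\Lip\le 1$, and since $\|f\|_{L^\infty}\le1$ we also have $\|\tilde f\|_{L^\infty}\le e^{-Lt}\le1$, so $\tilde f$ is itself admissible. Then $\int f\, d(\Phi_t^{v_1}\#\mu-\Phi_t^{v_1}\#\nu)=\int f\circ\Phi_t^{v_1}\, d(\mu-\nu)=e^{Lt}\int \tilde f\, d(\mu-\nu)\le e^{Lt}\rho(\mu,\nu)$, and taking the supremum over $f$ gives the claim. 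For the second estimate, for admissible $f$ I write $\int f\, d(\mu-\Phi_t^{v_1}\#\mu)=\int (f-f\circ\Phi_t^{v_1})\, d\mu$; using $\|f\|_\Lip\le1$ and fact (b), $|f(x)-f(\Phi_t^{v_1}(x))|\le |x-\Phi_t^{v_1}(x)|\le tM$ pointwise, whence the integral is bounded by $tM|\mu|$.

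The core estimate is the third one, which rests on controlling the separation of the two flows. Setting $g(t):=|\Phi_t^{v_1}(x)-\Phi_t^{v_2}(x)|$, I differentiate and split $v_1(t,\Phi_t^{v_1}(x))-v_2(t,\Phi_t^{v_2}(x))$ as $[v_1(t,\Phi_t^{v_1}(x))-v_1(t,\Phi_t^{v_2}(x))]+[v_1(t,\Phi_t^{v_2}(x))-v_2(t,\Phi_t^{v_2}(x))]$, bounding the first bracket by $L\,g(t)$ and the second by $\|v_1-v_2\|_{L^\infty(0,T;\C^0)}$. With $g(0)=0$, Gronwall yields $g(t)\le \frac{e^{Lt}-1}{L}\|v_1-v_2\|_{L^\infty(0,T;\C^0)}$. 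For admissible $f$, $\int f\, d(\Phi_t^{v_1}\#\mu-\Phi_t^{v_2}\#\mu)=\int(f\circ\Phi_t^{v_1}-f\circ\Phi_t^{v_2})\, d\mu$, and $|f\circ\Phi_t^{v_1}(x)-f\circ\Phi_t^{v_2}(x)|\le g(t)$ uniformly in $x$; integrating gives the bound $|\mu|\frac{e^{Lt}-1}{L}\|v_1-v_2\|_{L^\infty(0,T;\C^0)}$. Finally, the fourth estimate follows from the triangle inequality for $\rho$ applied to the intermediate measure $\Phi_t^{v_1}\#\nu$ (using estimates one and three), and, symmetrically, to $\Phi_t^{v_2}\#\mu$; the two resulting bounds carry $|\nu|$ and $|\mu|$ respectively, and keeping the smaller one produces the factor $\min\{|\mu|,|\nu|\}$.

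I expect the only delicate point to be the bookkeeping for signed measures: the duality formula and the change-of-variables identity must be justified on $\MsR$ (via the decomposition $\mu=\mu_+-\mu_-$ and the identity $\rho(\mu,\nu)=\Woo(\mu_++\nu_-,\mu_-+\nu_+)$ recalled above), and the constants $|\mu|$, $|\nu|$ should then be read as total variations, coinciding with the total mass on the positive measures to which the proposition is ultimately applied. The analytic content --- the Lipschitz-in-$t$ and uniform bounds on the flows --- is entirely standard Gronwall, so no genuine obstacle arises there.
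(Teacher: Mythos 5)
Your argument is correct, but note that the paper itself gives no proof of this proposition: it is quoted verbatim from \cite{PiccoliRossiTournus20} (see also \cite{PiccoliRossi14}), where the estimates are established for the general distances $\mathbb{W}_1^{a,b}$ working directly with the primal definition (optimal transference plans plus mass creation/removal). Your route is genuinely different: you exploit the Kantorovich--Rubinstein duality available for $\rho=\mathbb{W}_1^{1,1}$ and reduce everything to pushing test functions through the flow, which is cleaner and fully self-contained, at the price of only covering the case $a=b=1$ --- which is all the proposition as stated requires. Two small points deserve a sentence in a polished write-up: (i) the admissibility of $\tilde f=e^{-Lt}f\circ\Phi_t^{v_1}$ uses that $\Phi_t^{v_1}$ is a homeomorphism of $\R^d$ (flow of a globally bounded, globally Lipschitz field), so that $f\circ\Phi_t^{v_1}$ retains compact support; (ii) $g(t)=|\Phi_t^{v_1}(x)-\Phi_t^{v_2}(x)|$ need not be classically differentiable where the two trajectories meet, so the Gronwall step should be run in integral form, $g(t)\le\int_0^t\bigl(Lg(s)+\|v_1-v_2\|_{L^\infty(0,T;\C^0)}\bigr)ds$. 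Neither is a gap. Your closing remark about reading $|\mu|$ as total variation on $\MsR$ is exactly the right bookkeeping and matches the paper's notational conventions.
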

The notation $\#$ used above denotes the push-forward, defined as follows: for $\mu\in\MsR$ and $\phi:\R^d\rightarrow\R^d$ a Borel map, the push-forward $\phi\#\mu$ is the measure on $\R^d$ defined by $\phi\#\mu(E):=\mu(\phi^{-1}(E))$, for any Borel set $E\subset\R^d$.


We end this section with a result of completeness that will prove central in the subsequent sections.
As remarked in \cite{PiccoliRossiTournus20}, $(\MsR,\mathbb{W}^{ab}_p)$ is not a Banach space. However, $(\MR,\Wabp)$ is (as shown in \cite{PiccoliRossi16}), and we can also show the following: 

\begin{prop}\label{prop:completeness}
$\PR$ is complete with respect to the Generalized Wasserstein distance $\Wabp$.
\end{prop}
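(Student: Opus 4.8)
The plan is to realize $\PR$ as a closed subset of the complete metric space $(\MR,\Wabp)$, whose completeness is taken as given (\cite{PiccoliRossi16}). Since a subset of a complete metric space is itself complete precisely when it is closed, it suffices to show that $\PR$ is closed in $(\MR,\Wabp)$. Concretely, I would take a sequence $(\mu_n)_n$ in $\PR$ converging in $\Wabp$ to some $\mu\in\MR$, and prove that the limit $\mu$ is again a probability measure. As $\mu$ is automatically a positive measure of finite mass (being an element of $\MR$), the only thing left to check is that its total mass equals $1$.

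The key step is therefore a lower bound showing that the total mass is continuous --- indeed Lipschitz --- with respect to $\Wabp$. Precisely, I claim that for all $\mu,\nu\in\MR$,
\[
\Wabp(\mu,\nu)\ \ge\ a\,\big||\mu|-|\nu|\big|.
\]
To see this, let $\tilde\mu,\tilde\nu$ be any competitors in the definition of $\Wabp$, with common mass $|\tilde\mu|=|\tilde\nu|=:s$. By the reverse triangle inequality for the mass (resp. total variation), $|\mu-\tilde\mu|\ge\big||\mu|-s\big|$ and $|\nu-\tilde\nu|\ge\big||\nu|-s\big|$, so that
\[
|\mu-\tilde\mu|+|\nu-\tilde\nu|\ \ge\ \big||\mu|-s\big|+\big||\nu|-s\big|\ \ge\ \big||\mu|-|\nu|\big|.
\]
Dropping the nonnegative term $b^p W_p^p(\tilde\mu,\tilde\nu)$ and taking the infimum over all admissible $\tilde\mu,\tilde\nu$ then yields the claimed inequality after extracting the $p$-th root.

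With this estimate in hand the conclusion is immediate: applying it to $\mu_n$ and $\mu$ gives $\big||\mu_n|-|\mu|\big|\le a^{-1}\Wabp(\mu_n,\mu)\to 0$, and since $|\mu_n|=1$ for every $n$, we obtain $|\mu|=1$. Hence $\mu\in\PR$, proving that $\PR$ is closed and therefore complete. Equivalently, one may phrase the argument directly: a $\Wabp$-Cauchy sequence in $\PR$ is Cauchy in $\MR$, hence converges to some $\mu\in\MR$ by completeness of the latter, and the mass continuity forces $|\mu|=1$.

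The only genuinely non-routine point is the lower bound $\Wabp(\mu,\nu)\ge a\big||\mu|-|\nu|\big|$; everything else reduces to the completeness of $(\MR,\Wabp)$ and the elementary fact that closed subsets inherit completeness. I expect the main care to be needed in justifying $|\mu-\tilde\mu|\ge\big||\mu|-s\big|$ according to the precise admissibility convention for $\tilde\mu$ (if $\tilde\mu\le\mu$ is imposed then $|\mu-\tilde\mu|=|\mu|-s$ directly, otherwise one invokes the reverse triangle inequality for the total variation norm), but in either convention the inequality holds.
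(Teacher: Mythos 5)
Your proof is correct, but it takes a genuinely different route from the paper's. The paper works directly on a $\Wabp$-Cauchy sequence $\{\mu_n\}\subset\PR$: it invokes the tightness of such sequences (from the proof of Prop.~4 in \cite{PiccoliRossi16}), extracts via Prokhorov's theorem a weakly convergent subsequence whose limit is automatically a probability measure, upgrades weak convergence to $\Wabp$-convergence using Theorem~3 of \cite{PiccoliRossi16}, and concludes from the Cauchy property. You instead take the completeness of $(\MR,\Wabp)$ as the black box (which the paper itself quotes just before the proposition) and reduce the claim to the closedness of $\PR$ in $\MR$, obtained from the quantitative estimate $\Wabp(\mu,\nu)\geq a\,\big||\mu|-|\nu|\big|$. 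That estimate is correctly derived: with the paper's definition the infimum runs over arbitrary $\tilde\mu,\tilde\nu\in\MR$ with $|\tilde\mu|=|\tilde\nu|$, not over minorants of $\mu$ and $\nu$, so it is indeed the reverse triangle inequality for the total variation norm that is needed, exactly as you anticipate. Your route avoids Prokhorov's theorem and any explicit tightness argument, at the price of relying on the completeness of the larger space, whose proof in \cite{PiccoliRossi16} uses essentially the same compactness machinery; in exchange, the Lipschitz continuity of the total mass with respect to $\Wabp$ is a clean and reusable by-product that the paper's argument leaves implicit.
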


\begin{proof}
Let $\{\mun\}\subset\PR$ be a Cauchy sequence with respect to $\Wabp$. It was proven in the proof of Prop. 4 in \cite{PiccoliRossi16} that $\{\mun\}$ is tight.
From Prokhorov's theorem, there exists $\mus\in\PR$ and a subsequence $\{\muunk\}$ of $\{\mun\}$ such that $\muunk\rightharpoonup_{k\rightarrow\infty } \mus$. 
From Theorem 3 of \cite{PiccoliRossi16}, this implies that $\Wabp(\muunk,\mus) \rightarrow 0$.
From the Cauchy property of $\{\mun\}$ and the triangular inequality, this in turn implies that $\Wabp(\mun,\mus)\rightarrow 0$.
\end{proof}

In particular, note that  $\PR$ is also complete with respect to the Bounded Lipschitz distance $\rho$.

\subsection{Comparison between the distances}\label{Sec:Compa}

From the definition of the Bounded-Lipschitz distance as a particular case of the Generalized Wasserstein distance $\WWoo$ (for $a=b=1$), we have the following property: 
\begin{equation}\label{eq:BLleqWass}
\forall \mu,\nu\in \PR, \qquad \rho(\mu,\nu)\leq W_1(\mu,\nu).
\end{equation} 

As pointed out in Remark \ref{rem:noneq}, the converse is not true in general.
However, we can show that for measures with bounded support, one can indeed control the $1-$Wasserstein distance with the Bounded Lipschitz one.

\begin{prop}\label{Prop:WassleqBL}
Let $R>0$. For all $\mu,\nu\in \PcR$, if $\supp(\mu)\cup\supp(\nu)\subset B(0,R)$, 
it holds
\[
\rho(\mu,\nu)\leq W_1(\mu,\nu) \leq C_R \rho(\mu,\nu)
\]
where $C_R=\max(1,R)$.
\end{prop}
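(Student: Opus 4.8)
The plan is to prove the two inequalities separately. The left-hand inequality $\rho(\mu,\nu) \le W_1(\mu,\nu)$ is exactly \eqref{eq:BLleqWass}, already recorded, so nothing remains to be done there. For the right-hand inequality $W_1(\mu,\nu) \le C_R\,\rho(\mu,\nu)$, I would work entirely on the dual side, exploiting that the only difference between the Kantorovich--Rubinstein formula for $W_1$ and the duality formula for $\rho$ is the extra uniform bound $\|f\|_{L^\infty} \le 1$ imposed on the test functions of $\rho$. The idea is that a $1$-Lipschitz test function, once recentered and rescaled, automatically satisfies this extra bound on $B(0,R)$, at the cost of a factor $\max(1,R)$.

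Concretely, fix $f \in C^{0,\Lip}_c(\R^d)$ with $\|f\|_\Lip \le 1$, an arbitrary competitor in the dual formula for $W_1$. Since $\mu$ and $\nu$ are probability measures, $\mu-\nu$ has zero total mass, so $\int_{\R^d} f\, d(\mu-\nu)$ is unchanged if I replace $f$ by $f - f(0)$; after this shift I may assume $f(0)=0$, whence $|f(x)| = |f(x)-f(0)| \le |x| \le R$ for every $x \in B(0,R) \supseteq \supp(\mu)\cup\supp(\nu)$. Setting $g := f/C_R$ with $C_R = \max(1,R)$ gives $\|g\|_\Lip \le 1/C_R \le 1$ and $|g|\le R/C_R \le 1$ on $B(0,R)$; thus $g$ satisfies, on the support of the measures, exactly the two constraints required of an admissible test function for $\rho$, and $\int_{\R^d} g\, d(\mu-\nu) = C_R^{-1}\int_{\R^d} f\, d(\mu-\nu)$.

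It then remains to turn $g$ into a genuinely admissible test function for $\rho$, i.e.\ one that is globally bounded by $1$, globally $1$-Lipschitz, and compactly supported. I would first cap its values by $\bar g := \max(-1,\min(1,g))$, which is still $1$-Lipschitz, satisfies $|\bar g|\le 1$ everywhere, and coincides with $g$ on $B(0,R)$. Then I would localize by multiplying by a cutoff $\chi_n$ equal to $1$ on $B(0,R)$, supported in a large ball, and chosen so that $\|\chi_n\|_\Lip \to 0$ as $n\to\infty$; the product $g_n := \bar g\,\chi_n$ is compactly supported, bounded by $1$, and satisfies $\|g_n\|_\Lip \le 1 + \|\chi_n\|_\Lip$. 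Because $\mu$ and $\nu$ are supported in $B(0,R)$, where $\chi_n \equiv 1$, we have $\int_{\R^d} g_n\, d(\mu-\nu) = \int_{\R^d} g\, d(\mu-\nu)$, so $g_n/(1+\|\chi_n\|_\Lip)$ is admissible for $\rho$ and yields $\rho(\mu,\nu) \ge (1+\|\chi_n\|_\Lip)^{-1} C_R^{-1} \int_{\R^d} f\, d(\mu-\nu)$. Letting $n\to\infty$ and then taking the supremum over all such $f$ gives $W_1(\mu,\nu) \le C_R\,\rho(\mu,\nu)$.

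I expect the genuine obstacle to be precisely this last regularization step. A naive localization by a fixed $1$-Lipschitz bump would inflate the Lipschitz constant to $2$ and destroy the sharp constant $C_R$, so the crucial point is that the cutoff can be performed with asymptotically vanishing Lipschitz cost (spread over a ball of radius $\sim n$), which is legitimate only because the integral against the zero-mass, compactly supported measure $\mu-\nu$ sees nothing outside $B(0,R)$. By contrast, the value-capping step and the recentering via zero total mass are routine, though both are needed for the admissibility bookkeeping.
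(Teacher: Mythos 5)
Your proposal is correct and follows essentially the same route as the paper's proof: both work on the dual side, recenter the test function at $0$ using the zero total mass of $\mu-\nu$, bound $|f|\le R$ on $B(0,R)$ via the Lipschitz condition, and rescale by $C_R=\max(1,R)$ to obtain an admissible competitor for $\rho$. The only difference is that you treat the extension outside $B(0,R)$ more carefully (truncation plus a cutoff of vanishing Lipschitz cost and a limit in $n$), where the paper simply asserts that a compactly supported $1$-Lipschitz prolongation exists; your version is, if anything, the more rigorous of the two.
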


\begin{proof}
Let $\mu,\nu\in \PcR$, such that $\supp(\mu)\cup\supp(\nu)\subset B(0,R)$. Let
\[
A:=\left\{ \int_{\R^d} f d(\mu-\nu); \; f\in C^{0,\Lip}_c(\R^d), \|f\|_{\Lip}\leq 1, \|f\|_{L^\infty}\leq 1\right\}
\]
\[ \text{and }\quad B:=\left\{ \int_{\R^d}f d(\mu-\nu); \; f\in C^{0,\Lip}_c(\R^d), \|f\|_{\Lip}\leq 1\right\}.
\]
Then $\rho(\mu,\nu) = \sup_{A} a$ and $W(\mu,\nu) = \sup_{B} b$.
It is clear that $A\subset B$, which proves the first inequality.\\
Let 
\[ \tilde B=\left\{ \int_{\R^d}f d(\mu-\nu); \; f\in C^{0,\Lip}_c(\R^d), \|f\|_{\Lip}\leq 1, f(0)=0\right\}.
\]
Clearly, $\tilde B \subset B$. Let us show that $B\subset \tilde{B}$.
Let $b\in B$. There exists $f_b\in C^{0,\Lip}_c(\R^d)$ such that $\|f_b\|_{\Lip}\leq 1$ and $b=\int_{\R^d}f_b d(\mu-\nu)$.
Let us define $\tilde{f_b}\in C(\R^d)$ such that for all $x\in B(0,R)$, $\tilde{f_b}(x) = f_b(x)-f_b(0)$. It holds $\|\tilde{f_b}\|_{\Lip(B(0,R))}\leq 1$. We prolong $\tilde{f_b}$ outside of $B(0,R)$ in such a way that $\tilde{f_b}\in C^{0,\Lip}_c(\R^d)$ $\arg\max(\tilde{f_b})\in B(0,R)$ and $\|\tilde{f_b}\|_{\Lip(\R^d)}\leq 1$.
Then since the supports of $\mu$ and $\nu$ are contained in $B(0,R)$, 
\[
\int_{\R^d}\tilde{f_b} \,d(\mu-\nu) = \int_{B(0,R)}\tilde{f_b}\, d(\mu-\nu) = \int_{B(0,R)}f_b\, d(\mu-\nu) - f(0) \int_{B(0,R)} d(\mu-\nu) = b
\]
where the last equality comes from the fact that $\mu(B(0,R))=\nu(B(0,R))=1$. This proves that $b\in \tilde{B}$, and so $B=\tilde{B}$.

Let us now show that there exists $a\in A$ such that $b\leq \max(1,R) a$.\\
If $\|\tilde{f_b}\|_{L^\infty(\R^d)}\leq 1$, then $b\in A$. \\
If $\|\tilde{f_b}\|_{L^\infty(\R^d)} > 1$, let $f_a := \tilde{f_b}/\|\tilde{f_b}\|_{L^\infty(\R^d)}$. It holds $\|f_a\|_{L^\infty(\R^d)}\leq 1$ and $\|f_a\|_{\Lip}\leq 1$. Thus $a:=\int_{\R^d} f_a \, d(\mu-\nu)  \in A$ and it holds
\[
b = \|\tilde{f_b}\|_{L^\infty(\R^d)}\int_{\R^d} \tilde{f_b}/\|\tilde{f_b}\|_{L^\infty(\R^d)} d(\mu-\nu) \leq  \|\tilde{f_b}\|_{L^\infty(\R^d)} a.
\]
Since $\tilde{f_b}(0) = 0$ and $\|\tilde{f_b}\|_{\Lip}\leq 1$, it holds $\|\tilde{f_b}\|_{L^\infty(B(0,R))}\leq R$, hence $\|\tilde{f_b}\|_{L^\infty(\R^d)}\leq R$.
We then have: 
\[
\forall b\in B, \quad \exists a\in A \quad \text{ s.t. } \quad b\leq \max(1,R) a,
\]
which implies that $\sup_B b\leq \max(1,R) \sup_A a$.
\end{proof}

It is a well-known property of the Wasserstein distances that for all $m\leq p$, for all $\mu,\nu\in\P_p(\R^d)$, 
\begin{equation}\label{eq:WasspleqWassq}
W_m(\mu,\nu) \leq W_p(\mu,\nu).
\end{equation}
The proof of this result is a simple application of the Jensen inequality \cite{Villani08}.

The converse is false in general. However, once again, we can prove more for measures with compact support in the case $m=1$.
\begin{prop}\label{Prop:WassqleqWass1}
Let $R>0$ and $p\in\N^*$. For all $\mu,\nu\in \PcR$, if $\supp(\mu)\cup\supp(\nu)\subset B(0,R)$,
\begin{equation*}
W_p(\mu,\nu) \leq (2R)^{\frac{p-1}{p}} W_1(\mu,\nu)^{\frac{1}{p}}.
\end{equation*}
\end{prop}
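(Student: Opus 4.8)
The plan is to produce a single admissible transference plan that is good for both $W_1$ and $W_p$ simultaneously, exploiting the fact that the compact support confines all transport distances to at most $2R$. First I would fix an optimal transference plan $\pi\in\Pi(\mu,\nu)$ realizing $W_1(\mu,\nu)$; such a plan exists by the standard compactness argument for the Wasserstein distance on $\P_1(\R^d)$ (see \cite{Villani08}), and since $\mu,\nu\in\PcR$ have compact support, all $p$-moments are finite, so $W_p(\mu,\nu)$ is well defined. Because the marginals of $\pi$ are $\mu$ and $\nu$, whose supports lie in $B(0,R)$, the plan $\pi$ is concentrated on $B(0,R)\times B(0,R)$; hence $|x-y|\leq 2R$ for $\pi$-almost every $(x,y)$.

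The key pointwise estimate is then the elementary bound $|x-y|^p = |x-y|^{p-1}\,|x-y| \leq (2R)^{p-1}\,|x-y|$, valid $\pi$-almost everywhere. Integrating against $\pi$ and using that $\pi$ is an admissible (though not necessarily optimal) plan for $W_p$, so that $W_p^p(\mu,\nu)\leq \int_{\R^d\times\R^d}|x-y|^p\,d\pi(x,y)$ by definition of $W_p$ as an infimum, I would obtain
\[
W_p^p(\mu,\nu) \leq \int_{\R^d\times\R^d} |x-y|^p\, d\pi(x,y) \leq (2R)^{p-1}\int_{\R^d\times\R^d}|x-y|\,d\pi(x,y) = (2R)^{p-1}\, W_1(\mu,\nu),
\]
where the final equality uses that $\pi$ was chosen optimal for $W_1$. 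Taking the $p$-th root yields exactly $W_p(\mu,\nu)\leq (2R)^{\frac{p-1}{p}} W_1(\mu,\nu)^{\frac1p}$.

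There is no serious obstacle here: the only point requiring a moment of care is the direction of the inequality $W_p^p \leq \int |x-y|^p\,d\pi$, which holds precisely because $W_p$ is defined as the infimum over all transference plans and $\pi$ is one admissible competitor (optimality of $\pi$ is only needed, and only used, for the $W_1$ integral). The compact-support hypothesis enters solely to convert the factor $|x-y|^{p-1}$ into the uniform constant $(2R)^{p-1}$; without it the bound would fail since $W_p$ can be arbitrarily larger than $W_1$. I would note in passing that this is the natural companion to \eqref{eq:WasspleqWassq}, giving a reverse control of $W_p$ by $W_1$ on sets of measures with uniformly bounded support.
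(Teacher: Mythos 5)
Your proof is correct and follows essentially the same route as the paper: take a transference plan $\pi$ concentrated on $B(0,R)\times B(0,R)$, use the pointwise bound $|x-y|^p\leq (2R)^{p-1}|x-y|$, and conclude via the definition of $W_p$ as an infimum. Your explicit choice of a $W_1$-optimal plan and the remark on which side needs optimality simply make precise the paper's closing phrase ``from which we deduce the claimed property.''
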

\begin{proof}
Let $\pi\in\Pi(\mu,\nu)$ be a transference plan with marginals $\mu$ and $\nu$. Since the supports of $\mu$ and $\nu$ are contained in $B(0,R)$, the support of $\pi$ is contained in $B(0,R)\times B(0,R)$. We can then write:
\[
\int_{\R^d\times\R^d} d(x,y)^p d\pi(x,y) = \int_{B(0,R)^2} d(x,y)^p d\pi(x,y)  \leq (2R)^{p-1} \int_{B(0,R)^2} d(x,y) d\pi(x,y)
\]
from which we deduce the claimed property.
\end{proof}

\section{Macroscopic Model}

In this section, we give a meaning to the non-linear and non-local transport equation with source: 
\begin{equation}\label{eq:transportsource}
\partial_t \mt(x) + \nabla\cdot \left(V[\mu_t](x) \mt(x)\right) = h[\mt](x), \qquad \mu_{t=0} = \muo,
\end{equation}
where the non-local vector field $V$ and source term $h$ are defined as follows:
\begin{itemize}
\item 
Let $\ba\in \Lip(\R^d;\R^d)$ satisfy Hyp. \ref{hyp:abar}.
The vector field $V\in C^{0,\Lip}(\MR; C^{0,\Lip}(\R^d))$ is defined by:
\begin{equation}\label{eq:Vdef}
\forall \mu\in \MR, \quad \forall x\in\R^d, \quad V[\mu](x) = \int_{\R^d} \ba(x-y) d\mu(y).
\end{equation}
\item Let $S\in C^0((\R^d)^{q+1}; \R)$ satisfy Hyp. \ref{hyp:S}.
The source term $h\in C^{0,\Lip}(\MR; \MsR)$ is then defined as:
\begin{equation}\label{eq:hdef}
\forall \mu\in \MR, \quad \forall x\in\R^d, \quad h[\mu](x) = \left(\int_{(\R^d)^q} S(x,y_1,\cdots,y_q) d\mu(y_1)\cdots d\mu(y_q)\right) \mu(x).
\end{equation}
\end{itemize}

The solution to \eqref{eq:transportsource} will be understood in the following weak sense:
\begin{definition}\label{Def:weaksol}
A measure-valued weak solution to \eqref{eq:transportsource} is a map $\mu\in C^0([0,T],\MsR)$ such that $\mu_{t=0} = \muo$ and for all $f\in C_c^\infty(\R^d)$,
\begin{equation}\label{eq:transportsourceweak}
\frac{d}{dt}\int_{\R^d} f(x) d\mu_t(x) = \int_{\R^d} V[\mu_t]\cdot \nabla f(x) d\mu_t(x) + \int_{\R^d}f(x) dh[\mu_t](x).
\end{equation}
\end{definition}

\begin{rem}
This model is a modified version of the one proposed in \cite{PiccoliRossi18}.
The form of the source term \eqref{eq:hdef} is slightly more general than the one of \cite{PiccoliRossi18} (where $h$ was defined as $h[\mu](x) = \left( S_1 + S_2\star\mu\right) \mu$). However we also introduce a more restrictive condition \eqref{eq:condS} that will force the source term to be a signed measure with zero total mass.
\end{rem}

The first aim of this paper will be to prove the following
\begin{theorem}\label{Th:existenceuniqueness}
Let $\muo\in\PcR$. There exists a unique weak solution to \eqref{eq:transportsource} in the space $C^0([0,T],\PcR)$. 
\end{theorem}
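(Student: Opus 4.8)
The plan is to construct approximate solutions by an operator-splitting scheme that decouples the non-local transport from the source term, and then to show that these approximations converge, as the time step vanishes, to the unique weak solution. Fix $n\in\N$, set $\dt=T/n$, and on each subinterval $[n\dt,(n+1)\dt]$ perform first a \emph{transport half-step} — pushing the current measure forward by the flow of the velocity field $V[\cdot]$ of \eqref{eq:Vdef}, yielding the intermediate measure $\mu_{(n+\frac12)\dt}$ — and then a \emph{source half-step}, in which the mass is redistributed through $h[\cdot]$ of \eqref{eq:hdef}, yielding $\mu_{(n+1)\dt}$. Since both $V$ and $h$ depend on the unknown measure, each substep is defined only implicitly; I would resolve this with an inner fixed-point iteration (the index $k$ in the notation $\munk$, $\mundk$, $\munkk$), freezing the arguments of $V$ and $h$ at iteration $k$ to produce iteration $k+1$, and checking that for $\dt$ small this iteration is a contraction in $\rho$, so that it converges to a well-defined one-step map. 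Note the analogy with the microscopic construction: Prop.~\ref{Prop:m} plays at the particle level the role that the estimates below play at the measure level.

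The core of the argument is a set of a priori estimates ensuring that every iterate remains a nonnegative measure with support in a fixed ball, uniformly in $n$. Positivity is immediate from the structure of the scheme: the transport step is a push-forward, and the source step multiplies the density by the \emph{positive} factor $\exp(\dt\,r[\mu](x))$ with $r[\mu](x)=\int_{(\R^d)^q} S(x,y_1,\cdots,y_q)\,d\mu(y_1)\cdots d\mu(y_q)$, so nonnegativity is preserved at each substep. For the mass, the continuous-in-time source dynamics $\partial_t\mu=h[\mu]$ is mass-conserving because $\int_{\R^d}dh[\mu]=\int_{(\R^d)^{q+1}}S(x_0,\cdots,x_q)\,d\mu(x_0)\cdots d\mu(x_q)=0$: the skew-symmetry \eqref{eq:condS} makes the integrand antisymmetric in two variables while the product measure $\mu^{\otimes(q+1)}$ is symmetric. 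Since $\int r[\mu]\,d\mu=0$, the frozen source step incurs only an $O(\dt^2)$ mass error per step, hence an $O(\dt)$ error over $[0,T]$ that vanishes in the limit, so the limiting measure has mass one. Finally, because $\ba$ is Lipschitz (Hyp.~\ref{hyp:abar}), $V[\mu]$ is bounded on $\supp\mu$ by $|\ba(0)|+L_\phi\,\mathrm{diam}(\supp\mu)$, so the support expands at controlled speed and stays inside a fixed ball $B(0,R_T)$ for all $t\in[0,T]$; on this ball $V$ is uniformly Lipschitz and bounded, and by Hyp.~\ref{hyp:S} the source integrand is bounded by $\bS$ and Lipschitz with constant $L_S$.

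With these uniform confinements, I would prove that the piecewise-defined approximations $t\mapsto\mu^n_t$ are Cauchy in $C^0([0,T];\rho)$. The needed ingredients are: the Lipschitz dependence of $V$ and $h$ on $\mu$ in $\rho$ — obtained from the duality of $\rho=\WWoo$ with bounded Lipschitz test functions, together with Prop.~\ref{Prop:WassleqBL} to control the relevant supports, giving local Lipschitz constants depending only on $R_T$, $L_\phi$, $\bS$, $L_S$ and $q$; the flow estimates of Prop.~\ref{prop:flowproperties} to compare successive push-forwards; and the subadditivity of $\rho$ from Prop.~\ref{prop:sumproperties} to handle the additive source contributions. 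A Gronwall argument then bounds the distance between the step-$\dt$ and step-$\dt/2$ approximations by $O(\dt)$, establishing the Cauchy property. By completeness (Prop.~\ref{prop:completeness} for $\PcR$, with the support confinement) the approximations converge to some $\mu\in C^0([0,T];\PcR)$, and passing to the limit in the one-step identity tested against $f\in C_c^\infty(\R^d)$ yields exactly the weak formulation \eqref{eq:transportsourceweak}. Uniqueness follows directly: two weak solutions with the same datum share a common support ball, so the same local Lipschitz bounds apply, and subtracting their weak formulations and invoking Prop.~\ref{prop:flowproperties} and Prop.~\ref{prop:sumproperties} gives $\frac{d}{dt}\rho(\mu_t,\nu_t)\le C\,\rho(\mu_t,\nu_t)$, whence $\rho(\mu_t,\nu_t)=0$ by Gronwall.

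The main obstacle, in my view, is the high-order nonlinearity of the source. Because $h$ is of degree $q+1$ in $\mu$, its Lipschitz constant in $\rho$ is not absolute but grows with the mass and the support, so the confinement of every iterate to a \emph{fixed} ball of (near-)probability measures is not a technicality but the precise mechanism that makes all Lipschitz constants uniform and lets the Gronwall estimates close. The delicate accompanying point is to control positivity and mass at the level of the \emph{frozen} splitting scheme rather than only for the exact equation — the zero-total-mass identity must be shown to propagate, up to a vanishing error, through the discrete construction so that the limit is genuinely a probability measure, as claimed.
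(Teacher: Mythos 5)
Your construction of the solution follows essentially the same route as the paper: an operator-splitting scheme alternating a non-local transport substep and a source substep, uniform-in-$\dt$ confinement (positivity, unit mass, support in a fixed ball $B(0,R_T)$) so that the Lipschitz constants of $V$ and $h$ become uniform, a Cauchy estimate in $\D(\mu,\nu)=\sup_t\rho(\mu_t,\nu_t)$ obtained by comparing consecutive refinements, completeness of $\PR$ for $\rho$, and passage to the limit in the weak formulation. Two design choices differ but are harmless. First, the paper's scheme is fully explicit --- both $V$ and $h$ are frozen at $\mu^k_{n\dt}$ (the superscript $k$ indexes the refinement $\dt=2^{-k}T$, not an inner iteration) --- so no implicit fixed-point sub-iteration is needed; your implicit variant would work but adds an unnecessary layer. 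Second, your multiplicative source substep $d\mu\mapsto e^{\dt\,r[\mu]}d\mu$ preserves positivity unconditionally at the price of an $O(\dt^2)$ mass error per step, whereas the paper's linear step $\mu\mapsto\mu+\dt\,h[\mu]$ conserves mass exactly but needs the step-size restriction $\dt\,\bS\le 1$ (point $(v)$ of Prop.~\ref{Prop:h}) for positivity; either trade-off closes the argument. Your identification of the main difficulty --- that the Lipschitz constant of $h$ is only local, so confinement to compactly supported probability measures is what makes the estimates uniform --- matches the paper's motivation for designing a mass- and positivity-preserving scheme.

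The genuine gap is in the uniqueness step. You assert that subtracting the weak formulations of two solutions and invoking Props.~\ref{prop:flowproperties} and \ref{prop:sumproperties} yields $\frac{d}{dt}\rho(\mu_t,\nu_t)\le C\rho(\mu_t,\nu_t)$. This does not follow as stated: the weak formulation controls $\frac{d}{dt}\int f\,d(\mu_t-\nu_t)$ for each \emph{fixed} test function $f$, while $\rho$ is a supremum over a class of test functions and one cannot differentiate under that supremum; moreover the transport terms produce $\int V[\mu_t]\cdot\nabla f\,d\mu_t-\int V[\nu_t]\cdot\nabla f\,d\nu_t$, in which $\nabla f$ need not be admissible for the duality defining $\rho$. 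The missing ingredient is a quantitative consistency estimate showing that \emph{any} weak solution is approximated over a short time $\tau$ by one step of the splitting,
\[
\rho\bigl(\mu_{t+\tau},\,\Phi_\tau^{V[\mu_t]}\#\mu_t+\tau h[\mu_t]\bigr)\le K\tau^2,
\]
which is estimate \eqref{eq:Ktausquare} in the paper; only with this in hand do Props.~\ref{prop:flowproperties} and \ref{prop:sumproperties} apply to the comparison of the two one-step maps and give $\varepsilon(t+\tau)\le(1+C\tau)\varepsilon(t)+O(\tau^2)$, hence Gronwall. Establishing this $O(\tau^2)$ estimate is itself the delicate part (the paper proves it by induction along the scheme and a limiting argument), and without it the stability inequality --- and hence uniqueness --- is not obtained.
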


Notice that we are almost in the frameworks of \cite{PiccoliRossi14} and \cite{PiccoliRossiTournus20}.
In \cite{PiccoliRossi14}, existence and uniqueness was proven for a transport equation with source of the form \eqref{eq:transportsource}, for measures in $\MR$ and with source term $h\in C^{0,\Lip}(\MR, \MR)$. Since in our case, $h[\mu]$ is a signed measure, we cannot apply directly the theory of \cite{PiccoliRossi14}.
In \cite{PiccoliRossiTournus20}, existence and uniqueness was proven for a transport equation with source of the form \eqref{eq:transportsource}, for measures in $\MsR$ and with source term $h\in C^{0,\Lip}(\MsR, \MsR)$. 
However, as we will see in Section \ref{Sec:PropMacro}, the source term $h$ defined by \eqref{eq:hdef} does not satisfy some of the assumptions required in \cite{PiccoliRossiTournus20}, namely a global Lipschitz property and a global bound on the mass of $h[\mu]$.

\subsection{Properties of the model} \label{Sec:PropMacro}
We now prove that the vector field $V[\mu]$ satisfies Lipschitz and boundedness properties, provided that $\mutv$ is bounded.\\
First, notice that the continuity of $\phi$ implies that for all $R>0$ and $x\in\R^d$ such that $|x|\leq 2 R$, there exists $\phiR>0$ such that $|\phi(x)|\leq \phiR$. More specifically, since $\phi$ is Lipschitz, $\phiR=\phi_0 + 2 L_\phi R$, with $\phi_0:=\phi(0)$.
\begin{prop}\label{Prop:V}
The vector field $V$ defined by \eqref{eq:Vdef} satisfies the following:
\begin{itemize}
\item For all $\mu\in\MsR$ such that $\supp(\mu)\subset B(0,R)$, for all $ x\in B(0,R)$, 
$
|V[\mu](x)| \leq \phiR \, \mutv.
$
\item For all $(x,y)\in \R^{2d}$, for all $\mu\in\MsR$, 
$$
| V[\mu](x)-V[\mu](z) | \leq \La \mutv  \; |x-z|.
$$
\item For all $\mu, \nu\in\MsR$ such that $\supp(\mu)\cup\supp(\nu)\subset B(0,R)$, 
$$
\| V[\mu]-V[\nu] \|_{L^\infty(B(0,R))} \leq (\La+\phi_R) \, \rho(\mu,\nu).
$$
\end{itemize}
\end{prop}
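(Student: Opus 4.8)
The plan is to derive all three estimates from the integral representation \eqref{eq:Vdef}, reducing each one to a pointwise control on $\ba$. The first two are routine. For the boundedness estimate I bound $|V[\mu](x)|\le\int_{\R^d}|\ba(x-y)|\,d|\mu|(y)$; since $\supp(\mu)\subset B(0,R)$ and $x\in B(0,R)$, the integrand is evaluated only where $|x-y|\le 2R$, so $|\ba(x-y)|\le\phiR=\phi_0+2\La R$, and factoring this constant out of the integral gives $|V[\mu](x)|\le\phiR\,\mutv$. For the Lipschitz-in-$x$ estimate I write $V[\mu](x)-V[\mu](z)=\int_{\R^d}(\ba(x-y)-\ba(z-y))\,d\mu(y)$ and apply Hyp.~\ref{hyp:abar} pointwise, $|\ba(x-y)-\ba(z-y)|\le\La|x-z|$, then integrate against $|\mu|$; no support assumption is needed here.

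The third estimate is the heart of the proposition. Fixing $x\in B(0,R)$, I write $V[\mu](x)-V[\nu](x)=\int_{\R^d}\ba(x-y)\,d(\mu-\nu)(y)$ and compute its Euclidean norm by duality against unit vectors, $|V[\mu](x)-V[\nu](x)|=\sup_{|e|=1}\int_{\R^d}e\cdot\ba(x-y)\,d(\mu-\nu)(y)$. For fixed $x$ and $e$, the scalar map $g(y):=e\cdot\ba(x-y)$ satisfies $\|g\|_\Lip\le\La$ by Hyp.~\ref{hyp:abar} and $|g(y)|\le\phiR$ for $y\in B(0,R)$. Since $\supp(\mu)\cup\supp(\nu)\subset B(0,R)$, only the values of $g$ on $B(0,R)$ enter the integral, so I replace $g$ by a function $\tilde g\in C^{0,\Lip}_c(\R^d)$ coinciding with $g$ on $B(0,R)$ and still satisfying $\|\tilde g\|_\Lip\le\La$ and $\|\tilde g\|_{L^\infty}\le\phiR$, through an extension-and-truncation argument like the one carried out in the proof of Prop.~\ref{Prop:WassleqBL}. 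Then $f:=\tilde g/(\La+\phiR)$ is admissible in the duality formula for $\rho$, so that $\int_{\R^d}g\,d(\mu-\nu)=(\La+\phiR)\int_{\R^d}f\,d(\mu-\nu)\le(\La+\phiR)\,\rho(\mu,\nu)$. Taking the supremum over $|e|=1$ and then over $x\in B(0,R)$ yields $\|V[\mu]-V[\nu]\|_{L^\infty(B(0,R))}\le(\La+\phiR)\,\rho(\mu,\nu)$.

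The main obstacle is precisely this last normalization step: admissible test functions for $\rho$ must obey \emph{both} $\|f\|_\Lip\le1$ and $\|f\|_{L^\infty}\le1$, so one cannot feed in $g$ directly but must rescale by $\La+\phiR$, and one must first replace $g$ by a compactly supported Lipschitz function carrying the same two bounds without altering the integral --- which is exactly where the compact-support hypothesis on $\mu$ and $\nu$ is used. Everything else amounts to moving pointwise bounds on $\ba$ through the integral sign.
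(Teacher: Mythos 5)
Your proof is correct and follows essentially the same route as the paper: pointwise bounds on $\ba$ pushed through the integral for the first two items, and for the third the normalization of $y\mapsto\ba(x-y)$ by $(\La+\phiR)$ so as to test it against the duality formula for $\rho$. Your treatment of the third point is in fact slightly more careful than the paper's (explicit reduction to scalar test functions via unit vectors, and explicit extension to a compactly supported admissible function), but these are refinements of the same argument rather than a different one.
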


\begin{proof}
Let $\mu\in\MsR$.
If $\supp(\mu)\in B(0,R)$, for all $x\in B(0,R)$,
$$
|V[\mu](x)|  =\left| \int_{B(0,R)} \phi(y-x) d\mu(y)\right| \leq \left(\sup_{(x,y)\in B(0,R)}|\phi(y-x)|\right)\;\mutv \leq \phiR \;\mutv.
$$
Secondly, for all $(x,z)\in \R^{2d}$,
\begin{equation*}
\begin{split}
|V[\mu](x) - V[\mu](z)| & = \left| \int_{\R^d} (\ba(y-x)-\ba(y-z)) d\mu(y)\right| \leq  \int_{\R^d} |\ba(y-x)-\ba(y-z)| d|\mu|(y)\\ 
& \leq 
\La |x-z| \mutv.
\end{split}
\end{equation*}
Lastly, for all $\mu, \nu\in\MsR$ such that $\supp(\mu)\cup\supp(\nu)\subset B(0,R)$ for all $x\in B(0,R)$,
\begin{equation*}
\begin{split}
| V[\mu](x)-V[\nu](x) |& = \int_{B(0,R)} \ba(y-x) d(\mu(y)-\nu(y)) \\
& \leq (L_\phi+\phi_R) \sup_{f\in \C_c^{0,\Lip}, \|f\|_\Lip\leq 1, \|f\|_\infty\leq 1} \int_{\R^d} f(y)\; d(\mu(y)-\nu(y))\\
& \leq (\La+\phi_R) \, \rho(\mu,\nu)
\end{split}
\end{equation*}
where we used the fact that for all $x\in B(0,R)$, the function $y\mapsto (\La+\phi_R)^{-1} \ba(y-x)$ has both Lipschitz and $L^\infty$ norms bounded by 1, and the definition of $\rho$.
\end{proof}

\begin{prop}\label{Prop:h}
The source term $h$ defined by \eqref{eq:hdef} satisfies the following:
\begin{enumerate}[(i)]
\item $\forall\mu\in\MsR$, $h[\mu](\R^d) = 0$
\item $\forall\mu\in\MsR$, $\supp(h[\mu]) = \supp(\mu)$
\item There exists $L_h$ such that for all $\mu,\nu\in\MsR$ with compact support and with bounded total variation $\mutv\leq Q$ and $\TV{\nu}\leq Q$,
$$
\rho(h[\mu],h[\nu]) \leq L_h \rho(\mu,\nu).
$$
\item $\forall\mu\in\MR$, $\TV{h[\mu]}\leq \bS \; |\mu|^{q+1}$.
\item $\forall \mu \in \MR$, $\forall E\subset \R^d$,  $h[E] \geq -\bS \; |\mu| \; \mu(E)$.
\end{enumerate}
\end{prop}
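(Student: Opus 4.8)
The plan is to exploit throughout the factorized form of the source term. Writing
\[
g_\mu(x) := \int_{(\R^d)^q} S(x,y_1,\dots,y_q)\, d\mu(y_1)\cdots d\mu(y_q),
\]
so that $h[\mu] = g_\mu\,\mu$ is a scalar density $g_\mu$ against $\mu$, I would first record the two elementary bounds that drive everything. Since $|S|\le\bS$ and the $q$-fold product measure $\mu^{\otimes q}$ has total variation $\TV{\mu}^q$, it holds $\|g_\mu\|_{L^\infty}\le\bS\,\TV{\mu}^q$; and since $S$ is $L_S$-Lipschitz in each argument (Hyp.~\ref{hyp:S}), $g_\mu$ is Lipschitz with $\|g_\mu\|_{\Lip}\le L_S\,\TV{\mu}^q$. (For positive $\mu$ the total variation $\TV{\mu}$ is just the mass $|\mu|$.)

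Parts (i) and (ii) are then short. For (i), expanding
\[
h[\mu](\R^d)=\int_{(\R^d)^{q+1}}S(x,y_1,\dots,y_q)\,d\mu(x)\,d\mu(y_1)\cdots d\mu(y_q)
\]
and relabeling the two dummy variables singled out by the skew-symmetry \eqref{eq:condS} shows — by Fubini, legitimate since $S$ is bounded and $\TV{\mu}<\infty$, and since the product measure is symmetric in those two slots — that this integral equals its own opposite and hence vanishes; this is exactly the computation of Proposition~\ref{Prop:m}(i). For (ii), because $h[\mu]=g_\mu\,\mu$ with $g_\mu$ continuous, every Borel set disjoint from $\supp(\mu)$ is $h[\mu]$-null, so $\supp(h[\mu])\subseteq\supp(\mu)$ — the inclusion that guarantees the source creates no mass outside $\supp(\mu)$.

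Parts (iv) and (v) follow by integrating the pointwise bounds on $g_\mu$ against the positive measure $\mu$. For (iv), $\TV{h[\mu]}=\int_{\R^d}|g_\mu|\,d\mu\le\bS\,|\mu|^q\,|\mu|=\bS\,|\mu|^{q+1}$. For (v), the pointwise lower bound $g_\mu(x)\ge-\bS\,|\mu|^q$ integrates to $h[\mu](E)=\int_E g_\mu\,d\mu\ge-\bS\,|\mu|^q\,\mu(E)$, which is the stated inequality (in particular for the probability measures to which it is applied, where $|\mu|=1$).

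The real work is (iii), and it is the main obstacle. Using the duality $\rho=\WWoo$, for an admissible test function $f$ (with $\|f\|_{\Lip}\le1$ and $\|f\|_{L^\infty}\le1$) I would write
\[
\int_{\R^d} f\, d\big(h[\mu]-h[\nu]\big)=\int_{(\R^d)^{q+1}} f(x)\,S(x,y_1,\dots,y_q)\,\big(d\mu^{\otimes(q+1)}-d\nu^{\otimes(q+1)}\big),
\]
and then use the telescoping identity $\mu^{\otimes(q+1)}-\nu^{\otimes(q+1)}=\sum_{k=0}^{q}\mu^{\otimes k}\otimes(\mu-\nu)\otimes\nu^{\otimes(q-k)}$ to split this into $q+1$ terms, each carrying a single factor $(\mu-\nu)$. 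In each term I would integrate out the $q$ remaining variables against $\mu$ or $\nu$ (whose total variations are $\le Q$) to obtain a function of the single free variable; by the bounds above this function has $L^\infty$-norm $\le\bS\,Q^q$ and Lipschitz constant $\le(\bS+L_S)Q^q$, the extra $\bS$ appearing only in the slot carrying $f$, where both $f$ and $S$ vary. Normalizing and invoking the duality for $\rho$ in that slot bounds each term by $(\bS+L_S)Q^q\,\rho(\mu,\nu)$, and summing gives the claim with $L_h=(q+1)(\bS+L_S)Q^q$. The delicate points to handle carefully are that the product measures are signed — so the controlling quantities are total variations, bounded by $Q$ — and that the duality formula requires compactly supported test functions: since $\mu$ and $\nu$ have compact support, one first restricts each partially-integrated function to a fixed ball containing both supports and then extends it (McShane) to a compactly supported Lipschitz function with the same $L^\infty$ and Lipschitz norms, without changing its integral against $\mu-\nu$.
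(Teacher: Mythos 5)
Your proposal is correct and follows essentially the same route as the paper: the same skew-symmetry relabeling for (i), the same telescoping of $\mu^{\otimes(q+1)}-\nu^{\otimes(q+1)}$ into $q+1$ single-difference terms for (iii) with the same normalization against the duality formula and the same constant $L_h=(q+1)(\bS+L_S)Q^q$, and the same pointwise bounds on $g_\mu$ for (iv) and (v); your extra care about extending the partially-integrated test functions to compactly supported Lipschitz functions only makes explicit a step the paper glosses over. The one discrepancy is in (ii), where you prove only the inclusion $\supp(h[\mu])\subseteq\supp(\mu)$ rather than the stated equality — but that inclusion is in fact all that holds in general (and all that is used later), so this is an imprecision in the statement rather than a gap in your argument.
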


\begin{proof}
Let $\mu\in\MsR$. From the definition of $h$, we compute: 
\begin{equation*}
\begin{split}
h[\mu](\R^d) = & \int_{(\R^d)^{q+1}}  S(y_0,y_1,\cdots,y_q) d\mu(y_0)d\mu(y_1)\cdots d\mu(y_q)\\
= & \frac{1}{2}\int_{(\R^d)^{q+1}} S(y_0,\cdots, y_i,\cdots,y_j,\cdots,y_q) d\mu(y_0)\cdots d\mu(y_q) \\
& + \frac{1}{2}\int_{(\R^d)^{q+1}} S(y_0,\cdots, y_j,\cdots,y_i,\cdots,y_q) d\mu(y_0)\cdots d\mu(y_q) 
\end{split}
\end{equation*}
where we used the change of variables $y_i \leftrightarrow y_j$ to write the second term. Then, using the skew-symmetric property \eqref{eq:condS}, we obtain $h[\mu](\R^d)=0$.

The second property is immediate from the definition of $h[\mu]$. 

For the third point, 
let $\mu,\nu\in\MsR$ with compact support, and satisfying $\mutv\leq Q$ and $\TV{\nu}\leq Q$.
For all $f\in \C_c^{0,\Lip}$ such that $\|f\|_\infty\leq 1$ and $\|f\|_\Lip\leq 1$,
%
\begin{equation*}
\begin{split}
\int_{\R^d} f(x)\; &d(h[\mu]-h[\nu]) = 
\int_{\R^d} f(x)\; \int_{\R^{qd}}  S(x,y_1\cdots,y_q) d\mu(y_1)\cdots d\mu(y_q) d\mu(x) \\
& -  \int_{\R^d} f(x)\; \int_{\R^{qd}} S(x,y_1\cdots,y_q) d\nu(y_1)\cdots d\nu(y_q) d\nu(x) \\
= & \int_{\R^d} f(x)\; \int_{\R^{qd}}  S(x,y_1\cdots,y_q) d\mu(y_1)\cdots d\mu(y_q) d(\mu(x)-\nu(x)) \\
& + \sum_{i=1}^q \int_{\R^d} f(x)\; \int_{\R^{qd}}  S(x,y_1\cdots,y_q) d\mu(y_1)\cdots d\mu(y_i) d\nu(y_{i+1})\cdots d\nu(y_q) d\nu(x) \\
& - \sum_{i=1}^q \int_{\R^d} f(x)\; \int_{\R^{qd}}  S(x,y_1\cdots,y_q) d\mu(y_1)\cdots d\mu(y_{i-1}) d\nu(y_{i})\cdots d\nu(y_q) d\nu(x) \\
= & \int_{\R^d} f(x)\; \int_{\R^{qd}}  S(x,y_1\cdots,y_q) d\mu(y_1)\cdots d\mu(y_q) d(\mu(x)-\nu(x)) \\
& + \sum_{i=1}^q \int_{\R^d} f(x)\; \int_{\R^{qd}}  S(x,y_1\cdots,y_q) d\mu(y_1)\cdots d(\mu(y_i)-d\nu(y_{i})) d\nu(y_{i+1})\cdots d\nu(y_q) d\nu(x) \\
:= & \; A(f) + \sum_{i=1}^q B_i(f).
\end{split}
\end{equation*}
We begin by studying the first term $A(f):= \int_{\R^d} f(x)\; \psi(x) d(\mu(x)-\nu(x))$, where $\psi$ is defined by $\psi : x \mapsto  \int_{\R^{qd}}  S(x,y_1\cdots,y_q) d\mu(y_1)\cdots d\mu(y_q)$.
Notice that 
$$|\psi(x)| =  \left|\int_{\R^{qd}}  S(x,y_1\cdots,y_q) d\mu(y_1)\cdots d\mu(y_q)\right| \leq \bS \mutv^q \leq \bS Q^q.$$
Furthermore, for all $(x,z)\in\R^{2d}$, 
$$
|\psi(x)-\psi(z)| = \left|\int_{\R^{qd}}  (S(x,y_1\cdots,y_q)-S(z,y_1\cdots,y_q)) d\mu(y_1)\cdots d\mu(y_q)\right| \leq L_S \mutv^q |x-z|
$$
where we used the Lipschitz property of $S$ \eqref{eq:psiSklip}.
Thus, the function $x\mapsto f(x)\psi(x)$ satisfies: 
\begin{equation*}
\forall x\in\R^d, \quad |f(x)\psi(x)|\leq \bS Q^q.
\end{equation*}
It also satisfies:  for all $(x,z)\in\R^{2d}$,
\begin{equation*}
|f(x)\psi(x)-f(z)\psi(z)| = |f(x)(\psi(x)-\psi(z))+(f(x)-f(z))\psi(z)| \leq (L_S+\bS) Q^q |x-z|.
\end{equation*}
This implies that the function $g:x\mapsto \frac{1}{Q^q(\bS+L_S)}f(x)\psi(x)$ satisfies $g\in\C_c^{0,\Lip}$,
$\|g\|_\infty\leq 1$ and $\|g\|_\Lip\leq 1$.
Then, using the defintion of $\rho$, 
we deduce that
\begin{equation*}
\begin{split}
A(f) & = Q^q(L_S+\bS) \int_{\R^d}g(x) d(\mu(x)-\nu(x))\\
& \leq Q^q(L_S+\bS) \sup_{f\in \C_c^{0,\Lip}, \|f\|_\infty\leq 1, \|f\|_\Lip\leq 1} \int_{\R^d}f(x) d(\mu(x)-\nu(x)) 
= Q^q(L_S+\bS)\, \rho(\mu,\nu).
\end{split}
\end{equation*}

Now, let
$\zeta_i : y_i\mapsto \int_{\R^{qd}} f(x)\; S(x,y_1\cdots,y_q) d\mu(y_1)\cdots d\mu(y_{i-1}) d\nu(y_{i+1})\cdots d\nu(y_q) d\nu(x)$
and $B_i(f) := \int_{\R^d} \zeta_i(y_i) d(\mu(y_i)-d\nu(y_{i}))$.
It holds:
$$
\forall y_i\in\R^d, \quad |\zeta_i(y_i)| \leq \|f\|_{L^\infty} \|S\|_{L^\infty} \mutv^{i-1} \TV{\nu}^{q-i+1} \leq \bS Q^q.
$$
Moreover, for all $(y_i,z_i)\in\R^{2d}$,
\begin{equation*}
\begin{split}
& |\zeta_i(y_i)-\zeta_i(z_i)| \\
 = &\left| \int_{\R^{qd}} f(x)\;  (S(x,y_1,\cdots ,y_q)-S(x,y_1,\cdots ,z_i,\cdots, y_q)) d\mu(y_1)\cdots d\mu(y_{i-1}) d\nu(y_{i+1})\cdots d\nu(y_q) d\nu(x)\right| \\
\leq & \|f\|_{L^\infty} L_S |y_i-z_i| \mutv^{i-1} \TV{\nu}^{q-i+1}  \leq L_S Q^q |y_i-z_i|.
\end{split}
\end{equation*}
Hence, the function $g_i:y_i\mapsto \frac{1}{Q^q(L_s+\bS)}\zeta_i(y_i)$ satisfies $g_i\in\C^{0,\Lip}$,
$\|g_i\|_\infty\leq 1$ and $\|g_i\|_\Lip\leq 1$
\begin{equation*}
\begin{split}
B_i(f) 
\leq Q^q(L_S+\bS) \sup_{f\in \C_c^{0,\Lip}, \|f\|_\infty\leq 1, \|f\|_\Lip\leq 1} \int_{\R^d}f(x) d(\mu(x)-\nu(x)) = Q^q(L_S+\bS) \rho(\mu,\nu),
\end{split}
\end{equation*}
We conclude that for all $f\in \C_c^{0,\Lip}$ such that $\|f\|_\infty\leq 1$ and $\|f\|_\Lip\leq 1$,
\begin{equation*}
\begin{split}
\int_{\R^d} f(x) d(h[\mu](x)-h[\nu](x) ) = A(f) + \sum_{i=1}^q B_i(f) \leq (q+1)Q^q(L_S+\bS) \rho(\mu,\nu),
\end{split}
\end{equation*}
which implies
the desired property.

For the fourth point, let $\mu\in\MsR$. From the definition of $h$ follows immediately: $\TV{h[\mu]} \leq \bS \;|\mu|^{q+1}$.


Lastly, for all $\mu\in\MR$ and $E\subset \R^d$,
$$h[\mu](E) = \int_{E} \int_{\R^{dq}} S(x, y_1,\cdots, y_q) d\mu(y_1)\cdots d\mu(y_q) d\mu(x) \geq - \bS |\mu|^q \mu(E).$$

\end{proof}

\subsection{Numerical Scheme}\label{Sec:numscheme}

In \cite{PiccoliRossiTournus20}, existence of the solution to \eqref{eq:transportsource} was proven by showing that it is the limit of a numerical scheme discretizing time.
It would seem natural to apply directly the results of \cite{PiccoliRossiTournus20} on well-posedness of the equation in $\MsR$. However,  the conditions on the source function $h$ required in \cite{PiccoliRossiTournus20}, namely
\begin{equation}\label{eq:proph}
\|h[\mu]-h[\nu]\| \leq L_h \|\mu - \nu\|, \quad |h[\mu]|\leq P \quad \text{ and } \supp(h[\mu])\subset B_0(R)
\end{equation}
uniformly for all $\mu, \nu \in \MsR$ 
are not satisfied in our setting (since $L_h$ and $P$ depend on $|\mu|$, $|\nu|$, as seen in Proposition \ref{Prop:h}).
Instead, we notice that they do hold uniformly for $\mu, \nu \in \PcR$. 
Hence if the numerical scheme designed in \cite{PiccoliRossiTournus20} preserved mass and positivity, one could hope to adapt the proof by restricting it to probability measures. However, we can show that the scheme of \cite{PiccoliRossiTournus20} preserves neither positivity, nor total variation (see Appendix \ref{Sec:SchemePRT}).

For this reason, in order to prove existence of the solution to \eqref{eq:transportsource}, we design a new operator-splitting numerical scheme that conserves mass and positivity (hence total variation). The inequalities \eqref{eq:proph} will then hold for all solutions of the scheme, which will allow us to prove that it converges (with a technique very close to the techniques of \cite{PiccoliRossi14, PiccoliRossiTournus20}) in the space $C([0,T]),\PR)$ (Section \ref{Sec:numscheme}). It will only remain to prove that the limit of the scheme $\bmu$ is indeed a solution to \eqref{eq:transportsource}, and that this solution is unique (Section \ref{Sec:ExUniq}). 

%

\paragraph{Numerical Scheme $\S$.}
Let $T>0$, $k\in\N$, and let $\dt = \frac{T}{2^k}$. 
Set $\muo^k := \muo$.\\
For all $n\in\N$, 
\begin{itemize}
\item $\forall t\in (n\dt,(n+\frac{1}{2})\dt]$, let $\tau = t-n\dt$, and: 
$$\mutk := \mu_{n\dt}^k+ 2\tau h[\mu_{n\dt}^k].$$
\item $\forall t\in ((n+\frac{1}{2})\dt,(n+1)\dt]$, let $\tau' = t-(n+\frac{1}{2})\dt$, and: 
$$\mutk := \Phi_{2\tau'}^{V[\mu_{n\dt}^k]}\#\mu_{(n+\frac12)\dt}^k.$$
\end{itemize}
A schematic illustration of the scheme $\S$ is provided in Fig. \ref{Fig:Scheme}.
\begin{figure}
\centering
\includegraphics[scale=1.2]{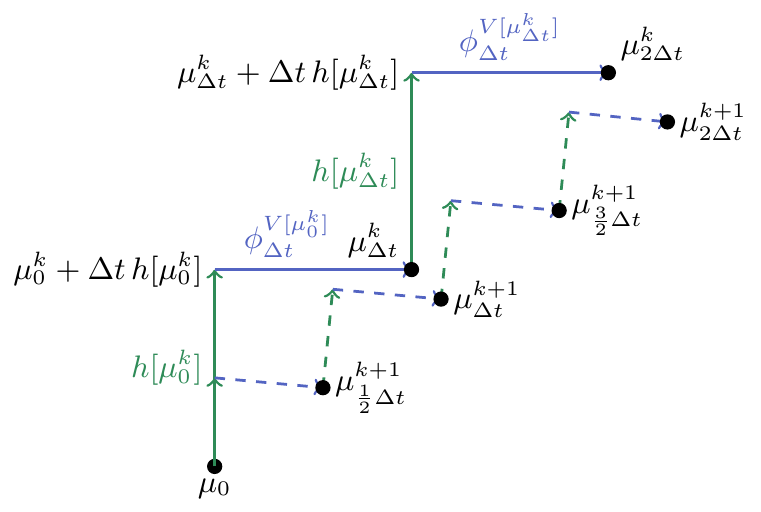}
\caption{Illustration of two steps $k$ (full lines) and $k+1$ (dashed lines) of the operator-splitting numerical scheme $\S$. The source and transport operators are respectively represented by green and blue arrows.}\label{Fig:Scheme}
\end{figure}
As stated above, we begin by proving a key property of the scheme $\S$: it preserves mass and positivity.

\begin{prop}\label{Prop:muproba}
If $\muo\in \PR$, then for all $k\geq \log_2(\bS T)$, for all $t\in [0,T]$, $\mutk \in \PR$.
\end{prop}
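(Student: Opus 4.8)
The plan is to argue by induction on the step index $n$, showing that at each half-step of the scheme $\S$ the two defining properties of a probability measure---total mass equal to $1$ and positivity---are preserved, under the stated constraint on $k$. The base case is immediate since $\muo^k := \muo \in \PR$.

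First I would handle mass conservation, which holds for every $k$. On the source half-step, $\mutk = \munk + 2\tau\, h[\munk]$, and by Proposition \ref{Prop:h}(i) the signed measure $h[\munk]$ has zero total mass, so $\mutk(\R^d) = \munk(\R^d)$. On the transport half-step, $\mutk = \Phi_{2\tau'}^{V[\munk]}\#\mundk$ is a push-forward, which preserves total mass. Hence if $\munk$ has mass $1$, so does $\mutk$ throughout $(n\dt,(n+1)\dt]$; in particular $\mundk$ has mass $1$, which is what justifies applying the transport step to a unit-mass measure.

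Next I would treat positivity. The transport half-step preserves positivity, since the push-forward of a positive measure by a Borel map is again positive. The delicate point is the source half-step. Assuming inductively that $\munk\in\PR$, so that $|\munk| = 1$, I would invoke the one-sided lower bound of Proposition \ref{Prop:h}(v): for every $E\in\B(\R^d)$, $h[\munk](E) \geq -\bS\,|\munk|^q\,\munk(E) = -\bS\,\munk(E)$. Therefore, for $\tau\in(0,\dtt]$,
\[
\mutk(E) = \munk(E) + 2\tau\, h[\munk](E) \geq (1 - 2\tau\bS)\,\munk(E).
\]
Since $\munk(E)\geq 0$, it suffices that $1-2\tau\bS\geq 0$ for all admissible $\tau$, and the worst case $\tau=\dtt$ gives the condition $\dt\,\bS\leq 1$, i.e.\ $\frac{T}{2^k}\bS \leq 1$, which is exactly the hypothesis $k\geq\log_2(\bS T)$. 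This yields $\mundk\in\PR$, and combined with the transport step it closes the induction, giving $\mutk\in\PR$ for all $t\in[0,T]$.

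The main obstacle is precisely the positivity of the source half-step, as this is where the threshold $k\geq\log_2(\bS T)$ is forced: the source term $h[\mu]$ is a signed measure with possibly negative part, so one cannot add $2\tau\, h[\munk]$ for an arbitrarily long time without risking negativity. The argument hinges on two ingredients established earlier---the total-mass conservation (ultimately a consequence of the skew-symmetry \eqref{eq:condS}), which keeps $|\munk|=1$ so that the bound in (v) reads simply $-\bS\,\munk(E)$, and the pointwise lower bound (v) itself---to ensure the CFL-type smallness of $\dt$ suffices.
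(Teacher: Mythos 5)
Your proof is correct and follows essentially the same route as the paper's: induction on the step index, mass conservation via Proposition \ref{Prop:h}(i) on the source half-step and the push-forward on the transport half-step, and positivity via the lower bound of Proposition \ref{Prop:h}(v) combined with the smallness condition $\dt\,\bS\leq 1$ equivalent to $k\geq\log_2(\bS T)$. No gaps.
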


\begin{proof}
Let $\muo\in\PR$.
We first show that $\mutk(\R^d)=1$ for all $k\in\N$ and $t\in [0,T]$.
Suppose that for some $n\in\N$, $\munk(\R^d)=1$.
\begin{itemize}
\item For all $t\in (n\dt,(n+\frac{1}{2})\dt]$, from Proposition \ref{Prop:h},
$$
\mutk(\R^d) = \munk(\R^d)+ 2(t-n\dt) h[\mu_{n\dt}^k](\R^d) =1 + 0 =1.
$$ 
\item For all $t\in ((n+\frac{1}{2})\dt,(n+1)\dt]$,
$$
\mutk (\R^d) := (\Phi_{2(t-(n+\frac{1}{2})\dt)}^{V[\mu_{n\dt}^k]}\#\mu_{(n+\frac12)\dt}^k) (\R^d) = \mu_{(n+\frac12)\dt}^k(\Phi_{-2(t-(n+\frac{1}{2})\dt)}^{V[\mu_{n\dt}^k]}(\R^d)) = \mu_{(n+\frac12)\dt}^k(\R^d) = 1.
$$
\end{itemize}
which proves that $\mutk(\R^d)=1$ for all $t\in [0,T]$ by induction on $n$. We now show that $\mu_t^k\in\MR$ for all $k\in\N$ and $t\in [0,T]$.
Suppose that for some $n\in\N$, for all $E\subset\R^d$, $\munk(E)\geq 0$.
\begin{itemize}
\item For all $t\in (n\dt,(n+\frac{1}{2})\dt]$, for all $E\subset\R^d$, since $k\geq \log_2(\bS T)$,
\begin{equation*}
\begin{split}
\mutk(E) & = \munk(E)+ 2(t-n\dt) h[\mu_{n\dt}^k](E) \geq \munk(E) - \dt \bS \munk(\R^d)^k  \munk(E) \\
& \geq (1- 2^{-k}T \bS)  \munk(E) \geq 0 ,
\end{split}
\end{equation*}
where we used point $(v)$ of Prop. \ref{Prop:h}.
\item For all $t\in ((n+\frac{1}{2})\dt,(n+1)\dt]$, for all $E\subset\R^d$,
$$
\mutk (E) := (\Phi_{2(t-(n+\frac{1}{2})\dt)}^{V[\mu_{n\dt}^k]}\#\mu_{(n+\frac12)\dt}^k) (E)
=\mu_{(n+\frac12)\dt}^k(\Phi_{-2(t-(n+\frac{1}{2})\dt)}^{V[\mu_{n\dt}^k]}(E)) \geq 0
$$
by definition of the push-forward.
\end{itemize}
The result is proven by induction on $n$.
\end{proof}

We also prove another key property of the scheme: it preserves compactness of the support. 
\begin{prop}\label{Prop:suppmu}
Let $\muo\in \PcR$ and $R>0$ such that $\supp(\muo)\subset B(0,R)$. Then there exists $R_T$ independent of $k$ such that for all $t\in[0,T]$, for all $k\in\N$, $\supp(\mutk)\subset B(0,R_T)$.
\end{prop}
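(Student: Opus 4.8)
The plan is to track the radius of the support through the two half-steps of the scheme $\S$ and to accumulate its growth over the $2^k$ time steps, the accumulation being governed by a Gronwall-type estimate whose bound is independent of $k$. Write $R_n^k := \sup\{|x| : x\in\supp(\munk)\}$ for the support radius at time $n\dt$. The first observation is that the \emph{source} half-steps cannot enlarge the support: since $\supp(h[\munk]) = \supp(\munk)$ by Proposition \ref{Prop:h}(ii), and the support of a sum of measures is contained in the union of the supports, for $t\in(n\dt,(n+\frac12)\dt]$ one has $\supp(\mutk) = \supp(\munk + 2\tau\, h[\munk]) \subseteq \supp(\munk)$, and in particular $\supp(\mundk)\subseteq\supp(\munk)$. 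Hence the radius can only increase during the \emph{transport} half-steps, and it suffices to estimate how far the frozen flow $\Phi^{V[\munk]}$ can displace the support over one step.

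For the transport half-step I would first restrict to $k \ge \log_2(\bS T)$, so that Proposition \ref{Prop:muproba} guarantees $\munk\in\PR$ and hence $|\munk| = 1$. Writing $\phi_0 := |\phi(0)|$ and using the Lipschitz bound $|\phi(x-y)| \le \phi_0 + L_\phi|x-y| \le \phi_0 + L_\phi R_n^k + L_\phi|x|$ valid for $y\in\supp(\munk)$ (Hypothesis \ref{hyp:abar}), I obtain the linear-growth estimate
\[
|V[\munk](x)| = \Big|\int_{\R^d}\phi(x-y)\,d\munk(y)\Big| \le \phi_0 + L_\phi R_n^k + L_\phi |x|.
\]
Consequently each characteristic $s\mapsto\Phi_s^{V[\munk]}(x)$ issued from a point $x$ with $|x|\le R_n^k$ satisfies $\frac{d}{ds}|\Phi_s^{V[\munk]}(x)| \le \phi_0 + L_\phi R_n^k + L_\phi|\Phi_s^{V[\munk]}(x)|$, and Gronwall's lemma over the step length $\dt$ (the full transport half-step flows for time $\dt$) yields, after taking the supremum over starting points, the affine recursion
\[
R_{n+1}^k \le (2e^{L_\phi\dt}-1)\,R_n^k + \frac{\phi_0}{L_\phi}\big(e^{L_\phi\dt}-1\big).
\]

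The main obstacle is to make the resulting bound uniform in $k$, since the number of steps $N = 2^k$ grows while the per-step factor $\alpha := 2e^{L_\phi\dt}-1$ shrinks toward $1$. Iterating the recursion gives $R_N^k \le \alpha^N R_0 + \beta\frac{\alpha^N-1}{\alpha-1}$ with $\beta := \frac{\phi_0}{L_\phi}(e^{L_\phi\dt}-1)$. Two facts make this uniform. First, the inhomogeneous part collapses exactly: $\frac{\beta}{\alpha-1} = \frac{\phi_0}{L_\phi}\cdot\frac{e^{L_\phi\dt}-1}{2(e^{L_\phi\dt}-1)} = \frac{\phi_0}{2L_\phi}$, independently of $\dt$. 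Second, using $\ln\alpha \le \alpha-1 = 2(e^{L_\phi\dt}-1)\le 2L_\phi\dt\,e^{L_\phi T}$ (as $\dt\le T$) and $N\dt = T$ gives $\alpha^N = e^{N\ln\alpha}\le e^{2L_\phi T e^{L_\phi T}}$. Combining, $R_n^k \le \big(R + \frac{\phi_0}{2L_\phi}\big)\,\alpha^N \le \big(R + \frac{\phi_0}{2L_\phi}\big)\exp(2L_\phi T e^{L_\phi T}) =: R_T$ for all $n$ and all $k\ge\log_2(\bS T)$, a radius independent of $k$. (If $L_\phi=0$ the velocity is bounded by $\phi_0$, so the support grows at most linearly and one takes $R_T = R + \phi_0 T$.)

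It remains to treat the finitely many indices $k < \log_2(\bS T)$, for which positivity — and hence $|\munk|=1$ — may fail. For each such fixed $k$ the scheme is a finite composition of source half-steps, which preserve compactness of the support as above, and transport half-steps, each a finite-time flow of the globally Lipschitz, at-most-linear-growth field $V[\munk]$, which therefore maps compact sets to compact sets; hence $\sup_{t\in[0,T]}\sup\{|x|:x\in\supp(\mutk)\} < \infty$ for each such $k$. Enlarging $R_T$ to the maximum of its previous value and of these finitely many bounds produces a single radius, independent of $k$, with $\supp(\mutk)\subset B(0,R_T)$ for all $t\in[0,T]$ and all $k\in\N$, which is the claim.
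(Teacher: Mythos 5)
Your proof is correct and follows essentially the same strategy as the paper's: the source half-steps leave the support unchanged, the transport half-steps enlarge the radius by an affine recursion, and the product of the per-step factors over the $2^k$ steps is bounded uniformly in $k$. Your two refinements — deriving the recursion by Gronwall along characteristics (rather than freezing the velocity bound $\phi_{R_{n,k}}$ on a ball the characteristic may exit) and treating separately the finitely many $k<\log_2(\bS T)$ for which positivity, hence $|\mu^k_{n\dt}|=1$, is not yet guaranteed — only sharpen the same argument and cost nothing but a larger (still $k$-independent) constant $R_T$.
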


\begin{proof}
Let $k\in\N$ and suppose that for some $n\in\N$, $\supp(\munk)\subset B(0,\Rnk)$.\\
For all $t\in (n\dt,(n+\frac{1}{2})\dt]$, $\supp(\mutk) = \supp(\munk)\cup\supp(h[\munk]) = \supp(\munk) \subset B(0,\Rnk)$ from point $(ii)$ of Prop. \ref{Prop:h}.\\
For all $t\in ((n+\frac{1}{2})\dt,(n+1)\dt]$, 
$\mutk(x) = \mu_{(n+\frac12)\dt}^k(\Phi_{-2(t-(n+\frac{1}{2})\dt)}^{V[\mu_{n\dt}^k]}(x))$, so from Proposition \ref{Prop:V}, 
$$
\supp(\mutk) \subset B(0,\Rnk + \phi_{\Rnk} \dt) = B(0,\Rnk+ (\phi_0+ 2 L_\phi\Rnk) \dt) = B(0, R_{n+1,k}), 
$$
with $R_{n+1,k}:=\phi_0\dt + \Rnk (1 + 2 L_\phi\dt) $.
By induction, one can prove that 
for $t\in [(n-1)\dt,n\dt]$, $\supp(\mutk) \subset B(0,\Rnk)$, with 
\[
\Rnk= \phi_0\dt \sum_{i=0}^n (1+2\La\dt)^i + R(1+2 L_\phi \dt )^n = (1+2 L_\phi \dt )^n(\frac{\phi_0}{2\La}+R) -\frac{\phi_0}{2\La}.
\]
Since $n\leq 2^k$, for all $n\in \{0,\cdots, 2^k\}$, $\Rnk\leq (1+2 L_\phi T 2^{-k} )^{2^k}(\frac{\phi_0}{2\La}+R) -\frac{\phi_0}{2\La}$.
Moreover, 
$$
\lim_{k\rightarrow\infty} (1+2\La T 2^{-k})^{2^k} = e^{2\La T}, 
$$
so there exists $R_T$ independent of $k$ such that for all $t\in [0,T]$, $\supp(\mutk) \subset B(0,R_T)$.
\end{proof}

Propositions \ref{Prop:muproba} and \ref{Prop:suppmu} allow us to state the main result of this section.
\begin{prop}
Given $V$, $h$ defined by \eqref{eq:Vdef} and \eqref{eq:hdef} and $\muo\in\PcR$, the sequence $\mu^k$ is a Cauchy sequence for the space $(C([0,T], \PR), \D)$, where 
$$\D(\mu, \nu) := \sup_{t\in[0,T]} \rho(\mu_t, \nu_t).$$ 
\end{prop}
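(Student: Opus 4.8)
The plan is to prove a summable one-step estimate of the form $\D(\mu^k,\mu^{k+1}) \leq C\,2^{-k}$, from which the Cauchy property follows by summing a geometric series. Throughout I restrict to $k \geq \log_2(\bS T)$, so that by Propositions \ref{Prop:muproba} and \ref{Prop:suppmu} every iterate $\mutk$ is a probability measure supported in the fixed ball $B(0,R_T)$, uniformly in $k$ and $t$. This uniform control is the linchpin: it renders all constants in Propositions \ref{Prop:V} and \ref{Prop:h} independent of $k$. In particular, since $|\mutk|=1$, the velocity $V[\mutk]$ is bounded by $\phi_{R_T}$ and $L_\phi$-Lipschitz on $B(0,R_T)$, and $h$ satisfies $\rho(h[\mu],h[\nu])\leq L_h\,\rho(\mu,\nu)$ (Proposition \ref{Prop:h}(iii)) with a $k$-independent $L_h$ on probability measures supported in $B(0,R_T)$. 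To legitimately invoke the flow estimates of Proposition \ref{prop:flowproperties}, which require globally bounded and globally Lipschitz velocities, I would first replace $V$ by a global truncation agreeing with it on $B(0,R_T)$; this leaves every iterate unchanged.

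Fix $\dt = T/2^k$ and write $\Sigma_\dt$ for the coarse one-step map $\mu \mapsto \Phi_\dt^{V[\mu]}\#(\mu + \dt\,h[\mu])$, so that $\munnk = \Sigma_\dt\munk$, while over the coarse interval $[n\dt,(n+1)\dt]$ the level-$(k+1)$ scheme applies two sub-steps of length $\dt/2$. Inserting the measure $\Sigma_\dt\munkk$ and using the triangle inequality decomposes the one-step error into a \emph{stability} term and a \emph{consistency} term:
\[
\rho(\munnk,\munnkk) \leq \underbrace{\rho\big(\Sigma_\dt\munk,\,\Sigma_\dt\munkk\big)}_{\text{stability}} + \underbrace{\rho\big(\Sigma_\dt\munkk,\,\munnkk\big)}_{\text{consistency}}.
\]
For the stability term I estimate each half-step: combining the subadditivity of $\rho$ (Proposition \ref{prop:sumproperties}) with the Lipschitz (and homogeneity) property of $h$ gives $\rho(\mu+\dt h[\mu],\nu+\dt h[\nu]) \leq (1+\dt L_h)\rho(\mu,\nu)$ for the source step, and the last bullet of Proposition \ref{prop:flowproperties} together with $\|V[\mu]-V[\nu]\|_{L^\infty(B(0,R_T))}\leq (L_\phi+\phi_{R_T})\rho(\mu,\nu)$ (Proposition \ref{Prop:V}) handles the transport step. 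Composing them yields $\rho(\Sigma_\dt\mu,\Sigma_\dt\nu)\leq (1+C\dt)\rho(\mu,\nu)$ with $C$ depending only on $L_\phi,\phi_{R_T},L_h$.

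The consistency term is the main obstacle: I must show that one coarse step and the composition of two fine steps (source$(\dt/2)$–transport$(\dt/2)$–source$(\dt/2)$–transport$(\dt/2)$) applied to the \emph{same} measure $\munkk$ differ at second order, $\rho(\Sigma_\dt\munkk,\munnkk)\leq C'\dt^2$. The mechanism is that over one step an iterate moves only $O(\dt)$ in $\rho$ — quantified by $|h[\mu]|\leq \bS$ and by $\rho(\mu,\Phi_\tau^{v}\#\mu)\leq \tau\,\phi_{R_T}|\mu|$ (second bullet of Proposition \ref{prop:flowproperties}) — so the frozen source and velocity used on the second fine sub-interval differ from those on the first by $O(\dt)$ in $\rho$, resp. in $L^\infty$, via the Lipschitz-in-measure dependence of $h$ and $V$ (Propositions \ref{Prop:h} and \ref{Prop:V}). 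Multiplying these $O(\dt)$ discrepancies by the sub-step length $\dt/2$ produces the claimed $O(\dt^2)$. Executing this cleanly requires telescoping through the intermediate half-step measures and repeatedly inserting auxiliary push-forwards, so the bookkeeping is heaviest here.

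Finally, since $\rho(\mu_0^k,\mu_0^{k+1})=0$, the recursion $\rho(\munnk,\munnkk)\leq (1+C\dt)\rho(\munk,\munkk)+C'\dt^2$ and the discrete Gronwall lemma give $\sup_n\rho(\munk,\munkk)\leq \tfrac{C'}{C}(e^{CT}-1)\,\dt = O(2^{-k})$ at grid points. For $t$ interior to a step, the same operator bounds ($|h|\leq\bS$ and the displacement bound for $\Phi$) control $\rho(\mutk,\munk)$ and its level-$(k+1)$ analogue by $O(\dt)$, so the triangle inequality upgrades the grid estimate to $\D(\mu^k,\mu^{k+1})\leq C\,2^{-k}$. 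Summing over levels then shows that $\{\mu^k\}$ is Cauchy in $(C([0,T],\PR),\D)$.
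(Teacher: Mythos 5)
Your proof is correct in strategy and reaches the paper's conclusion by the same essential chain: uniform mass and support control from Propositions \ref{Prop:muproba} and \ref{Prop:suppmu} makes all constants $k$-independent, a one-step recursion of the form $\rho(\munnk,\munnkk)\leq(1+C\dt)\rho(\munk,\munkk)+C'\dt^2$ is established, and discrete Gronwall plus interpolation inside each step and geometric summation over $k$ finish the argument. Where you differ is in the packaging of the one-step estimate: you insert the single intermediate measure $\Sigma_\dt\munkk$ (coarse map applied to the fine iterate) and split cleanly into a stability factor $(1+C\dt)$ and a consistency defect $O(\dt^2)$, whereas the paper expands both $\munnk$ and $\munnkk$ into their constituent push-forwards and source additions and runs a three-way triangle inequality in which stability and consistency contributions are interleaved term by term. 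Your decomposition is conceptually tidier and makes the numerical-analysis structure of the argument explicit; the price is that the consistency term, which you correctly identify as the crux and for which you name the right mechanism (the frozen velocity and source on the second fine sub-interval differ by $O(\dt)$ from those on the first, multiplied by the sub-step length $\dt/2$), is only sketched rather than executed — this is precisely the computation that occupies the bulk of the paper's proof, so to make your version self-contained you would still need to carry out the telescoping you defer. One point where you are more careful than the paper: you note that Proposition \ref{prop:flowproperties} requires globally bounded and Lipschitz velocities and propose truncating $V$ outside $B(0,R_T)$, a step the paper leaves implicit.
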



\begin{proof}
Let $k,n\in\N$, with $n\leq 2^k$. Let $\dt = 2^{-k} T$.
Suppose that $\supp(\mu_0)\subset B(0,R)$.
Notice that from Propositions \ref{Prop:V}, \ref{Prop:muproba} and \ref{Prop:suppmu}, we have an $L^\infty$ bound on $V[\mutk]$ independent of $t$ and $k$: for all $x\in B(0,R_T)$, for all $t\in [0,T]$, $|V[\mutk](x)|\leq M_V:= \phi_{R_T}$. 
We also have uniform Lipschitz constants for $V[\cdot]$ and $V[\mu_t^k](\cdot)$. For all $t,s\in [0,T]$, for all $\mu_t^k$, $\mu_s^l$ solutions to $\S$ with initial data $\mu_0$, it holds
\[|V[\mu_t^k](x)-V[\mu_t^k](z)| \leq \La |x-z| \quad
\text{ and } \quad
\|V[\mu_t^k]-V[\mu_s^l]\|_{L^\infty} \leq L_V \rho(\mu_t^k,\mu_s^l)
\]
where $L_V:=L_\phi+\phi_{R_T}$.
We then estimate:
\begin{equation}\label{eq:eststep}
\begin{split}
\rho(\mu^k_{n\dt},\mu^k_{(n+1)\dt}) 
& \leq \rho(\mu^k_{n\dt},\mu^k_{(n+\frac{1}{2})\dt})+\rho(\mu^k_{(n+\frac12)\dt},\mu^k_{(n+1)\dt}) \\
& \leq \rho(\mu^k_{n\dt}, \mu_{n\dt}^k+ \dt \, h[\mu_{n\dt}^k] ) + M_V \dt,
\end{split}
\end{equation}
from Prop. \ref{prop:flowproperties}.
Notice that $\mu^k_{n\dt}\in\PcR$ and $\mu_{n\dt}^k+ \dt \, h[\mu_{n\dt}^k]\in \MsR$. 
\begin{equation*}
\begin{split}
 \rho(\mu^k_{n\dt}, \mu_{n\dt}^k + \dt \, h[\mu_{n\dt}^k] ) &  = \dt \, \rho( 0, h[\mu_{n\dt}^k] ) \leq \dt |h[\mu_{n\dt}^k]| \leq \dt \bS
\end{split}
\end{equation*}
from Equation \eqref{eq:estirho}, Prop. \ref{prop:sumproperties} and Prop.  \ref{Prop:h}.
Thus, coming back to \eqref{eq:eststep}, 
$$
\rho(\mu^k_{n\dt},\mu^k_{(n+1)\dt}) \leq \dt (\bS+M_V).
$$
It follows that for all $p\in\N$ such that $n+p\leq 2^k$, 
\begin{equation*}
\rho(\mu^k_{n\dt},\mu^k_{(n+p)\dt}) \leq p \dt (\bS+M_V).
\end{equation*}
Generalizing for all $t,s\in [0,T]$, $t<s$, there exists $n,p\in\N$ such that $t = n\dt-\tilde t$ and $s=(n+p)\dt+\tilde s$, with $\tilde t, \tilde s\in [0,\dt)$.
Then 
$$
\Woo(\mu^k_{t},\mu^k_{s}) \leq \Woo(\mu^k_{t},\mu^k_{n\dt}) + \Woo(\mu^k_{n\dt},\mu^k_{(n+p)\dt}) + \Woo(\mu^k_{(n+p)\dt},\mu^k_{s}).
$$
If $\tilde t\leq \frac12 \dt$, $\rho(\mu^k_{t},\mu^k_{n\dt})\leq \bS \tilde t$.\\
If $\tilde t\geq \frac12 \dt$, $\rho(\mu^k_{t},\mu^k_{n\dt})\leq \bS \frac{\dt}{2}+ ( \tilde t - \frac{\dt}{2}) M_V \leq  \bS \tilde t +  \tilde t  M_V$.
The same reasoning for $\tilde s$ implies
\begin{equation}\label{eq:eststepts}
\rho(\mu^k_{t},\mu^k_{s}) \leq (\bS+M_V) \tilde t + p(\bS+M_V) + (\bS+M_V) \tilde s = (\bS+M_V) (s- t).
\end{equation}

We also estimate: 
\begin{equation}\label{eq:eststepndn}
\begin{split}
\rho(\mu^{k+1}_{(n+\frac12)\dt},\mu^k_{n\dt})\leq \rho(\mu^{k+1}_{(n+\frac12)\dt},\mu^{k+1}_{n\dt})+\rho(\mu^{k+1}_{n\dt},\mu^k_{n\dt})\leq 
\frac{\dt}{2}(\bS+M_V) + \rho(\mu^{k+1}_{n\dt},\mu^k_{n\dt}).
\end{split}
\end{equation}

We now aim to estimate $\rho(\mu^{k}_{(n+1)\dt},\mu^{k+1}_{(n+1)\dt})$ as a function of $\rho(\mu^{k}_{n\dt},\mu^{k+1}_{n\dt})$.
Let $H_m^j := h[\mu^j_{m\dt}]$ and $\nu_m^j := \Phi_{\dt/2}^{V[\mu^j_{m\dt}]}$.

It holds: 
\begin{equation*}
\begin{split}
\munnk = \Phi^{V[\munk]}_{\dt} \# \left( \munk + \dt\, h[\munk] \right) 
 = \nunk\# \nunk\# \left( \munk  + \dt \Hnk \right)
\end{split}
\end{equation*}
and
\begin{equation*}
\begin{split}
\munnkk & = \Phi^{V[\mundkk]}_{\dt/2} \# \left( \mundkk + \frac{\dt}{2} h[\mundkk] \right) \\
& = \nundkk\# \left( \nunkk \# ( \munkk + \frac{\dt}{2} \Hnkk ) + \frac{\dt}{2} \Hndkk\right) .
\end{split}
\end{equation*}

Hence, 
\begin{equation*}
\begin{split}
\rho(\mu^{k}_{(n+1)\dt},&\mu^{k+1}_{(n+1)\dt})
\leq  \rho(\nunk\# \nunk\# \munk,  \nundkk\# \nunkk \#  \munkk ) \\
& + \frac{\dt}{2} \rho(\nunk\# \nunk\#\Hnk,  \nundkk\# \nunkk \# \Hnkk) 
 + \frac{\dt}{2} \rho(\nunk\# \nunk\#\Hnk, \nundk \# \Hndkk).
\end{split}
\end{equation*}

We study independently the three terms of the inequality. According to Proposition \ref{prop:flowproperties} (see also \cite{PiccoliRossi14} and \cite{PiccoliRossiTournus20}), 
\begin{equation*}
\begin{split}
 & \rho(\nunk\# \nunk\# \munk,  \nundkk\# \nunkk \#  \munkk ) \\
  \leq 
 & e^{\La \frac{\dt}{2}} \rho(\nunk\# \munk, \nunkk \#  \munkk ) + \frac{e^{\La\dtt}-1}{\La} \|V[\munk]-V[\mundkk] \|_{C^0} \\
 \leq 
 & (1+ \La \dt) \rho(\nunk\# \munk, \nunkk \#  \munkk ) + \dt \|V[\munk]-V[\mundkk] \|_{C^0}.  
\end{split}
\end{equation*}
According to Proposition \ref{Prop:V} and equation \eqref{eq:eststepndn},
$$
\|V[\munk]-V[\mundkk] \|_{C^0} \leq L_V \rho(\munk, \mundkk) \leq L_V (\dtt (\bS+M_V) + \rho(\munkk, \munk) ).
$$
Similarly, 
\begin{equation*}
\begin{split}
  \rho(\nunk\# \munk, \nunkk \#  \munkk ) & \leq (1+ \La \dt) \rho(\munk, \munkk) + \dt \|V[\munk]-V[\munkk] \|_{C^0} \\
  & \leq (1+ (\La+L_V) \dt) \rho(\munk, \munkk).
\end{split}
\end{equation*}
Thus we obtain 
\begin{equation*}
\begin{split}
 & \rho(\nunk\# \nunk\# \munk,  \nundkk\# \nunkk \#  \munkk ) \\
\leq & (1+ \La \dt)(1+ (\La+L_V) \dt) \rho(\munk, \munkk) + \dt L_V (\dtt (\bS+M_V) + \rho(\munkk, \munk) ) \\
\leq & (1+ 2( \La+L_V) \dt + \La ( \La+L_V) \dt^2) \rho(\munk, \munkk) + \frac{\La}{2} (\bS+M_V) \dt^2.
\end{split}
\end{equation*}

We treat the second term in a similar way.
\begin{equation*}
\begin{split}
 & \rho(\nunk\# \nunk\#\Hnk,  \nundkk\# \nunkk \# \Hnkk)\\
 \leq & (1+\La\dt) \rho(\nunk\#\Hnk, \nunkk \# \Hnkk) + \dt \|V[\munk]-V[\mundkk] \|_{C^0}.
\end{split}
\end{equation*}
We have: 
\begin{equation*}
\begin{split}
\rho(\nunk\#\Hnk, \nunkk \# \Hnkk) & \leq (1 + \La\dt) \rho(\Hnk, \Hnkk) + \dt  \|V[\munk]-V[\munkk] \|_{C^0} \\
& \leq (1 + \La\dt)L_h \rho(\munk, \munkk) + \dt L_V \rho(\munk, \munkk)\\
& \leq (1 + (\La L_h+L_V)\dt) \rho(\munk, \munkk) .
\end{split}
\end{equation*}
Thus, 
\begin{equation*}
\begin{split}
 & \rho(\nunk\# \nunk\#\Hnk,  \nundkk\# \nunkk \# \Hnkk)\\
 \leq & (1+\La\dt) (1 + (\La L_h+L_V)\dt) \rho(\munk, \munkk) 
  + \dt L_V [\frac{\dt}{2}(2\bS+M_V) + \rho(\mu^{k+1}_{n\dt},\mu^k_{n\dt})] \\
  \leq & (1+ (\La(L_h+1) +2 L_V)\dt + \La(\La L_h+L_V) \dt^2) \rho(\mu^{k+1}_{n\dt},\mu^k_{n\dt}) + \frac{L_V}{2} (\bS+M_V) \dt^2.
\end{split}
\end{equation*}

Lastly, for the third term we have: 
\begin{equation*}
\begin{split}
 \rho(\nunk\# & \nunk\#\Hnk, \nundk \# \Hndkk) \leq 
 (1+\La\dt) \rho( \nunk\#\Hnk, \Hndkk) + \dt \|V[\munk]-V[\mundkk] \|_{C^0}\\
 & \leq  (1+\La\dt) [\rho( \nunk\#\Hnk, \Hnk) + \rho( \Hnk, \Hndkk)] + \dt L_V \rho(\munk, \mundkk) \\
 & \leq (1+\La\dt) [\dtt M_V + L_h \rho(\munk, \mundkk)] + \dt L_V \rho(\munk, \mundkk) \\
& \leq (1+\La\dt) M_V \dtt + (1+ (\La L_h+L_V)\dt)[ \frac{\dt}{2}(\bS+M_V) + \rho(\mu^{k+1}_{n\dt},\mu^k_{n\dt})] \\
& \leq \frac{\dt}{2} (L_S+2M_V) + O(\dt^2) + (1+  (\La L_h+L_V)\dt) \rho(\mu^{k+1}_{n\dt},\mu^k_{n\dt}).
\end{split}
\end{equation*}

Gathering the three terms together, we have the following estimate: 
\begin{equation*}
\rho(\mu^{k}_{(n+1)\dt},\mu^{k+1}_{(n+1)\dt}) \leq (1+C_1\dt) \rho(\mu^{k+1}_{n\dt},\mu^k_{n\dt}) + C_2 \dt^2
\end{equation*}
where
$C_1$ and $C_2$ depend on the constants $\La$, $L_V$, $L_h$, $M_V$ and $\bS$.
Thus, by induction on $n$,
\begin{equation*}
\rho(\mu^{k}_{n\dt},\mu^{k+1}_{n\dt}) \leq C_2\dt^2 \frac{(1+C_1\dt)^n-1}{1+C_1\dt -1} \leq 2n C_2\dt.
\end{equation*}
This allows us to prove the convergence of $\mu_t^k$ for every $t\in [0,T]$. For instance, for $t=T$, i.e. $n = T/\dt$, we have 
\begin{equation*}
\rho(\mu^{k}_{T},\mu^{k+1}_{T}) \leq  2 C_2\dt = 2 T C_2 2^{-k}
\end{equation*}
and for all $l,k\in \N,$
\begin{equation*}
\rho(\mu^{k}_{T},\mu^{k+l}_{T}) \leq  2 C_2\left(\frac{1}{2^k}+\frac{1}{2^{k+1}}+\cdots +\frac{1}{2^{k+l-1}}\right)\leq \frac{4C_2}{2^k}.
\end{equation*}
A similar estimation holds for any $t\in (0,T)$ (see \cite{PiccoliRossi14}).
This proves that the sequence $\mu^k$ is a Cauchy sequence for the space $(C([0,T], \PR), \D)$.
\end{proof}

As an immediate consequence, since $(C([0,T], \PR), \D)$  is complete (see Proposition \ref{prop:completeness}), it follows that there exists $\bmu\in (C([0,T], \PR)$ such that 
\begin{equation*}
\lim_{k\rightarrow\infty} \D(\mu^k,\bmu) = 0.
\end{equation*}%


\subsection{Existence and uniqueness of the solution}\label{Sec:ExUniq}

Let $\bmu_t := \lim_{k\rightarrow\infty} \mu_t^k$ denote the limit of the sequence constructed with the numerical scheme defined in the previous section. We now prove that it is indeed a weak solution of \eqref{eq:transportsource}.
We aim to prove that for all $f\in C_c^\infty((0,T)\times \R^d)$, it holds
$$
\int_0^T \; \left( \int_{\R^d} \; (\partial_t f + V[\bmu_t]\cdot \nabla f) \; d\bmu_t + \int_{\R^d} \; f \; dh[\bmu_t] \right) dt =0.
$$

We begin by proving the following result: 
\begin{lemma}\label{lemma:convergenceweak}
Let $\mu_0\in\PcR$ and let $\mu^k\in  C([0,T], \PcR)$ denote the solution to the numerical scheme $\mathbb{S}$ with initial data $\mu_0$. Let $\dt_k:=2^{-k}T$. For all $f\in C_c^\infty((0,T)\times \R^d)$, it holds: 
$$
\lim_{k\rightarrow\infty} \sum_{n=0}^{2^k-1}\int_{n\dt_k}^{(n+1)\dt_k} \; \left( \int_{\R^d} \; (\partial_t f + V[\munk]\cdot \nabla f) \; d\mu_t^k + \int_{\R^d} \; f \; dh[\munk] \right)dt =0.
$$
\end{lemma}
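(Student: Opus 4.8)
The plan is to use the explicit piecewise structure of the scheme $\mathbb{S}$ to recognise the integrand as an exact total time derivative plus lower-order corrections that vanish in the limit. Set $F^k(t):=\int_{\R^d}f(t,x)\,d\mu_t^k(x)$; this map is continuous on $[0,T]$ and $C^1$ on each half-step, the two branch definitions agreeing at the junctions $t=(n+\tfrac12)\dt_k$ and $t=(n+1)\dt_k$. On a source half-step $t\in(n\dt_k,(n+\tfrac12)\dt_k]$ one has $\mu_t^k=\munk+2(t-n\dt_k)h[\munk]$, and differentiating gives
\[
\tfrac{d}{dt}F^k(t)=\int_{\R^d}\partial_t f\,d\mu_t^k + 2\int_{\R^d}f\,dh[\munk].
\]
On a transport half-step the chain rule together with the flow equation $\tfrac{d}{dt}\Phi_{2\tau'}^{V[\munk]}=2V[\munk]\!\circ\!\Phi_{2\tau'}^{V[\munk]}$ and the change of variables defining the push-forward give
\[
\tfrac{d}{dt}F^k(t)=\int_{\R^d}\partial_t f\,d\mu_t^k + 2\int_{\R^d}V[\munk]\cdot\nabla f\,d\mu_t^k.
\]
In both identities the factor $2$ compensates exactly for each operator being active on only half of the step.

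Using these two identities to substitute for $\int\partial_t f\,d\mu_t^k$, the lemma's integrand equals, on the source half-step, $\tfrac{d}{dt}F^k-\int f\,dh[\munk]+\int V[\munk]\cdot\nabla f\,d\mu_t^k$, and on the transport half-step, $\tfrac{d}{dt}F^k+\int f\,dh[\munk]-\int V[\munk]\cdot\nabla f\,d\mu_t^k$. Integrating over each half-step and summing, the $\tfrac{d}{dt}F^k$ contributions telescope (by continuity of $F^k$) to $F^k(T)-F^k(0)$, which is $0$ since $f$ has compact support in $(0,T)$. Hence the whole sum collapses to the splitting mismatch
\[
\sum_{n=0}^{2^k-1}\left(\int_{n\dt_k}^{(n+\frac12)\dt_k}\big[g_n(t+\tfrac{\dt_k}{2})-g_n(t)\big]\,dt + \int_{n\dt_k}^{(n+\frac12)\dt_k}\big[G_n(t)-G_n(t+\tfrac{\dt_k}{2})\big]\,dt\right),
\]
where $g_n(t):=\int_{\R^d}f(t,x)\,dh[\munk](x)$ and $G_n(t):=\int_{\R^d}V[\munk](x)\cdot\nabla f(t,x)\,d\mu_t^k(x)$, obtained after the substitution $s=t-\tfrac{\dt_k}{2}$ in the transport-half integrals.

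It remains to bound these two residuals. For the $h$-part, since $|g_n'(t)|\le\|\partial_t f\|_\infty\,|h[\munk]|\le\|\partial_t f\|_\infty\,\bS$ by Prop.~\ref{Prop:h}(iv) (with $|\munk|=1$ by Prop.~\ref{Prop:muproba}), each bracket is $O(\dt_k)$ and the $n$-th integral is $O(\dt_k^2)$. For the $G$-part I must control $|G_n(t)-G_n(t')|$ for $|t-t'|\le\tfrac{\dt_k}{2}$; splitting this into the variation of $\nabla f$ in time (bounded by $\|\nabla\partial_t f\|_\infty|t-t'|$, using $|\mu_t^k|=1$) and the variation of the measure, I estimate the latter by the bounded-Lipschitz duality: on the common support ball $B(0,R_T)$ (Prop.~\ref{Prop:suppmu}), the map $x\mapsto V[\munk](x)\cdot\nabla f(t',x)$ is bounded and Lipschitz with constants depending only on $\|f\|_{C^2}$, $R_T$, $\La$ and the uniform bound $M_V=\phi_{R_T}$ on $V$ (Prop.~\ref{Prop:V}), so it contributes at most a constant times $\rho(\mu_t^k,\mu_{t'}^k)\le(\bS+M_V)|t-t'|$ by \eqref{eq:eststepts}. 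Thus $|G_n(t)-G_n(t')|=O(\dt_k)$, making the $n$-th integral $O(\dt_k^2)$ as well. Summing both residuals over the $2^k=T/\dt_k$ steps yields a total of order $\dt_k\to 0$, which proves the lemma.

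The main obstacle is the transport ($G$) residual: unlike $h[\munk]$, the transported measure $\mu_t^k$ genuinely moves across the step, so the integrand is evaluated against a varying measure. The key point making this harmless is that the variation of $\mu_t^k$ is controlled in $\rho$ uniformly in $k$ by the scheme estimate \eqref{eq:eststepts}, and that the test integrand has bounded-Lipschitz constants that are uniform in $k$ thanks to the $k$-independent support bound $R_T$ of Prop.~\ref{Prop:suppmu}; without this uniform support control the constants in the duality estimate could blow up with $k$.
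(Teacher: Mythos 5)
Your proof is correct and follows essentially the same route as the paper's: both exploit that each half-step of $\S$ is an exact weak solution of its sub-equation (so the $\partial_t$ contributions telescope to zero by compact support of $f$ in $(0,T)$), and both reduce the claim to bounding the operator-splitting mismatch by $O(\dt_k^2)$ per step using the uniform constants $M_V$, $\bS$, the $k$-independent support radius $R_T$, and the $\rho$-continuity estimate \eqref{eq:eststepts}. Your packaging via $F^k(t)=\int f\,d\mu_t^k$ and its piecewise derivative is a cleaner presentation of the paper's integrated weak-formulation identities, but it is the same argument.
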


\begin{proof}
Let $k\in\N$ and $\dt:=2^{-k}T$.
From the definition of the numerical scheme, we have
\begin{equation}\label{eq:sumint}
\begin{split}
& \int_{n\dt}^{(n+1)\dt} \; \left( \int_{\R^d} \; (\partial_t f + V[\munk]\cdot \nabla f) \; d\mu_t^k + \int_{\R^d} \; f \; dh[\munk] \right) dt\\
= & 
\int_{n\dt}^{(n+\frac{1}{2})\dt} \; \left( \int_{\R^d} \; (\partial_t f + V[\munk]\cdot \nabla f) \; d(\munk + 2(t-n\dt) h[\munk])  \right) dt \\
& + \int_{(n+\frac{1}{2})\dt}^{(n+1)\dt} \; \left( \int_{\R^d} \; (\partial_t f + V[\munk]\cdot \nabla f) \; d(\Phi^{V[\munk]}_{2(t-(n+\frac{1}{2}))\dt}\#\mundk) \right) dt + \int_{n\dt}^{(n+1)\dt} \; \int_{\R^d} \; f \; dh[\munk] dt \\
= & \int_{n\dt}^{(n+\frac{1}{2})\dt} \; \left( \int_{\R^d} \; \partial_t f  \; d(\munk + 2(t-n\dt) h[\munk])  \right) dt + \int_{n\dt}^{(n+1)\dt} \; \int_{\R^d} \; f \; dh[\munk] dt \\
& + \int_{n\dt}^{(n+\frac{1}{2})\dt} \; \left( \int_{\R^d} \; ( V[\munk]\cdot \nabla f) \; d(\munk + 2(t-n\dt) h[\munk])  \right) dt \\
& + \int_{(n+\frac{1}{2}\dt}^{(n+1)\dt} \; \left( \int_{\R^d} \; (\partial_t f + V[\munk]\cdot \nabla f) \; d(\Phi^{V[\munk]}_{2(t-(n+\frac{1}{2}))\dt}\#\mundk) \right) dt.
\end{split}
\end{equation}

We begin by noticing that $\munk + 2(t-n\dt) h[\munk]$ is a weak solution on $(n\dt, (n+\frac12)\dt)$ to 
$$
\partial_t\nu_t = 2h[\munk], \quad \nu_{n\dt} = \munk, 
$$
so it satisfies: 
\begin{equation}\label{eq:sumint1}
\begin{split}
&\int_{n\dt}^{(n+\frac{1}{2})\dt} \;  \int_{\R^d} \; \partial_t f  \; d(\munk + 2(t-n\dt) h[\munk])   dt \\
= & -2 \int_{n\dt}^{(n+\frac{1}{2})\dt} \;  \int_{\R^d} \; f  \; dh[\munk]   dt + \int_{\R^d} \; f((n+\frac{1}{2})\dt)  \; d\mundk - \int_{\R^d} \; f(n\dt)  \; d\munk.
\end{split}
\end{equation}
We go back to the first term of \eqref{eq:sumint}. Notice that from \eqref{eq:sumint1}, we have
\begin{equation*}
\begin{split}
& \int_{n\dt}^{(n+\frac{1}{2})\dt} \; \left( \int_{\R^d} \; \partial_t f  \; d(\munk + 2(t-n\dt) h[\munk])  \right) dt + \int_{n\dt}^{(n+1)\dt} \; \int_{\R^d} \; f \; dh[\munk] dt \\
= & \int_{(n+\frac{1}{2})\dt}^{(n+1)\dt} \; \int_{\R^d} \; f \; dh[\munk] dt - \int_{n\dt}^{(n+\frac{1}{2})\dt} \;  \int_{\R^d} \; f  \; dh[\munk]   dt\\
& + \int_{\R^d} \; f((n+\frac{1}{2})\dt)  \; d\mundk - \int_{\R^d} \; f(n\dt)  \; d\munk\\
= & \int_{n\dt}^{(n+\frac{1}{2})\dt}  \int_{\R^d} \;(f(t+\frac{\dt}{2}) -f(t)) \; dh[\munk] dt 
 + \int_{\R^d} \; f((n+\frac{1}{2})\dt)  \; d\mundk - \int_{\R^d} \; f(n\dt)  \; d\munk \\
 = & \int_{n\dt}^{(n+\frac{1}{2})\dt}  \int_{\R^d} (\frac{\dt}{2} \partial_t f(t) + O(\dt^2)) \; dh[\munk] dt 
 + \int_{\R^d} \; f((n+\frac{1}{2})\dt)  \; d\mundk - \int_{\R^d} \; f(n\dt)  \; d\munk. 
\end{split}
\end{equation*}

Similarly, 
since $\Phi^{V[\munk]}_{2(t-(n+\frac{1}{2}))\dt}\#\mundk$ is solution at time $\tau=2(t-(n+\frac{1}{2}))\dt$ to 
$$
\partial_\tau\nu_\tau + \nabla \cdot (V[\munk]\nu_\tau) = 0, \quad \nu_0 = \mundk,
$$
it satisfies 
\[
\begin{split}
&\int_0^{\dt} \int_{\R^d} \partial_\tau f(\frac{\tau}{2}+(n+\frac{1}{2})\dt) d(\nu_\tau) d\tau + \int_0^{\dt} \int_{\R^d} \nabla f(\frac{\tau}{2}+(n+\frac{1}{2})\dt)\cdot V[\munk]  d(\nu_\tau) d\tau \\
= & \int_{\R^d} \; f((n+1)\dt) d\nu_{\dt} -  \int_{\R^d} \; f((n+\frac{1}{2})\dt) d\nu_0
\end{split}
\]
After the change of variables $t = \frac{\tau}{2}+(n+\frac{1}{2})\dt$, we obtain
\begin{equation*}
\begin{split}
& \int_{(n+\frac{1}{2})\dt}^{(n+1)\dt} \int_{\R^d} (\partial_t f(t) +2 \nabla f(t) \cdot V[\munk] ) d(\Phi^{V[\munk]}_{2(t-(n+\frac{1}{2}))\dt}\#\mundk) dt \\
= & \int_{\R^d} \; f((n+1)\dt) d\munnk -  \int_{\R^d} \; f((n+\frac{1}{2})\dt) d\mundk .
\end{split}
\end{equation*}

We now use this to evaluate the third term of \eqref{eq:sumint}. We have:
\begin{equation}\label{eq:sumint2}
\begin{split}
& \int_{(n+\frac{1}{2})\dt}^{(n+1)\dt} \int_{\R^d} (\partial_t f + \nabla f \cdot V[\munk] ) d(\Phi^{V[\munk]}_{2(t-(n+\frac{1}{2}))\dt}\#\mundk) dt \\
= & - \int_{(n+\frac{1}{2})\dt}^{(n+1)\dt} \int_{\R^d} \nabla f \cdot V[\munk]  d(\Phi^{V[\munk]}_{2(t-(n+\frac{1}{2}))\dt}\#\mundk) dt \\
& + \int_{\R^d} \; f((n+1)\dt) d\munnk -  \int_{\R^d} \; f((n+\frac{1}{2})\dt) d\mundk .
\end{split}
\end{equation}

Now adding together the second and third terms of \eqref{eq:sumint} and using \eqref{eq:sumint2}, we obtain: 
\begin{equation*}\label{eq:sumint3}
\begin{split}
 & \int_{n\dt}^{(n+\frac{1}{2})\dt} \; \left( \int_{\R^d} \; ( \nabla f \cdot V[\munk]) \; d(\munk + 2(t-n\dt) h[\munk])  \right) dt \\
& + \int_{(n+\frac{1}{2}\dt}^{(n+1)\dt} \; \left( \int_{\R^d} \; (\partial_t f + \nabla f \cdot V[\munk]) \; d(\Phi^{V[\munk]}_{2(t-(n+\frac{1}{2}))\dt}\#\mundk) \right) dt \\
= &  \int_{n\dt}^{(n+\frac{1}{2})\dt} \;  \int_{\R^d} \; \nabla f \cdot V[\munk] \; d\mu_t^k \; dt \\
& - \int_{(n+\frac{1}{2})\dt}^{(n+1)\dt} \int_{\R^d} \nabla f \cdot V[\munk]  d\mu_t^k \; dt + \int_{\R^d} \; f((n+1)\dt) d\munnk -  \int_{\R^d} \; f((n+\frac{1}{2})\dt) d\mundk. \\
\end{split}
\end{equation*}
Now, 
\begin{equation*}\label{eq:sumint4}
\begin{split}
 &  \int_{n\dt}^{(n+\frac{1}{2})\dt} \;  \int_{\R^d} \; \nabla f \cdot V[\munk] \; d\mu_t^k \; dt 
 - \int_{(n+\frac{1}{2})\dt}^{(n+1)\dt} \int_{\R^d} \nabla f \cdot V[\munk]  d\mu_t^k \; dt \\
 = & \int_{n\dt}^{(n+\frac{1}{2})\dt}  \int_{\R^d} \; \nabla f(t) \cdot V[\munk] \; d\mu_t^k \; dt 
 - \int_{n\dt}^{(n+\frac{1}{2})\dt} \int_{\R^d} \nabla f(t+\frac{\dt}{2}) \cdot V[\munk]  d\mu_{t+\frac{\dt}{2}}^k \; dt \\
 = & \int_{n\dt}^{(n+\frac{1}{2})\dt}  \int_{\R^d} \nabla f(t) \cdot V[\munk] \; d(\mu_t^k-\mu_{t+\frac{\dt}{2}}^k)  dt +
 \int_{n\dt}^{(n+\frac{1}{2})\dt} \int_{\R^d} (\nabla f(t) - \nabla f(t+\frac{\dt}{2}) ) \cdot V[\munk]  d\mu_{t+\frac{\dt}{2}}^k  dt \\
 = & \int_{n\dt}^{(n+\frac{1}{2})\dt}  \int_{\R^d} \nabla f(t) \cdot V[\munk] \; d(\mu_t^k-\mu_{(n+\frac{1}{2})\dt}^k)  dt +
 \int_{n\dt}^{(n+\frac{1}{2})\dt}  \int_{\R^d} \nabla f(t) \cdot V[\munk] \; d(\mu_{(n+\frac{1}{2})\dt}^k-\mu_{t+\frac{\dt}{2}}^k)  dt \\
& + \int_{n\dt}^{(n+\frac{1}{2})\dt} \int_{\R^d} (\nabla f(t) - \nabla f(t+\frac{\dt}{2}) ) \cdot V[\munk]  d\mu_{t+\frac{\dt}{2}}^k  dt.
\end{split}
\end{equation*}
The first term gives:
\begin{equation*}
\begin{split}
& \left| \int_{n\dt}^{(n+\frac{1}{2})\dt}  \int_{\R^d} \nabla f(t) \cdot V[\munk] \; d(\mu_t^k-\mu_{(n+\frac{1}{2})\dt}^k)  dt  \right|\\
= & \left| \int_{n\dt}^{(n+\frac{1}{2})\dt}  \int_{\R^d} \nabla f(t) \cdot V[\munk] \; 2((n+\frac{1}{2})\dt-t) dh[\munk]  dt  \right|\\
\leq &  M_V \|\nabla f \|_{L^\infty} 2 \left(\frac{\dt}{2}\right)^2 |h[\munk]| = M_V \bS \|\nabla f \|_{L^\infty} \dt^2.
 \end{split}
\end{equation*}

The second term gives:
\begin{equation*}
\begin{split}
& \left| \int_{n\dt}^{(n+\frac{1}{2})\dt}  \int_{\R^d} \nabla f(t) \cdot V[\munk] \; d(\mu_{(n+\frac{1}{2})\dt}^k-\mu_{t+\frac{\dt}{2}}^k)  dt \right|\\
\leq & \left| \int_{n\dt}^{(n+\frac{1}{2})\dt} L_1  \rho(\mu_{(n+\frac{1}{2})\dt}^k, \mu_{t+\frac{\dt}{2}}^k)  dt \right|
\leq L_1  \int_{n\dt}^{(n+\frac{1}{2})\dt} M_V (t+\frac{\dt}{2} - (n+\frac{1}{2})\dt )  dt \leq L_1 M_V \dt^2
 \end{split}
\end{equation*}
where, denoting by $L_1(t)$ the Lipschitz constant of the function $x\mapsto \nabla f(t,x) \cdot V[\munk](x)$, we define $L_1:=\sup_{t\in (0,T)} L_1(t)$. Notice that it is independent of $n$ and $k$ as seen in Proposition \ref{Prop:V}.
 
 Lastly, the third term gives:
 \begin{equation*}
\begin{split}
& \left| \int_{n\dt}^{(n+\frac{1}{2})\dt} \int_{\R^d} (\nabla f(t) - \nabla f(t+\frac{\dt}{2}) ) \cdot V[\munk]  d\mu_{t+\frac{\dt}{2}}^k  dt \right| \\
 \leq & \int_{n\dt}^{(n+\frac{1}{2})\dt} \int_{\R^d} \frac{\dt}{2} |\partial_t (\nabla f(t))|\, |V[\munk]|  d\mu_{t+\frac{\dt}{2}}^k  dt  \leq M_V \|\partial_t \nabla f\|_{L^\infty} \frac{\dt^2}{4}.
 \end{split}
\end{equation*}

We can finally go back to \eqref{eq:sumint}. 
\begin{equation*}
\begin{split}
& \int_{n\dt}^{(n+1)\dt} \; \left( \int_{\R^d} \; (\partial_t f + V[\munk]\cdot \nabla f) \; d\mu_t^k + \int_{\R^d} \; f \; dh[\munk] \right) dt\\
\leq & \int_{n\dt}^{(n+\frac{1}{2})\dt}  \int_{\R^d} (\frac{\dt}{2} \partial_t f(t) + O(\dt^2)) \; dh[\munk] dt 
 + \int_{\R^d} \; f((n+\frac{1}{2})\dt)  \; d\mundk - \int_{\R^d} \; f(n\dt)  \; d\munk \\
 &  + \int_{\R^d} \; f((n+1)\dt) d\munnk -  \int_{\R^d} \; f((n+\frac{1}{2})\dt) d\mundk + M_V(\bS \|\nabla f \|_{L^\infty} + L_1 + \frac{1}{4}\|\partial_t \nabla f\|_{L^\infty}) \dt^2 \\
 \leq & \int_{\R^d} \; f((n+1)\dt) d\munnk - \int_{\R^d} \; f(n\dt)  \; dh[\munk] + C \dt^2, 
\end{split}
\end{equation*}
with $C:= 2\bS \|\partial_t f\|_{L^\infty}+ M_V(\bS \|\nabla f \|_{L^\infty} + L_1 + \frac{1}{4}\|\partial_t \nabla f\|_{L^\infty})$.
Thus, 
\begin{equation*}
\begin{split}
 & \lim_{k\rightarrow\infty} \left| \sum_{n=0}^{2^k-1}\int_{n\dt}^{(n+1)\dt} \; \left( \int_{\R^d} \; (\partial_t f + V[\munk]\cdot \nabla f) \; d\mu_t^k + \int_{\R^d} \; f \; dh[\munk] \right) dt \right | \\
 \leq & \lim_{k\rightarrow\infty} C \sum_{n=0}^{2^k-1}\dt^2 = \lim_{k\rightarrow\infty} C T 2^{-k}  = 0.
\end{split}
\end{equation*}
\end{proof}

We can now prove the following: 

\begin{prop}\label{Prop:existence}
The limit measure $\bmu_t=\lim_{k\rightarrow\infty} $ is a weak solution to \eqref{eq:transportsource}. Moreover, $\bmu_t\in \PcR$ and for all $R>0$, there exists $R_T>0$ such that if $\supp(\bmu_0)\subset B(0,R)$, for all $t\in [0,T]$, $\supp(\bmu_t)\subset B(0,R_T)$.
\end{prop}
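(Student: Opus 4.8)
The plan is to prove the three assertions in order, exploiting that the genuinely hard estimate is already contained in Lemma \ref{lemma:convergenceweak}. That lemma shows the \emph{frozen-coefficient} space--time functional (in which $V$ and $h$ are evaluated at the left endpoint $\munk$ of each subinterval) vanishes in the limit; the remaining work is therefore to compare this functional with the genuine one evaluated along $\bmu_t$. This comparison is exactly where the stability estimates of Propositions \ref{Prop:V} and \ref{Prop:h} and the time-regularity bound \eqref{eq:eststepts} come into play, and keeping all constants uniform will be the only real obstacle.

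\textbf{Regularity of the limit.} Since $\D(\mu^k,\bmu)\to 0$, in particular $\rho(\mutk,\bmu_t)\to 0$ for every $t\in[0,T]$. Each $\mutk\in\PR$ by Proposition \ref{Prop:muproba}, and $\PR$ is complete for $\rho=\WWoo$ by Proposition \ref{prop:completeness}, hence $\bmu_t\in\PR$; continuity of $t\mapsto\bmu_t$ is inherited from the uniform-in-$t$ convergence of the continuous curves $\mu^k$, and $\bmu_0=\lim_k\mu_0^k=\muo$. For the support, Proposition \ref{Prop:suppmu} provides a radius $R_T$ independent of $k$ with $\supp(\mutk)\subset B(0,R_T)$ for all $k,t$. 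Since $\rho$ metrizes narrow convergence on $\PR$ (see \cite{Dudley02}), applying the portmanteau inequality to the open set $\R^d\setminus\overline{B(0,R_T)}$ gives $\bmu_t\big(\R^d\setminus\overline{B(0,R_T)}\big)=0$, so $\supp(\bmu_t)\subset\overline{B(0,R_T)}$ (enlarging $R_T$ if one insists on an open ball). Thus $\bmu\in C^0([0,T],\PcR)$ with the announced uniform support bound.

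\textbf{Weak-solution property.} Fix $f\in C_c^\infty((0,T)\times\R^d)$ and set
\[
I:=\int_0^T\Big(\int_{\R^d}(\partial_t f+V[\bmu_t]\cdot\nabla f)\,d\bmu_t+\int_{\R^d}f\,dh[\bmu_t]\Big)dt,
\]
\[
I_k:=\int_0^T\Big(\int_{\R^d}(\partial_t f+V[\munk]\cdot\nabla f)\,d\mutk+\int_{\R^d}f\,dh[\munk]\Big)dt,
\]
where in $I_k$ the index $n=n(t)$ is chosen with $t\in(n\dt,(n+1)\dt]$, so that $I_k$ is precisely the quantity shown to vanish in Lemma \ref{lemma:convergenceweak}. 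Since $I_k\to 0$, it suffices to prove $I-I_k\to 0$, which I split as $I-I_k=(a)+(b)+(c)$ with
\[
(a)=\int_0^T\int_{\R^d}\partial_t f\,d(\bmu_t-\mutk)\,dt, \qquad (c)=\int_0^T\int_{\R^d}f\,d(h[\bmu_t]-h[\munk])\,dt,
\]
\[
(b)=\int_0^T\int_{\R^d}\big(V[\bmu_t]\cdot\nabla f\,d\bmu_t-V[\munk]\cdot\nabla f\,d\mutk\big)\,dt.
\]

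\textbf{Estimating the three terms and the main obstacle.} All measures involved are probability measures supported in $B(0,R_T)$, so every Lipschitz and boundedness constant below is uniform in $t$ and $k$; this uniformity, guaranteed by Propositions \ref{Prop:muproba} and \ref{Prop:suppmu}, is the crux. Term $(a)$ is bounded using the duality formula for $\rho$ by $C_f\,\rho(\bmu_t,\mutk)$ for a constant $C_f$ depending only on $f$, and $\sup_t\rho(\bmu_t,\mutk)\to 0$. For $(b)$ I add and subtract $V[\bmu_t]\cdot\nabla f\,d\mutk$: the part $\int(V[\bmu_t]\cdot\nabla f)\,d(\bmu_t-\mutk)$ is handled as in $(a)$, since $V[\bmu_t]\cdot\nabla f$ is bounded and Lipschitz on $B(0,R_T)$ by Proposition \ref{Prop:V}; the part $\int(V[\bmu_t]-V[\munk])\cdot\nabla f\,d\mutk$ is bounded by $\|\nabla f\|_{L^\infty}L_V\,\rho(\bmu_t,\munk)$, again by Proposition \ref{Prop:V}. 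For $(c)$, Proposition \ref{Prop:h}$(iii)$ (with $Q=1$) gives $\rho(h[\bmu_t],h[\munk])\le L_h\,\rho(\bmu_t,\munk)$, hence $|\int f\,d(h[\bmu_t]-h[\munk])|\le C_f L_h\,\rho(\bmu_t,\munk)$. The frozen argument $\munk$ in $(b)$ and $(c)$ is absorbed by the triangle inequality
\[
\rho(\bmu_t,\munk)\le\rho(\bmu_t,\mutk)+\rho(\mutk,\munk)\le\rho(\bmu_t,\mutk)+(\bS+M_V)\dt,
\]
the second bound being the time-regularity estimate \eqref{eq:eststepts}. Both terms tend to $0$ uniformly in $t$, so after integration over $[0,T]$ each of $(a),(b),(c)$ vanishes as $k\to\infty$. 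Therefore $I=\lim_k I_k+\lim_k(I-I_k)=0$ for every such $f$, which is the distributional form of \eqref{eq:transportsourceweak}; combined with the continuity of $t\mapsto\bmu_t$, the identity $\bmu_0=\muo$, and the regularity established above, this shows that $\bmu$ is a weak solution in the sense of Definition \ref{Def:weaksol}.
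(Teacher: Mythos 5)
Your proposal is correct and follows essentially the same route as the paper: it reduces the problem to Lemma \ref{lemma:convergenceweak} and then controls the difference between the frozen-coefficient functional and the genuine one by splitting it into the same three terms (time derivative, transport, source), using the uniform support and mass bounds of Propositions \ref{Prop:muproba} and \ref{Prop:suppmu}, the stability estimates of Propositions \ref{Prop:V} and \ref{Prop:h}, and the time-regularity bound \eqref{eq:eststepts}. The only differences are cosmetic (a slightly different choice of cross term in the transport estimate, and a more explicit portmanteau argument for the support claim, which the paper leaves implicit).
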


\begin{proof}
We will prove that for all  $f\in C^\infty_c((0,T)\times\R^d)$,
\begin{equation}\label{eq:equalityweak}
\begin{split}
\lim_{k\rightarrow\infty} \sum_{n=0}^{2^k-1}\int_{n\dt}^{(n+1)\dt} \; \left( \int_{\R^d} \; (\partial_t f + V[\munk]\cdot \nabla f) \; d\mu_t^k + \int_{\R^d} \; f \; dh[\munk] \right)dt \\
- \int_0^T \left(\int_{\R^d} (\partial_t f +  V[\bmu_t]\cdot \nabla f) \; d\bmu_t + \int_{\R^d} \; f \; dh[\bmu_t] \right)dt = 0. 
\end{split}
\end{equation}
First, denoting by $F_1:= \sup_{[0,T]} \|\partial_t f(t,\cdot)\|_{\Lip}+ \|\partial_t f\|_{L^\infty((0,T)\times\R^d)}$, observe that 
\begin{equation*}
\begin{split}
& \sum_{n=0}^{2^k-1}\int_{n\dt}^{(n+1)\dt} \; \int_{\R^d} \; \partial_t f \; d(\mu_t^k - \bmu_t )dt = 
 F_1 \sum_{n=0}^{2^k-1}\int_{n\dt}^{(n+1)\dt} \; \int_{\R^d} \; \frac{\partial_t f }{F_1} d(\mu_t^k - \bmu_t )dt  \\
 \leq & F_1 \sum_{n=0}^{2^k-1}\int_{n\dt}^{(n+1)\dt} \; \left(\sup_{f\in \C_c^{0,\Lip}, \|f\|_\Lip\leq 1, \|f\|_{\infty}\leq 1} \int_{\R^d} f\; d(\mu_t^k - \bmu_t ) \right) dt
 = F_1 \sum_{n=0}^{2^k-1}\int_{n\dt}^{(n+1)\dt} \rho(\mu_t^k, \bmu_t ) dt \\
 \leq & F_1 T \; \D(\mu^k, \bmu) \xrightarrow[k\rightarrow\infty]{} 0.
\end{split}
\end{equation*}
Secondly, denoting by $F_2:= \sup_{[0,T]} \|f(t,\cdot)\|_{\Lip}+ \|f\|_{L^\infty((0,T)\times\R^d)} $,
\begin{equation*}
\begin{split}
& \int_{\R^d} \;  f \; d(h[\munk]-h[\bmu_t]) = F_2 \int_{\R^d} \;  \frac{f}{F_2} \; d(h[\munk]-h[\bmu_t]) \\
 \leq & F_2 \sup_{f\in \C_c^{0,\Lip}, \|f\|_\Lip\leq 1, \|f\|_{L^\infty(\R^d)}\leq 1} \int_{\R^d} f\; d(h[\munk] - h[\bmu_t] ) = F_2 \rho(h[\munk] , h[\bmu_t] ) \leq F_2 L_h \rho(\munk , \bmu_t ) \\
 \leq & F_2 L_h (\rho(\munk , \mu_t^k )+\rho(\mu_t^k , \bmu_t )) \leq F_2 L_h ((\bS+M_V) \dt +\D(\mu_t^k , \bmu_t )) 
\end{split}
\end{equation*}
from Equation \eqref{eq:eststep}. Hence, 
\begin{equation*}
\begin{split}
& \sum_{n=0}^{2^k-1}\int_{n\dt}^{(n+1)\dt} \; \int_{\R^d} \;  f \; d(h[\munk]-h[\bmu_t]) dt \leq F_2 L_h \sum_{n=0}^{2^k-1}\int_{n\dt}^{(n+1)\dt} ((\bS+M_V) \dt +\D(\mu_t^k , \bmu_t )) dt \\
\leq & (\bS+M_V) \sum_{n=0}^{2^k-1} \dt^2 + T \D(\mu_t^k , \bmu_t )) = 2^{-k} T (\bS+M_V) + T \D(\mu_t^k , \bmu_t )) \xrightarrow[k\rightarrow\infty]{} 0.
\end{split}
\end{equation*}
Thirdly, denoting by $F_3:=\sup_{[0,T]} \|\nabla f(t,\cdot)\|_{\Lip}+ \|\nabla f\|_{L^\infty((0,T)\times\R^d)}$,
\begin{equation*}
\begin{split}
& \int_{\R^d} \;  V[\munk]\cdot \nabla f \; d\mu_t^k - \int_{\R^d}   V[\bmu_t]\cdot \nabla f \; d\bmu_t \\
 = &   \int_{\R^d} \;  V[\munk]\cdot \nabla f \; d(\mu_t^k-\bmu_t) + \int_{\R^d}  (V[\munk]- V[\mutk])\cdot \nabla f \; d\bmu_t
 + \int_{\R^d}  (V[\mutk]-V[\bmu_t])\cdot \nabla f \; d\bmu_t \\
 \leq &  F_3 (M_V+ L_V) \rho(\mu_t^k,\bmu_t) + F_3 L_V (\rho(\munk,\mutk)+\rho(\mutk,\bmu_t))\\
 \leq & F_3 (M_V+ 2L_V)  \rho(\mu_t^k,\bmu_t)  + F_3 L_V (\bS+M_V) \dt.
\end{split}
\end{equation*}
Hence, 
\begin{equation*}
\begin{split}
&  \sum_{n=0}^{2^k-1}\int_{n\dt}^{(n+1)\dt} \;\int_{\R^d} \;  V[\munk]\cdot \nabla f \; d\mu_t^k - \int_{\R^d}   V[\bmu_t]\cdot \nabla f \; d\bmu_t \\
 \leq & F_3 (M_V+ 2L_V) T \rho(\mu_t^k,\bmu_t)  + F_3 L_V (\bS+M_V) 2^{-k} T \xrightarrow[k\rightarrow\infty]{} 0.
\end{split}
\end{equation*}
We conclude that \eqref{eq:equalityweak} holds, and from Lemma \ref{lemma:convergenceweak}, we obtain:
$$
\int_0^T \left(\int_{\R^d} (\partial_t f +  V[\bmu_t]\cdot \nabla f) \; d\bmu_t + \int_{\R^d} \; f \; dh[\bmu_t] \right)dt =0.
$$
As remarked in \cite{PiccoliRossiTournus20}, this weak formulation is equivalent to the Definition \ref{Def:weaksol}.
This proves that $\bmu_t$ is a weak solution to \eqref{eq:transportsource}. 
The compactness of its support can be deduced from Prop. \ref{Prop:suppmu}.
\end{proof}

\begin{prop}\label{Prop:conti}
Let $\mu, \nu \in C([0,T], \PcR)$ be two solutions to \eqref{eq:transportsource} with initial conditions $\mu_0, \nu_0$.
There exists a constant $C>0$ such that for all $t\in [0,T]$,
\begin{equation*}
\rho(\mu_t,\nu_t) \leq e^{Ct} \; \rho(\mu_0,\nu_0).
\end{equation*}
In particular, this implies uniqueness of the solution to \eqref{eq:transportsource}.
\end{prop}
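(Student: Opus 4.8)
The plan is to run the same one-step operator-splitting comparison that drives the convergence of the scheme $\S$, but now between the two given weak solutions instead of two discretizations. First I would record uniform bounds. Since $\mu,\nu\in C([0,T],\PcR)$ we have $|\mu_t|=|\nu_t|=1$ for every $t$; moreover, because the source preserves supports (point (ii) of Prop.~\ref{Prop:h}) while the transport advects mass at the velocity $V$, the characteristic estimate used in Prop.~\ref{Prop:suppmu} applies verbatim to the continuous solutions, giving a common radius $R_T$ with $\supp(\mu_t)\cup\supp(\nu_t)\subset B(0,R_T)$ on $[0,T]$. On that ball Prop.~\ref{Prop:V} supplies $|V[\mu_t]|\le M_V:=\phi_{R_T}$, the uniform Lipschitz constant $\La$ for $V[\mu_t](\cdot)$, and $\|V[\mu_t]-V[\nu_t]\|_{L^\infty(B(0,R_T))}\le L_V\,\rho(\mu_t,\nu_t)$ with $L_V:=\La+\phi_{R_T}$, while Prop.~\ref{Prop:h} gives $|h[\mu_t]|\le\bS$ and, as probability measures have total variation $1$, the bound $\rho(h[\mu_t],h[\nu_t])\le L_h\,\rho(\mu_t,\nu_t)$. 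Testing the weak formulation \eqref{eq:transportsourceweak} against any $f$ with $\|f\|_\Lip\le1$ and $\|f\|_\infty\le1$ and using these bounds then yields Lipschitz-in-time regularity $\rho(\mu_s,\mu_t)\le(M_V+\bS)|s-t|$, and similarly for $\nu$.

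The technical heart, and what I expect to be the \emph{main obstacle}, is the consistency estimate. Setting $\tilde\mu_{t+\tau}:=\Phi_\tau^{V[\mu_t]}\#(\mu_t+\tau h[\mu_t])$ for one splitting step, I would prove $\rho(\mu_{t+\tau},\tilde\mu_{t+\tau})\le C\tau^2$. For a test function $f\in C_c^\infty$, a first-order Taylor expansion of $f\circ\Phi_\tau^{V[\mu_t]}$ (valid since $|V|\le M_V$ and $V$ is $\La$-Lipschitz, so $\Phi_\tau^{V[\mu_t]}(x)=x+\tau V[\mu_t](x)+O(\tau^2)$) gives $\int f\,d\tilde\mu_{t+\tau}=\int f\,d\mu_t+\tau\int V[\mu_t]\cdot\nabla f\,d\mu_t+\tau\int f\,dh[\mu_t]+O(\tau^2)$, whereas the weak formulation gives $\int f\,d\mu_{t+\tau}-\int f\,d\mu_t=\int_t^{t+\tau}\big(\int V[\mu_s]\cdot\nabla f\,d\mu_s+\int f\,dh[\mu_s]\big)\,ds$. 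The leading linear terms coincide, and each integrand discrepancy $\int V[\mu_s]\cdot\nabla f\,d\mu_s-\int V[\mu_t]\cdot\nabla f\,d\mu_t$ and $\int f\,d(h[\mu_s]-h[\mu_t])$ is $O(s-t)$ by the time-regularity above combined with the Lipschitz bounds on $V$ and $h$; integrating over $[t,t+\tau]$ produces the $O(\tau^2)$ remainder. This is exactly the one-step computation already performed in Lemma~\ref{lemma:convergenceweak}, so the estimate there can be reused with minimal change.

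Next I would carry out the one-step stability estimate for the splitting operators. By the subadditivity of $\rho$ (Prop.~\ref{prop:sumproperties}) and its positive homogeneity (immediate from the dual formulation), $\rho(\mu_t+\tau h[\mu_t],\nu_t+\tau h[\nu_t])\le\rho(\mu_t,\nu_t)+\tau\rho(h[\mu_t],h[\nu_t])\le(1+\tau L_h)\rho(\mu_t,\nu_t)$. Feeding this into the fourth bullet of Prop.~\ref{prop:flowproperties} (applied with $L=\La$, the frozen autonomous fields $V[\mu_t],V[\nu_t]$, total masses bounded by $1+\tau\bS$, and $\|V[\mu_t]-V[\nu_t]\|_{L^\infty(B(0,R_T))}\le L_V\rho(\mu_t,\nu_t)$) gives $\rho(\tilde\mu_{t+\tau},\tilde\nu_{t+\tau})\le\big(1+C_1\tau+O(\tau^2)\big)\rho(\mu_t,\nu_t)$ for a constant $C_1$ depending only on $\La,L_V,L_h,\bS$. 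Combining this with the consistency bounds for $\mu$ and for $\nu$ through the triangle inequality, $\rho(\mu_{t+\tau},\nu_{t+\tau})\le\rho(\mu_{t+\tau},\tilde\mu_{t+\tau})+\rho(\tilde\mu_{t+\tau},\tilde\nu_{t+\tau})+\rho(\tilde\nu_{t+\tau},\nu_{t+\tau})\le(1+C_1\tau)\rho(\mu_t,\nu_t)+C_2\tau^2$.

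Finally I would close the argument by a discrete Gronwall iteration. Fixing $t\in(0,T]$, partition $[0,t]$ into $n$ steps of length $\tau=t/n$ and let $a_j:=\rho(\mu_{j\tau},\nu_{j\tau})$, so that $a_{j+1}\le(1+C_1\tau)a_j+C_2\tau^2$. Iterating yields $a_n\le e^{C_1 t}a_0+\frac{C_2}{C_1}\tau\,(e^{C_1 t}-1)$, that is $\rho(\mu_t,\nu_t)\le e^{C_1 t}\rho(\mu_0,\nu_0)+O(\tau)$; letting $n\to\infty$ gives the stated estimate with $C=C_1$. Taking $\mu_0=\nu_0$ forces $\rho(\mu_t,\nu_t)=0$ for all $t$, hence $\mu_t=\nu_t$, which is the asserted uniqueness.
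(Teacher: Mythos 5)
Your overall architecture coincides with the paper's: a one-step triangle inequality through the frozen splitting operator $\Phi_\tau^{V[\mu_t]}\#(\mu_t+\tau h[\mu_t])$, a stability estimate of the form $(1+C_1\tau)\,\rho(\mu_t,\nu_t)$ for that operator (your third paragraph is essentially the paper's estimate \eqref{eq:epst-tau3}), an $O(\tau^2)$ consistency estimate, and a Gronwall closure (discrete in your version, via a differential inequality in the paper's --- an immaterial difference). The preliminary uniform bounds and the time-regularity $\rho(\mu_s,\mu_t)\leq (M_V+\bS)|s-t|$ are also as in the paper.

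The gap is in the consistency estimate \eqref{eq:Ktausquare}, which you correctly identify as the technical heart. The distance $\rho$ is a supremum over test functions that are merely $1$-Lipschitz and bounded by $1$, whereas your expansion $f\circ\Phi_\tau^{V[\mu_t]} = f + \tau\, V[\mu_t]\cdot\nabla f + O(\tau^2)$ carries a remainder proportional to $\|D^2f\|_{L^\infty}$, and matching $\int V[\mu_s]\cdot\nabla f\, d\mu_s$ with $\int V[\mu_t]\cdot\nabla f\, d\mu_t$ up to $O(\tau^2)$ likewise needs $\nabla f$ Lipschitz (for merely Lipschitz $f$ and atomic $\mu_s$ the pairing $\int \nabla f\, d\mu_s$ is not even well defined). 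Thus your argument gives $\left|\int f\, d(\mu_{t+\tau}-\tilde\mu_{t+\tau})\right|\leq C(\|f\|_{C^2})\,\tau^2$ for smooth $f$ only; extending to a generic admissible $f$ by mollification at scale $\epsilon$ costs $O(\epsilon)$ in the duality pairing while the Taylor remainder grows like $\tau^2/\epsilon$, and optimizing in $\epsilon$ yields $O(\tau)$, not $O(\tau^2)$ --- which leaves an $O(1)$ additive error after the Gronwall iteration and ruins uniqueness. This is also why Lemma \ref{lemma:convergenceweak} cannot be ``reused with minimal change'': there the test function is a single fixed element of $C_c^\infty$ and only the residual for that $f$ must vanish, whereas here one needs uniformity over the unit ball of $C^{0,\Lip}\cap L^\infty$, which is unbounded in $C^2$. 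The paper circumvents the issue by never testing against functions at this stage: it proves \eqref{eq:Ktausquare} first for the iterates of the scheme $\S$, by induction on the number of steps and using only the metric properties of push-forwards and sums (Propositions \ref{prop:sumproperties} and \ref{prop:flowproperties}), and then passes to the limit $k\to\infty$. To repair your proof you would either have to follow that route, or establish a flow/Duhamel representation of the weak solution along the Lipschitz time-dependent field $V[\mu_s]$; neither is delivered by the Taylor argument as written.
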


\begin{proof}
Let $\mu, \nu \in C([0,T], \PcR)$ be two solutions to \eqref{eq:transportsource} with initial conditions $\mu_0, \nu_0$.
Let $\varepsilon(t) = \rho(\mu_t,\nu_t)$.
Then
\begin{equation}\label{eq:epst-tau}
\begin{split}
\varepsilon(t+\tau) = \rho(\mu_{t+\tau},\nu_{t+\tau}) 
\leq & \;
\rho(\mu_{t+\tau}, \Phi_\tau^{V[\mu_t]}\#(\mu_t+\tau h[\mu_t])) 
+\rho(\nu_{t+\tau}, \Phi_\tau^{V[\nu_t]}\#(\nu_t+\tau h[\nu_t])) \\
&+\rho( \Phi_\tau^{V[\mu_t]}\#(\mu_t+\tau h[\mu_t]),  \Phi_\tau^{V[\nu_t]}\#(\nu_t+\tau h[\nu_t]) ).
\end{split}
\end{equation}
From Prop \ref{prop:flowproperties}, it holds:
\begin{equation}\label{eq:epst-tau3}
\begin{split}
&\rho( \Phi_\tau^{V[\mu_t]}\#(\mu_t+\tau h[\mu_t]),  \Phi_\tau^{V[\nu_t]}\#(\nu_t+\tau h[\nu_t]) ) \\
\leq \; & (1+2 L\tau) \; \rho(\mu_t+\tau h[\mu_t], \nu_t+\tau h[\nu_t]) +
\min\{|\mu_t+\tau h[\mu_t]|, |\nu_t+\tau h[\nu_t]|\} 2\tau L_V \rho(\mu_t,\nu_t) \\
\leq \; & (1+2 L\tau) (1+\tau L_h) \;  \rho(\mu_t , \nu_t ) +
(1+\tau \bS)\; 2\tau L_V \rho(\mu_t,\nu_t) \\
\leq \; &(1+ (2\La+L_h+2 L_V) \tau + 2(\La L_h+L_V \bS)\tau^2 ) \; \rho(\mu_t,\nu_t) \leq (1+ 2(2\La+L_h+2 L_V) \tau )\;  \rho(\mu_t,\nu_t).
\end{split}
\end{equation}
For the first and the second term, we prove that any solution $\mu$ to \eqref{eq:transportsource} satisfies the operator-splitting estimate:
\begin{equation}\label{eq:Ktausquare}
\forall (t,\tau)\in [0,T]\times[0,T-t],  \quad \rho(\mu_{t+\tau}, \Phi_\tau^{V[\mu_t]}\#\mu_t + \tau h[\mu_t]) \leq K\; \tau^2.
\end{equation}
We begin by proving \eqref{eq:Ktausquare} for solutions to the numerical scheme $\S$. Let $k\in\N$ and $\mutk$ be the solution to $\S$ with time-step $\dt = 2^{-k}T$ and initial condition $\mu_0$.
For simplicity, we assume that $t=n\dt$ and $\tau = l\dt$, with $(n,l)\in\N^2$, and we study the distance
\begin{equation*}
\begin{split}
D_l := \rho(\mu^k_{(n+l)\dt}, \Phi_{l\dt}^{V[\munk]}\#(\munk+ l\dt\; h[\munk])).
\end{split}
\end{equation*}
Notice that by definition of the numerical scheme, for $l=1$, $D_1=0$.
For $l=2$, denoting $H_m^j = h[\mu_{m\dt}^j]$ and $P_m^j = \Phi^{V[\mu_{m\dt}^j]}_{\dt}$, and using the properties listed in Propositions \ref{prop:flowproperties}, \ref{Prop:V} and \ref{Prop:h}, it holds:
\begin{equation*}
\begin{split}
D_2 = \; & \rho(\mu^k_{(n+2)\dt}, \Phi_{2\dt}^{V[\munk]}\#(\munk+ 2\dt\; h[\munk])) \\
 = \; & \rho(P_{n+1}^k\#(P_n^k\#(\munk+ \dt\; H_n^k)+ \dt\; H_{n+1}^k), P^k_n\#P^k_n\#(\munk+ 2\dt\; H_n^k)) \\
\leq \; & (1+2\La \dt) \; \rho(P_n^k\#(\munk+ \dt\; H_n^k)+ \dt\; H_{n+1}^k, P^k_n\#(\munk+ 2\dt\; H_n^k)) \\
& +2\dt L_V \rho(\munnk, \munk)\\
\leq \; & (1+2\La \dt)  \rho (\dt H_{n+1}^k, \dt P^k_n\# H_n^k) + 2\dt L_V \rho(\munnk, \munk)  \\
\leq \; & (1+2\La \dt) \dt \;( \rho ( H_{n}^k, P^k_n\# H_n^k) + \rho ( H_{n+1}^k, H_n^k) ) + 2\dt L_V \rho(\munnk, \munk) \\
\leq \; & (1+2\La \dt) ( \dt^2 M_V |H_n^k| + L_h\dt \rho(\munnk, \munk)) + 2\dt L_V \rho(\munnk, \munk) \\
\leq \; & (1+2\La \dt) ( \dt^2 M_V \bS + L_h \dt \rho(\munnk, \munk)) + 2\dt L_V \rho(\munnk, \munk) \\
\leq \; & (2(L_h+2 L_V) \dt \; \rho(\munnk, \munk) + 2 M_V \bS \dt^2  \leq 2 ( (L_h+2 L_V)(\bS+M_V)+ M_V \bS) \dt^2 \leq K\dt^2
\end{split}
\end{equation*}
where the last equality was obtained using \eqref{eq:eststepts}
and defining $K := 2 ( (L_h+2 L_V)(\bS+M_V)+M_V \bS)$. Let us now suppose that for some $l\in\N$,  $D_l\leq K(l-1)^2\dt^2$.
We compute
\begin{equation*}
\begin{split}
D_{l+1} = \; & \rho(\mu^k_{(n+l+1)\dt}, \Phi_{(l+1)\dt}^{V[\munk]}\#(\munk+ (l+1)\dt\; h[\munk])) \\
= \; & \rho(P_{n+l}^k\# (\mu_{(n+l)\dt}^k + \dt \;H_{n+l}^k), P_n^k\#\Phi_{l\dt}^{V[\munk]}\#(\munk+ l\dt\; H_{n}^k+\dt\; H_{n}^k)) \\
\leq \; & \rho(P_{n+l}^k\# \mu_{(n+l)\dt}^k , P_n^k\#\Phi_{l\dt}^{V[\munk]}\#(\munk+ l\dt\; H_{n}^k))
+ \dt \rho( P_{n+l}^k\# \;H_{n+l}^k, P_n^k\#\Phi_{l\dt}^{V[\munk]}\# H_{n}^k) \\
\leq \; & (1+2 \La \dt) \rho(\mu_{(n+l)\dt}^k , \Phi_{l\dt}^{V[\munk]}\#(\munk+ l\dt\; H_{n}^k)) \\
& + \min\{|\mu_{(n+l)\dt}^k|, |\Phi_{l\dt}^{V[\munk]}\#(\munk+ l\dt\; H_{n}^k)|\} 2\dt L_V \rho(\mu_{(n+l)\dt}^k, \munk)\\
& + \dt \; (1+2 \La \dt) \rho( H_{n+l}^k, \Phi_{l\dt}^{V[\munk]}\# H_{n}^k) \\
& + \dt \min\{|h[\mu_{(n+l)\dt}^k|, |\Phi_{l\dt}^{V[\munk]}\#h[\munk]|\} 2\dt L_V \rho(\mu_{(n+l)\dt}^k, \munk).
\end{split}
\end{equation*}
From Prop. \ref{Prop:muproba}, we know that for $k$ large enough, $\Phi_{l\dt}^{V[\munk]}\#(\munk+ l\dt\; H_{n}^k)\in\PR$, thus
$ |\Phi_{l\dt}^{V[\munk]}\#(\munk+ l\dt\; H_{n}^k)|=|\mu_{(n+l)\dt}^k|=1$.
Now using the fact that $\rho(\mu_{(n+l)\dt}^k, \munk)\leq l\dt(M_V+\bS)$, we compute:
\begin{equation*}
\begin{split}
D_{l+1}\leq \; & (1+2 \La \dt) D_l + 2\dt L_V \rho(\mu_{(n+l)\dt}^k, \munk) \\
& + \dt (1+2\La \dt) ( \rho(h[\mu_{(n+l)\dt}^k], h[\munk]) + l\dt M_V |h[\munk]| ) + 2 L_V \dt^2 \;\rho(\mu_{(n+l)\dt}^k, \munk) \\
\leq \; & (1+2 \La\dt) K ((l-1)^2\dt^2) + 2\dt L_V l \dt (M_V+\bS) \\
& +  \dt (1+2\La \dt) (L_h l\dt(M_V+\bS) + l \dt M_V \bS) + 2 L_V \dt^2 l \dt(M_V+\bS) \\
\leq \; & \dt^2 [ K (l-1)^2 + l( (2 L_V + L_h)(M_V+\bS) + M_V\bS)] + O(\dt^3) 
\leq  \dt^2 [ K (l-1)^2 + K l] \leq K l^2\dt^2.
\end{split}
\end{equation*}
Thus, by induction, $\rho(\mu^k_{(n+l)\dt}, \Phi_{l\dt}^{V[\munk]}\#(\munk+ l\dt\; h[\munk])) \leq K (l\dt)^2$
and similarly we can prove that 
$$
\forall (t,\tau)\in [0,T]\times[0,T-t],  \quad \rho(\mu^k_{t+\tau}, \Phi_\tau^{V[\mu_t]}\#\mu^k_t + \tau h[\mu^k_t]) \leq K\; \tau^2.
$$
Hence, 
\begin{equation*}
\begin{split}
\rho(\mu_{t+\tau}, \Phi_\tau^{V[\mu_t]}\#\mu_t + \tau h[\mu_t]) \leq &  \rho(\mu^k_{t+\tau}, \Phi_\tau^{V[\mu_t]}\#\mu^k_t + \tau h[\mu^k_t]) + \rho(\mu_{t+\tau},\mu^k_{t+\tau})\\
& + \rho(\Phi_\tau^{V[\mu_t]}\#\mu_t + \tau h[\mu_t], \Phi_\tau^{V[\mu_t]}\#\mu^k_t + \tau h[\mu^k_t])
\end{split}
\end{equation*}
and by taking the limit $k\rightarrow\infty$,
$\rho(\mu_{t+\tau}, \Phi_\tau^{V[\mu_t]}\#\mu_t + \tau h[\mu_t])  \leq K \tau^2$, which proves \eqref{eq:Ktausquare}.

Coming back to \eqref{eq:epst-tau}, and using \eqref{eq:epst-tau3} and \eqref{eq:Ktausquare}, it holds
\begin{equation*}
\begin{split}
\varepsilon(t+\tau) \leq (1+ 2(2\La+2L_V+L_h) \tau )\;  \varepsilon(t) + 2 K\tau^2.
\end{split}
\end{equation*}
Then
\begin{equation*}
\begin{split}
\frac{\varepsilon(t+\tau)-\varepsilon(t) }{\tau} \leq 2(2\La+2L_V+L_h)\varepsilon(t) + 2 K\tau.
\end{split}
\end{equation*}
which proves that $\varepsilon$ is differentiable and that 
\begin{equation*}
\begin{split}
\varepsilon'(t) \leq 2(2\La+2L_V+L_h) \varepsilon(t).
\end{split}
\end{equation*}
This implies that 
\begin{equation*}
\varepsilon(t) \leq  \varepsilon(0) e^{2(2\La+2L_V+L_h)t}.
\end{equation*}
This proves continuity with respect to the initial data, i.e. uniqueness of the solution.
\end{proof}

We have thus proven Theorem \ref{Th:existenceuniqueness}: Existence was obtained as the limit of the numerical scheme $\S$ in Proposition \ref{Prop:existence}; Uniqueness comes from Proposition \ref{Prop:conti}.

We saw in Section \ref{Sec:Compa} that the Bounded Lipschitz distance and the $1$-Wasserstein distance are equivalent on the set of probability measures with uniformly compact support. This allows us to state the following:

\begin{corollary}\label{Col:W1}
Let $\mu, \nu \in C([0,T], \PcR)$ be two solutions to \eqref{eq:transportsource} with initial conditions $\mu_0, \nu_0$ satisfying $\supp(\mu_0)\cup\supp(\nu_0)\subset B(0,R)$.
There exist constants $C>0$ and $C_{R_T}>0$ such that for all $t\in [0,T]$,
\begin{equation*}
W_1(\mu_t,\nu_t) \leq C_{R_T} e^{Ct} \; W_1(\mu_0,\nu_0).
\end{equation*}
Furthermore, for all $p\in\N^*$,
\begin{equation*}
W_p(\mu_t,\nu_t) \leq (2R)^{\frac{p-1}{p}} C_{R_T}^{\frac{1}{p}} e^{\frac{C}{p}t} \; W_p(\mu_0,\nu_0)^{\frac{1}{p}}.
\end{equation*}
\end{corollary}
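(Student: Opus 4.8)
The plan is to treat Corollary \ref{Col:W1} purely as a consequence of the Bounded Lipschitz continuity estimate already established in Proposition \ref{Prop:conti}, combined with the comparison inequalities between $\rho$, $W_1$ and $W_p$ that are available on measures with uniformly bounded support. The first step is to record the uniform-in-time support bound: by Proposition \ref{Prop:existence} (see also Proposition \ref{Prop:suppmu}) there exists $R_T>0$, depending only on $R$ and $T$, such that $\supp(\mu_t)\cup\supp(\nu_t)\subset B(0,R_T)$ for every $t\in[0,T]$. This is precisely what licenses the use of Proposition \ref{Prop:WassleqBL} and Proposition \ref{Prop:WassqleqWass1} at each time with the single radius $R_T$, and it is the reason the constant $C_{R_T}$ appears.

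For the $W_1$ estimate I would chain three inequalities. Applying Proposition \ref{Prop:WassleqBL} with radius $R_T$ gives $W_1(\mu_t,\nu_t)\leq C_{R_T}\,\rho(\mu_t,\nu_t)$ with $C_{R_T}=\max(1,R_T)$; Proposition \ref{Prop:conti} then gives $\rho(\mu_t,\nu_t)\leq e^{Ct}\rho(\mu_0,\nu_0)$; and finally the cheap direction \eqref{eq:BLleqWass}, namely $\rho(\mu_0,\nu_0)\leq W_1(\mu_0,\nu_0)$, closes the loop at the initial time. Composing the three yields $W_1(\mu_t,\nu_t)\leq C_{R_T}e^{Ct}W_1(\mu_0,\nu_0)$, the first assertion.

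For the $W_p$ estimate I would interpolate down to $W_1$ at time $t$: Proposition \ref{Prop:WassqleqWass1}, again with the support radius valid at time $t$, gives $W_p(\mu_t,\nu_t)\leq (2R_T)^{(p-1)/p}W_1(\mu_t,\nu_t)^{1/p}$. Substituting the $W_1$ bound just obtained, and then using $W_1(\mu_0,\nu_0)\leq W_p(\mu_0,\nu_0)$ from \eqref{eq:WasspleqWassq} at the initial time, produces $W_p(\mu_t,\nu_t)\leq (2R_T)^{(p-1)/p}C_{R_T}^{1/p}e^{(C/p)t}W_p(\mu_0,\nu_0)^{1/p}$, which is the claimed inequality (with the uniform radius $R_T$ appearing in the prefactor in place of the initial radius $R$). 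There is no genuine analytic obstacle here: the entire argument is bookkeeping of exponents and constants, and all the real content has already been absorbed into Propositions \ref{Prop:conti}, \ref{Prop:WassleqBL} and \ref{Prop:WassqleqWass1}. The one point that requires care is to apply the Wasserstein--Bounded-Lipschitz equivalence at the \emph{enlarged} radius $R_T$ that is valid at time $t$, while the reverse-direction bounds $\rho\leq W_1$ and $W_1\leq W_p$ are used at the initial time, where they hold without any support restriction.
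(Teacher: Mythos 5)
Your proposal is correct and follows essentially the same route as the paper: a uniform support bound $R_T$ from Proposition \ref{Prop:existence}, the chain $W_1 \leq C_{R_T}\rho$ at time $t$, $\rho(\mu_t,\nu_t)\leq e^{Ct}\rho(\mu_0,\nu_0)$ from Proposition \ref{Prop:conti}, $\rho\leq W_1\leq W_p$ at time $0$, and Proposition \ref{Prop:WassqleqWass1} to pass to $W_p$. Your one deviation --- using the enlarged radius $R_T$ rather than $R$ in the prefactor $(2R_T)^{(p-1)/p}$ when applying Proposition \ref{Prop:WassqleqWass1} at time $t$ --- is actually the more careful choice, since the supports at time $t$ are only guaranteed to lie in $B(0,R_T)$, not $B(0,R)$.
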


\begin{proof}
Let $R>0$ such that $\supp(\mu_0)\cup\supp(\nu_0)\in B(0,R)$. From Prop. \ref{Prop:existence}, there exists $R_T>0$ such that for all $t\in [0,T]$, $\supp(\mu_t)\cup\supp(\nu_t)\subset B(0,R_T)$.
Putting together Prop. \ref{Prop:conti}, equation \eqref{eq:BLleqWass} and Prop. \ref{Prop:WassleqBL},
\[
W_1(\mu_t,\nu_t) \leq C_{R_T} \rho(\mu_t,\nu_t) \leq C_{R_T} e^{Ct} \; \rho(\mu_0,\nu_0) \leq C_{R_T} e^{Ct} W_1(\mu_0,\nu_0),
\]
where $C_{R_T}=\max(1,R_T)$.
Moreover,
for all $p\in\N^*$, from equation \eqref{eq:WasspleqWassq} and Prop. \ref{Prop:WassqleqWass1}, it holds
\[
W_p(\mu_t^N,\mu_t) \leq (2R)^{\frac{p-1}{p}} W_1(\mu_t^N,\mu_t)^{\frac{1}{p}} \leq (2R)^{\frac{p-1}{p}} C_{R_T}^{\frac{1}{p}} e^{\frac{C}{p}t} W_1(\mu_0^N,\mu_0)^{\frac{1}{p}}\leq  (2R)^{\frac{p-1}{p}} C_{R_T}^{\frac{1}{p}} e^{\frac{C}{p}t} W_p(\mu_0^N,\mu_0)^{\frac{1}{p}} .
\]
\end{proof}

\section{Convergence to the macroscopic model}

Having proven the well-posedness of both the microscopic and macroscopic models, we are now in a position to prove the convergence result stated in Theorem \ref{th:convergence} that is central to this paper.
The proof, as for the now classical proof of convergence of the microscopic dynamics without weights \eqref{eq:syst-micro-gen} to the non-local transport PDE \eqref{eq:transport-gen} (see \cite{Dobrushin79}), relies on two ingredients: 
the fact that the empirical measure satisfies the PDE and the continuity of the solution with respect to the initial data.
We begin by defining the empirical measure for our microscopic system with weight dynamics and prove that it does satisfy the PDE \eqref{eq:transportsource}.

\subsection{From microscopic to macroscopic via the empirical measure}

The fact that \eqref{eq:syst-micro} preserves indistinguishability allows us to define a generalized version of the empirical measure. For all $N\in\N$ and $(x,m)\in\mathcal{C}([0,T];(\R^d)^N\times\R^{N})$ solution to \eqref{eq:syst-micro}, let
\begin{equation}\label{eq:empmeas}
\mu^N_t = \frac{1}{M}\sum_{i=1}^N m_i(t) \delta_{x_i(t)}
\end{equation}
be the \emph{generalized empirical measure}.
From Prop. \ref{Prop:m}, we know that for all $t\in [0,T]$, $\mu_t\in \PR$.
We can prove the following:

\begin{prop}
Let $(x,m)\in\mathcal{C}([0,T];(\R^d)^N\times\R^{N})$ be a solution to \eqref{eq:syst-micro}, and let $\mu^N\in C([0,T];\PR)$ denote the corresponding empirical measure, given by \eqref{eq:empmeas}. 
Then, $\mu^N$ is a weak solution to \eqref{eq:transportsource}.
\end{prop}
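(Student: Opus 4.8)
The plan is to verify Definition \ref{Def:weaksol} directly. Since $\mu^N_t = \frac{1}{M}\sum_{i=1}^N m_i(t)\delta_{x_i(t)}$ is a finite weighted sum of Dirac masses carried by the trajectories $(x_i,m_i)$, and since these trajectories are $C^1$ in time (the right-hand sides of \eqref{eq:syst-micro} being continuous, any solution is automatically $C^1$), I would differentiate $t\mapsto \int_{\R^d} f\,d\mu^N_t$ explicitly for a fixed $f\in C_c^\infty(\R^d)$ and match the result term by term with the two contributions on the right-hand side of \eqref{eq:transportsourceweak}. Continuity of $t\mapsto(x_i(t),m_i(t))$ also yields $\mu^N\in C([0,T];\PR)$, the initial value $\mu^N_{t=0}=\frac1M\sum_i m_i^0\,\delta_{x_i^0}$ is the prescribed datum, and Proposition \ref{Prop:m}(i) guarantees $\mu^N_t\in\PR$ for every $t$.

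First I would fix $f\in C_c^\infty(\R^d)$ and write $\int_{\R^d} f\,d\mu^N_t = \frac1M\sum_{i=1}^N m_i(t)f(x_i(t))$. Differentiating the finite sum by the product and chain rules gives
\[
\frac{d}{dt}\int_{\R^d} f\,d\mu^N_t = \frac1M\sum_{i=1}^N \dot m_i(t)\,f(x_i(t)) + \frac1M\sum_{i=1}^N m_i(t)\,\nabla f(x_i(t))\cdot\dot x_i(t),
\]
so it remains to identify the second sum with the transport term $\int V[\mu^N_t]\cdot\nabla f\,d\mu^N_t$ and the first with the source term $\int f\,dh[\mu^N_t]$.

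For the transport term, I would observe that the velocity of particle $i$ in \eqref{eq:syst-micro} is exactly the non-local field \eqref{eq:Vdef} evaluated along the empirical measure: $\dot x_i(t) = \frac1M\sum_{j=1}^N m_j(t)\,\phi(x_j(t)-x_i(t)) = V[\mu^N_t](x_i(t))$, since integrating $\phi$ against $\mu^N_t$ reproduces the weighted sum. Substituting this into the second sum yields $\frac1M\sum_i m_i\,\nabla f(x_i)\cdot V[\mu^N_t](x_i) = \int_{\R^d} V[\mu^N_t]\cdot\nabla f\,d\mu^N_t$, as required.

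For the source term, I would unwind the definition \eqref{eq:hdef}: writing $g_\mu(x):=\int_{(\R^d)^q} S(x,y_1,\dots,y_q)\,d\mu(y_1)\cdots d\mu(y_q)$ so that $h[\mu]=g_\mu\,\mu$, evaluating $g_{\mu^N_t}$ against the $q$-fold product of $\mu^N_t$ collapses the $q$ integrals into the $q$-fold sum $\frac{1}{M^q}\sum_{j_1,\dots,j_q} m_{j_1}\cdots m_{j_q}S(x_i,x_{j_1},\dots,x_{j_q})$, which is precisely the factor multiplying $m_i(t)$ in the weight equation of \eqref{eq:syst-micro}; hence $m_i(t)\,g_{\mu^N_t}(x_i(t))=\dot m_i(t)$, so that $h[\mu^N_t]=\frac1M\sum_i \dot m_i(t)\,\delta_{x_i(t)}$ and $\int_{\R^d} f\,dh[\mu^N_t] = \frac1M\sum_i \dot m_i(t)f(x_i(t))$, matching the first sum. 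Combining the two identifications gives \eqref{eq:transportsourceweak} for every $f\in C_c^\infty(\R^d)$ and every $t\in[0,T]$. The computations are entirely elementary; the only points requiring attention are the $C^1$-in-time regularity justifying term-by-term differentiation and the careful bookkeeping of the $1/M$ and $1/M^q$ normalizations so that the $q$-fold product structure of $h$ aligns with the $q$-fold sum in the weight dynamics. This bookkeeping, rather than any genuine analytic difficulty, is the main thing to get right.
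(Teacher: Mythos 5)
Your proposal is correct and follows essentially the same route as the paper: differentiate $t\mapsto\frac{1}{M}\sum_i m_i(t)f(x_i(t))$ term by term, identify $\dot x_i$ with $V[\mu^N_t](x_i)$ to recover the transport term, and unwind \eqref{eq:hdef} against the $q$-fold product of the empirical measure to recover the source term from the weight equation. The only additions you make (explicit mention of $C^1$ regularity and of Proposition \ref{Prop:m}(i) for $\mu^N_t\in\PR$) are harmless and consistent with the paper.
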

\begin{proof}
We show that $\mu^N_t$ satisfies \eqref{eq:transportsourceweak}. 
Let $f\in C_c^\infty(\R^d)$.
Substituting $\mu$ by $\mu^N$ in the left-hand side of~\eqref{eq:transportsourceweak}, we obtain 
\begin{equation}\label{eq:empeq1}
\begin{split}
\frac{d}{dt}\int_{\R^d} f(x) d\mu^N_t(x) & = \frac{d}{dt} \left[ \frac{1}{M} \sum_{i=1}^N m_i(t) f(x_i(t))\right] \\
& = 
\frac{1}{M} \sum_{i=1}^N \dot m_i(t) f(x_i(t)) + \frac{1}{M} \sum_{i=1}^N m_i(t) \nabla f(x_i(t))\cdot \dot x_i(t).
\end{split}
\end{equation}
The first part of the right-hand side of \eqref{eq:transportsourceweak} gives
\begin{equation}\label{eq:empeq2}
\begin{split}
 \int_{\R^d} V[\mu_t]\cdot \nabla f(x) d\mu^N_t(x) = & \int_{\R^d} \int_{\R^d} \phi(y-x) \cdot \nabla f(x) \, d\mu^N_t(y) \, d\mu^N_t(x) \\
= & \frac{1}{M^2}  \sum_{i=1}^N \sum_{j=1}^N m_i m_j \phi(x_j-x_i)\cdot \nabla f(x_i)
=  \frac{1}{M} \sum_{i=1}^N m_i \nabla f(x_i)\cdot \dot x_i.
\end{split}
\end{equation}
where the last equality comes from the fact that $x$ is a solution to \eqref{eq:syst-micro}.
The second part of the right-hand side of \eqref{eq:transportsourceweak} gives:
\begin{equation}\label{eq:empeq3}
\begin{split}
\int_{\R^d}f(x) &dh[\mu^N_t](x) = \int_{\R^d}f(x) \left(\int_{(\R^d)^q} S(x,y_1,\cdots,y_q) d\mu^N_t(y_1)\cdots d\mu^N_t(y_q)\right) d\mu^N_t(x)\\
& = \frac{1}{M} \sum_{i=1}^N m_i f(x_i)  \frac{1}{M^q} \sum_{j_1=1}^N \cdots \sum_{j_q=1}^N m_{j_1} \cdots m_{j_q} S(x_i, x_{j_1}, \cdots x_{j_q})
= \frac{1}{M} \sum_{i=1}^N \dot m_i f(x_i).
\end{split}
\end{equation}
where the last equality comes from the fact that $m$ is a solution to \eqref{eq:syst-micro}.
Putting \eqref{eq:empeq1}, \eqref{eq:empeq2} and \eqref{eq:empeq3} together, we deduce that $\mu^N_t$ satisfies \eqref{eq:transportsourceweak}, thus it is a weak solution to \eqref{eq:transportsource}.
\end{proof}

\subsection{Convergence}

We are finally equipped to prove Theorem \ref{th:convergence}, that we state again here in its full form:

\begin{maintheorem}
Let $T>0$, $q\in \N$ and $M>0$.
For each $N\in\N$, let $(x_i^{N,0}, m^{N,0}_i)_{i\in\elts}\in (\R^d)^N\times (\R^{+*})^N$ such that $\sum_{i=1}^N m^{N,0}_i = M$.
Let $\phi\in C(\R^d;\R^d)$ satisfying Hyp. \ref{hyp:abar} and let $S\in C((\R^d)^{q+1};\R)$ satisfying Hyp. \ref{hyp:S}.
For all $t\in [0,T]$, let $t\mapsto(x^N_i(t), m^N_i(t))_{i\in\elts}$ be the solution to 
\begin{equation*}
\begin{cases}
\displaystyle \dot{x}_i =  \frac{1}{M} \sum\limits_{j=1}^N m_j \phi \left(x_j-x_i\right), \quad x_i(0)=x_i^{N,0} \\
\displaystyle \dot{m}_i = m_i\frac{1}{M^q} \sum_{j_1=1}^N \cdots \sum_{j_q=1}^N m_{j_1}\cdots m_{j_q} S(x_i, x_{j_1}, \cdots x_{j_q}), \quad m_i(0)=m_i^{N,0},
\end{cases}
\end{equation*}
 and let $\mu^N_t:= \frac{1}{M}\sum_{i=1}^N m_i^N(t) \delta_{x_i^N(t)}\in \PcR$ be the corresponding empirical measure.
Let $\mathscr{D}(\cdot,\cdot)$ denote either the Bounded Lipschitz distance $\rho(\cdot,\cdot)$ or any of the Wasserstein distances $W_p(\cdot,\cdot)$ for $p\in\N^*$.
\[ \text{If there exists } \mu_0 \in \PcR \text{ s.t. } 
\lim_{N\rightarrow\infty} \mathscr{D}(\mu^N_0,\mu_0) = 0, \quad
\text{ then for all }  t\in [0,T], \quad 
\lim_{N\rightarrow\infty} \mathscr{D}(\mu^N_t,\mu_t) = 0, 
\]
where $\mu_t$ is the solution to the transport equation with source 
\begin{equation*}
\partial_t \mt(x) + \nabla\cdot \left(\int_{\R^d} \ba(x-y) d\mu_t(y) \; \mt(x)\right) = \left(\int_{(\R^d)^q} S(x,y_1,\cdots,y_q) d\mu_t(y_1)\cdots d\mu_t(y_q)\right) \mu_t(x),
\end{equation*}
with initial data $\mu_{t=0}=\mu_0$.
\end{maintheorem}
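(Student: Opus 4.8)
The plan is to follow the classical Dobrushin-type argument, which reduces the statement to two facts already in hand. The first is \emph{consistency}: by the previous proposition, each empirical measure $\mu^N_t$ is itself a weak solution of the macroscopic equation \eqref{eq:transportsource}, now with initial datum $\mu^N_0$. The second is \emph{stability}: by Proposition \ref{Prop:conti}, two solutions of \eqref{eq:transportsource} in $C([0,T],\PcR)$ satisfy $\rho(\mu_t,\nu_t)\le e^{Ct}\rho(\mu_0,\nu_0)$. Since $\mu$ is the solution with initial datum $\mu_0$ furnished by Theorem \ref{Th:existenceuniqueness}, I would simply apply the stability estimate to the pair $(\mu^N,\mu)$ to obtain
\[
\rho(\mu^N_t,\mu_t)\le e^{Ct}\,\rho(\mu^N_0,\mu_0)
\]
for every $t\in[0,T]$; letting $N\to\infty$ and invoking the hypothesis $\rho(\mu^N_0,\mu_0)\to 0$ then yields $\rho(\mu^N_t,\mu_t)\to 0$. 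This disposes of the case $\mathscr{D}=\rho$.

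The point demanding care is that the constant $C$ in Proposition \ref{Prop:conti} is \emph{not} universal: through $M_V=\phi_{R_T}$, $L_V$ and $L_h$ it depends on the radius $R_T$ that bounds the supports of the solutions on $[0,T]$, which in turn depends (via Proposition \ref{Prop:suppmu} and Proposition \ref{Prop:existence}) on the radius $R$ bounding the initial supports. To legitimately use a single constant $C$ for all $N$ at once, I would first fix $R$ with $\supp(\mu_0)\subset B(0,R)$ and $\supp(\mu^N_0)\subset B(0,R)$ for every $N$, and then use the uniform support-propagation bound to trap every $\supp(\mu^N_t)$ and $\supp(\mu_t)$ inside a common ball $B(0,R_T)$. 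With all supports confined to $B(0,R_T)$, the constants $M_V$, $L_V$, $L_h$ are identical across $N$, so $C$ is genuinely uniform in $N$ and the displayed estimate is valid.

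To upgrade the convergence to the $p$-Wasserstein distances I would invoke Corollary \ref{Col:W1} directly: on the common ball $B(0,R_T)$ the Bounded Lipschitz and $W_1$ distances are equivalent (Proposition \ref{Prop:WassleqBL}), and $W_p$ is controlled by a power of $W_1$ (Proposition \ref{Prop:WassqleqWass1}), giving
\[
W_p(\mu^N_t,\mu_t)\le (2R)^{\frac{p-1}{p}}\,C_{R_T}^{\frac1p}\,e^{\frac{C}{p}t}\,W_p(\mu^N_0,\mu_0)^{\frac1p}\xrightarrow[N\to\infty]{}0,
\]
which covers $\mathscr{D}=W_p$ and finishes the argument.

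I expect the main obstacle to be precisely the uniformity of $C$ discussed above. The hypothesis only asserts $\mathscr{D}(\mu^N_0,\mu_0)\to 0$, and convergence in the Bounded Lipschitz or even the Wasserstein metric does not by itself force the initial supports $\supp(\mu^N_0)$ to lie in a common ball (it controls moments, not supports). One must therefore verify that, together with the compactness of $\supp(\mu_0)$, a uniform initial radius $R$ is genuinely available — otherwise the factor $e^{Ct}$ could degenerate as $N$ grows and the stability estimate would fail to close. Establishing this uniform support bound is the one genuinely technical step; everything else is a direct assembly of the consistency and stability results already proven.
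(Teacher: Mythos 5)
Your proposal is correct and follows essentially the same route as the paper: consistency of the empirical measure (it is a weak solution of \eqref{eq:transportsource}) combined with the stability estimate of Proposition \ref{Prop:conti} gives the case $\mathscr{D}=\rho$, and Corollary \ref{Col:W1} upgrades this to $W_p$ via the equivalence of the metrics on a common ball. The uniformity issue you flag is genuine --- the paper silently posits a single radius $R$ with $\supp(\mu_0)\cup\supp(\mu^N_0)\subset B(0,R)$ for all $N$, which does not follow from $\mathscr{D}(\mu^N_0,\mu_0)\to 0$ alone and is really an implicit additional hypothesis (automatically satisfied, e.g., for the gridded initial data used in the numerical section).
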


\begin{proof}
Since $\mu_t^N$ and $\mu_t$ are both weak solutions to \eqref{eq:transportsource}, from Prop. \ref{Prop:conti}, there exists $C>0$ such that
\[
\rho(\mu_t^N,\mu_t)\leq e^{Ct} \rho(\mu_0^N,\mu_0)
\]
and the result follows immediately for $\mathscr{D}=\rho$.\\
Let $R<0$ such that $\supp(\mu_0)\cup \supp(\mu_0^N)\subset B(0,R)$ for all $N\in\N$. From Corollary \ref{Col:W1}, there exists $C_{R_T}>0$ depending on $T$ and $R$ such that 
for all $p\in\N^*$,
\[
W_p(\mu_t^N,\mu_t) \leq (2R)^{\frac{p-1}{p}} C_{R_T}^{\frac{1}{p}} e^{\frac{C}{p}t} W_p(\mu_0^N,\mu_0)^{\frac{1}{p}} 
\]
and the result follows for $\mathscr{D}=W_p$.
\end{proof}

\section{Numerical simulations}

To illustrate our convergence result, we provide numerical simulations for a specific model. We also refer the reader to the paper \cite{AyiPouradierDuteil20} for numerical simulations with a different model.

We recall the first model (M1) proposed in \cite{McQuadePiccoliPouradierDuteil19}, ``increasing weight by pairwise competition'':
\begin{equation*}
\begin{cases}
\displaystyle \dot{x}_i(t) = \frac{1}{M} \sum\limits_{j=1}^N m_j(t) \ba(x_j(t)-x_i(t)), \quad x_i(0)=x_i^0 \\
\displaystyle \dot{m}_i(t) = \frac{1}{M} m_i(t) \sum_{j=1}^N m_j(t) \beta \langle \frac{\dot x_i(t)+\dot x_j(t)}{2}, u_{ji}\rangle, \quad m_i(0)=m_i^0
\end{cases}
\end{equation*}
where $u_{ji}$ is the unit vector in the direction $x_i-x_j$ and $\beta$ is a constant.
 
With this choice of model, the evolution of each agent's weight depends on the dynamics of the midpoints $(x_i+x_j)/2$ between $x_i$ and each other agent at position $x_j$.
More specifically, if the midpoint $(x_i+x_j)/2$ moves in the direction of $x_i$, i.e. $\langle \frac{\dot x_i+\dot x_j}{2}, u_{ji}\rangle >0$, then the weight $m_i$ increases proportionally to $m_j$. If, on the other hand, $(x_i+x_j)/2$ moves away from $x_i$ and towards $x_j$, the weight $m_i$ decreases by the same proportion. 

In order to ensure continuity, we slightly modify the model and replace $u_{ji}$ by a function $h(x_i-x_j)$, where $h\in \Lip(\R^d;\R^d)$ is non-decreasing and satisfies the following properties: 
\begin{itemize}
\item $h(y) = \tilde{h}(|y|) y$ for some $\tilde h\in C(\R^+;\R^+)$
\item $\lim_{|y|\rightarrow\infty} h(y) = \frac{y}{|y|}$
\end{itemize}

Then, by replacing $\dot x_i$ and $\dot x_j$ by their expressions, the system can be written as: 
\begin{equation}\label{eq:microexample}
\begin{cases}
\displaystyle \dot{x}_i = \frac{1}{M} \sum\limits_{j=1}^N m_j \ba(x_j-x_i), \quad x_i(0)=x_i^0 \\
\displaystyle \dot{m}_i = \frac{1}{M^2} m_i \sum_{j=1}^N \sum_{k=1}^N m_j m_k \; \beta \; \langle \frac{\ba(x_k-x_i)+\ba(x_k-x_j)}{2}, h(x_i-x_j)\rangle, \quad m_i(0)=m_i^0.
\end{cases}
\end{equation}
Notice that it is in the form of System \eqref{eq:syst-micro}, with $q = 2$ and $S\in C((\R^d)^3;\R)$ defined by  
\[ S(x,y,z) = \beta \; \langle \frac{\ba(z-x)+\ba(z-y)}{2}, h(x-y)\rangle. \]
One easily sees that $S(x,y,z) = -S(y,x,z)$, thus $S$ satisfies \eqref{eq:condS}.
Furthermore, for every $R_T>0$, there exists $\bS$ such that for all $x,y,z\in B(0,R_T)$, $S(x,y,z)\leq \bS$, hence condition \eqref{eq:psiSkbound} is satisfied in a relaxed form.
Lastly, it is simple to check that as long as $\phi\in \Lip(\R^d; \R^d)$, $S\in \Lip((\R^d)^3;\R)$ thus $S$ satisfies \eqref{eq:psiSklip}.

We can then apply Theorem \ref{th:convergence}.

Consider $\mu_0\in\P(\R)$. For simplicity purposes, for the numerical simulations we take $\mu_0$ supported on $[0,1]$ and absolutely continuous with respect to the Lebesgue measure.
For a given $N\in\N$, we define: 
\[
x_i^{N,0} := \frac{i}{N}, \qquad m_i^{N,0} := \int_{\frac{i-1}{N}}^{\frac{i}{N}} d\mu_0, \qquad \qquad i\in\elts.
\]
We then have convergence of the empirical measures $\mu_0^N$ to $\mu_0$ when $N$ goes to infinity.
According to Theorem \ref{th:convergence}, for all $t\in [0,T]$, $\mu_t^N \rightharpoonup \mu_t $, where $\mu_t$ is the solution to the transport equation with source
\begin{equation}\label{eq:MFexample}
\partial_t \mu_t(x) + \partial_x \left( \int_{\R} \phi(y-x) d\mu_t(y) \; \mu_t(x)\right) = \left( \int_{\R^2} S(x,y,z) d\mu_t(y)d\mu_t(z) \right) \mu_t(x)
\end{equation}

Figures \ref{Fig:Pos} and \ref{Fig:Mean-field} illustrate this convergence for the specific choices : 
\begin{itemize}
\item $\phi:=\phi_{0.2}$, where for all $R>0$,
\[
\phi_R:\delta\mapsto 
\left\{
\begin{array}{ll} 
\frac{\delta}{|\delta|} \sin^2(\frac{\pi}{R}|\delta|) \quad & \text{ if } |\delta|\leq R  \\
0 \quad & \text{ if } |\delta|> R
\end{array}
\right.
\]
\item $h:\delta\mapsto \arctan(|\delta|) \frac{\delta}{|\delta|}$
\item $\beta := 100$
\item $M:=N$.
\item $d\mu_0(x):= \frac{f(x)}{F} dx$, with $f(x) := [\frac{3.5}{\sqrt{0.4\pi}}\exp(-\frac{5(x-0.25)^2}{4})+\frac{1}{\sqrt{0.4\pi}}\exp(-\frac{5(x-0.90)^2}{4})]\mathds{1}_{[0,1]}(x)$ and $F:= \int_\R f(x)dx$.
\end{itemize}

Figure \ref{Fig:Pos} shows the evolution of $t\mapsto (x_i^N(t))_{i\in\elts}$ and of $t\mapsto (m_i^N(t))_{i\in\elts}$ for $N=20$, $N=50$ and $N=100$. Due to the fact that the interaction function $\phi$ has compact support, we observe formation of clusters within the population. Note that as expected, the final number and positions of clusters are the same for all values of $N$ ($N$ big enough).
Within each cluster, the agents that are able to attract more agents gain influence (i.e. weight), while the followers tend to lose influence (weight).

Figure \ref{Fig:Mean-field} compares the evolutions of $t\mapsto \mu_t$ and $t\mapsto \mu^N_t$ at four different times. 
For visualization, the empirical measure was represented by the piece-wise constant counting measure $\tilde\mu^N_t$ defined by:
for all $x\in E_j$, $\tilde\mu^N_t(x) = \frac{p}{M} \sum_{i=1}^N m_i \mathds{1}\{x_i\in E_j\}$, where for each $j\in \{1,\cdots,p\}$, $E_j = [\frac{j-1}{p},\frac{j}{p})$, so that $(E_j)_{j\in\{1,\cdots,p\}}$ is a partition of $[0,1]$.
In Fig. \ref{Fig:Mean-field}, $p=41$.
We observe a good correspondence between the two solutions at all four time steps. 
Observe that the four clusters are formed at the same locations than in Figure \ref{Fig:Pos}, i.e. at $x=0.07$, $x=0.33$, $x=66$ and $x=0.9$. Convergence to the first and fourth clusters is slower than convergence to the second and third, due to the differences in the total weight of each cluster.

\begin{figure}
\centering
\includegraphics[trim = 1.1cm 0cm 1.4cm 0cm, clip=true, width=0.32\textwidth]{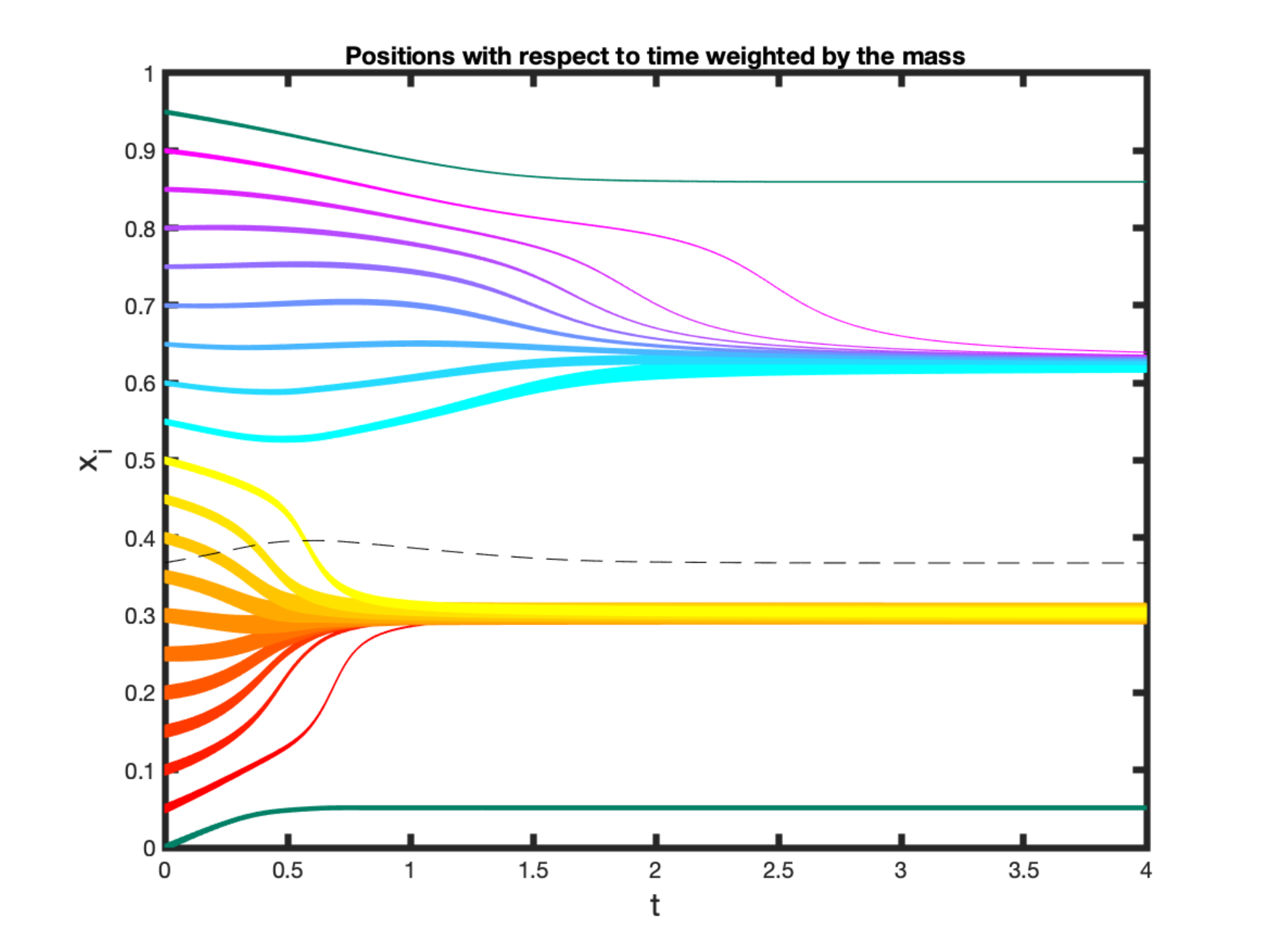}
\includegraphics[trim = 1.1cm 0cm 1.4cm 0cm, clip=true, width=0.32\textwidth]{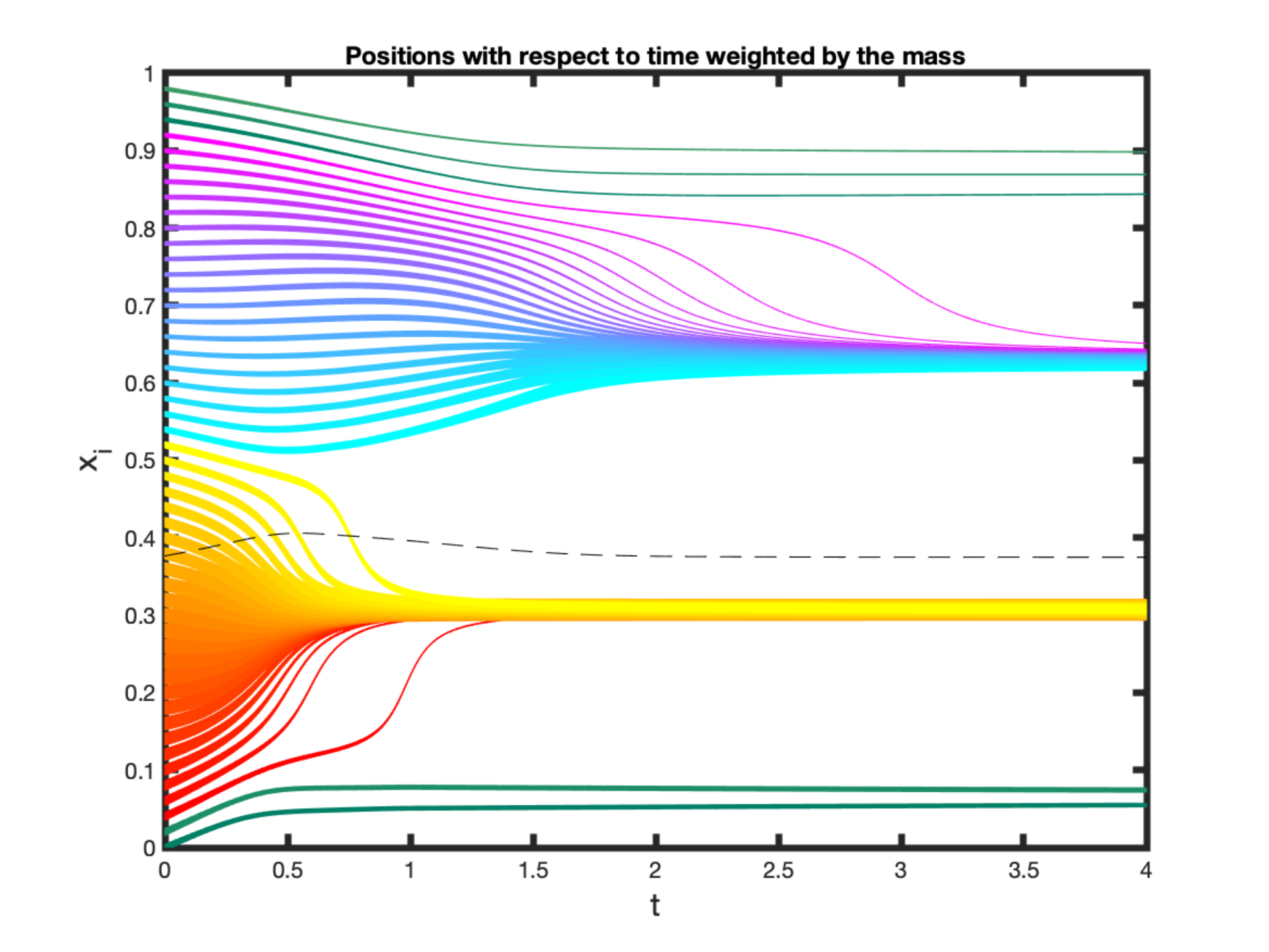}
\includegraphics[trim = 1.1cm 0cm 1.4cm 0cm, clip=true, width=0.32\textwidth]{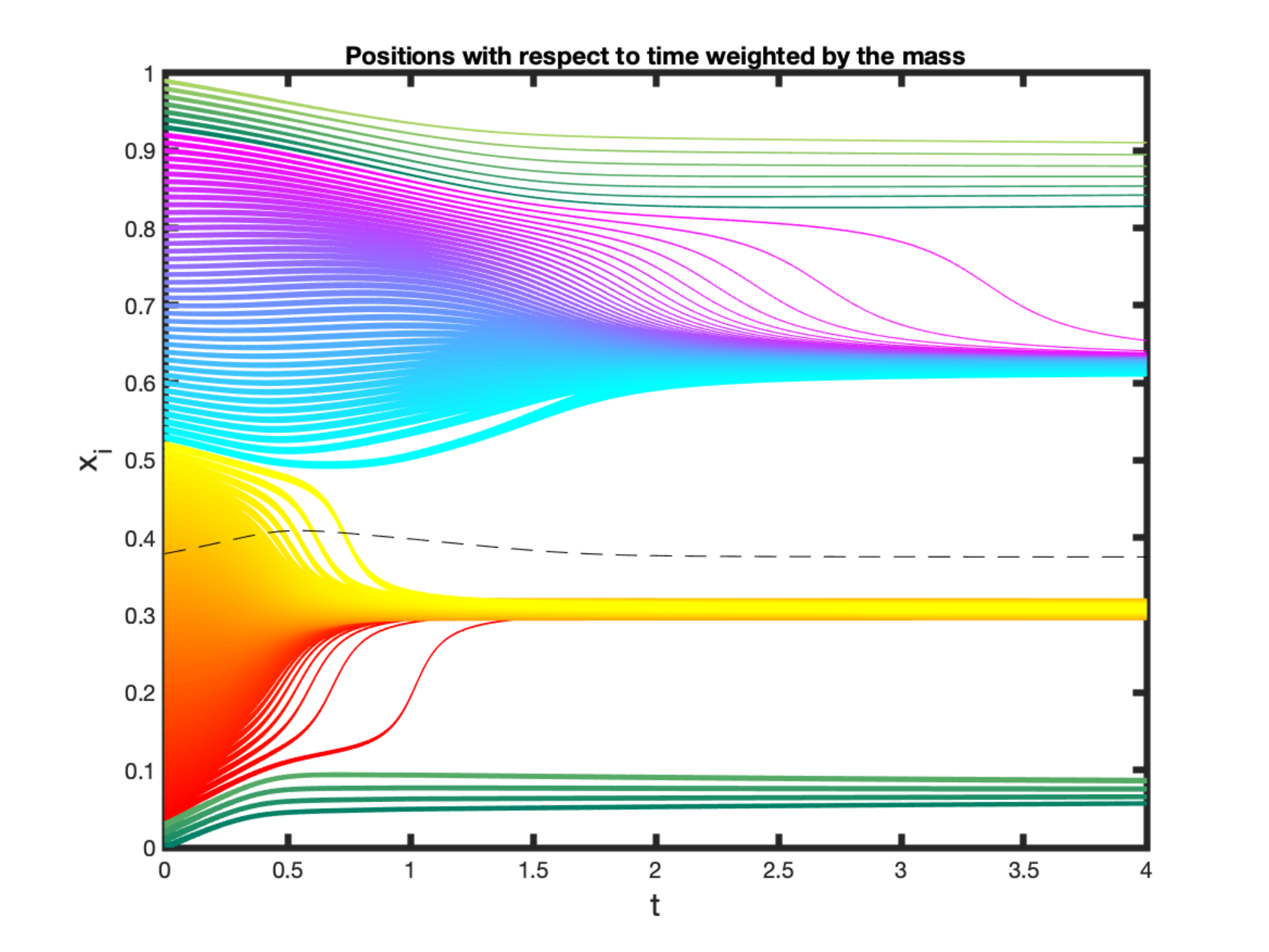}\\
\includegraphics[trim = 1.2cm 0.3cm 1.4cm 0cm, clip=true, width=0.32\textwidth]{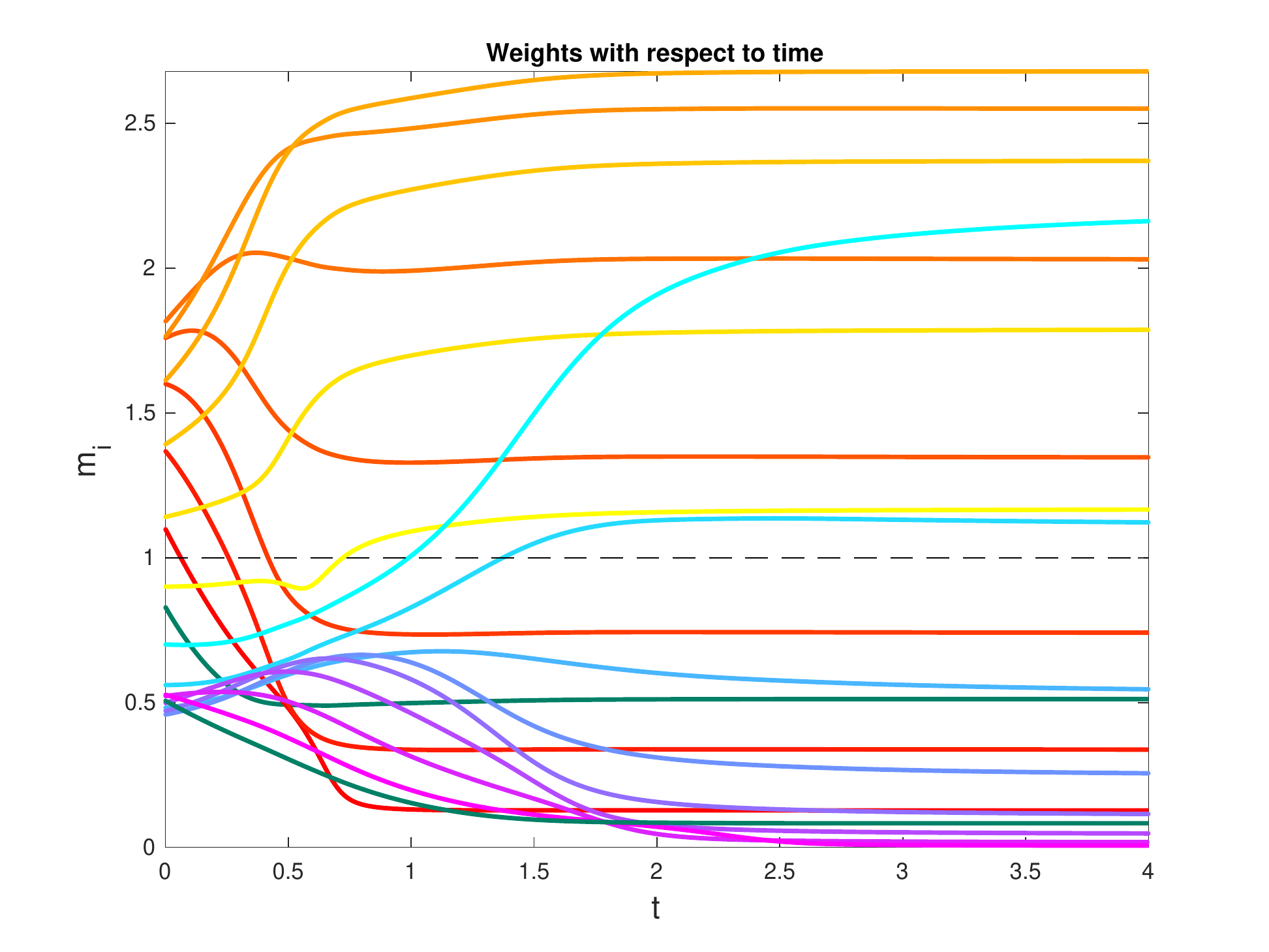}
\includegraphics[trim = 1.2cm 0.3cm 1.4cm 0cm, clip=true, width=0.32\textwidth]{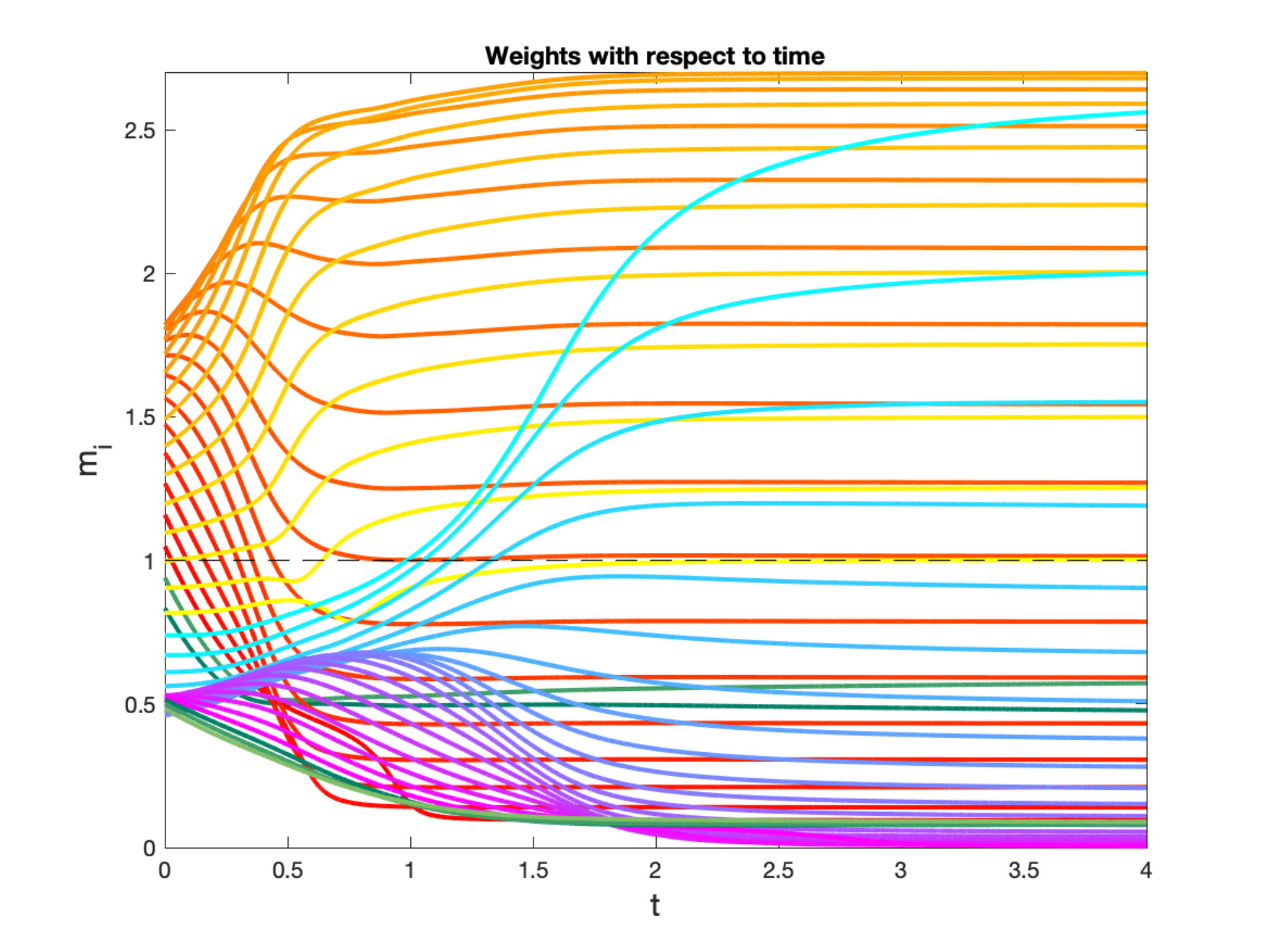}
\includegraphics[trim = 1.2cm 0.3cm 1.4cm 0cm, clip=true, width=0.32\textwidth]{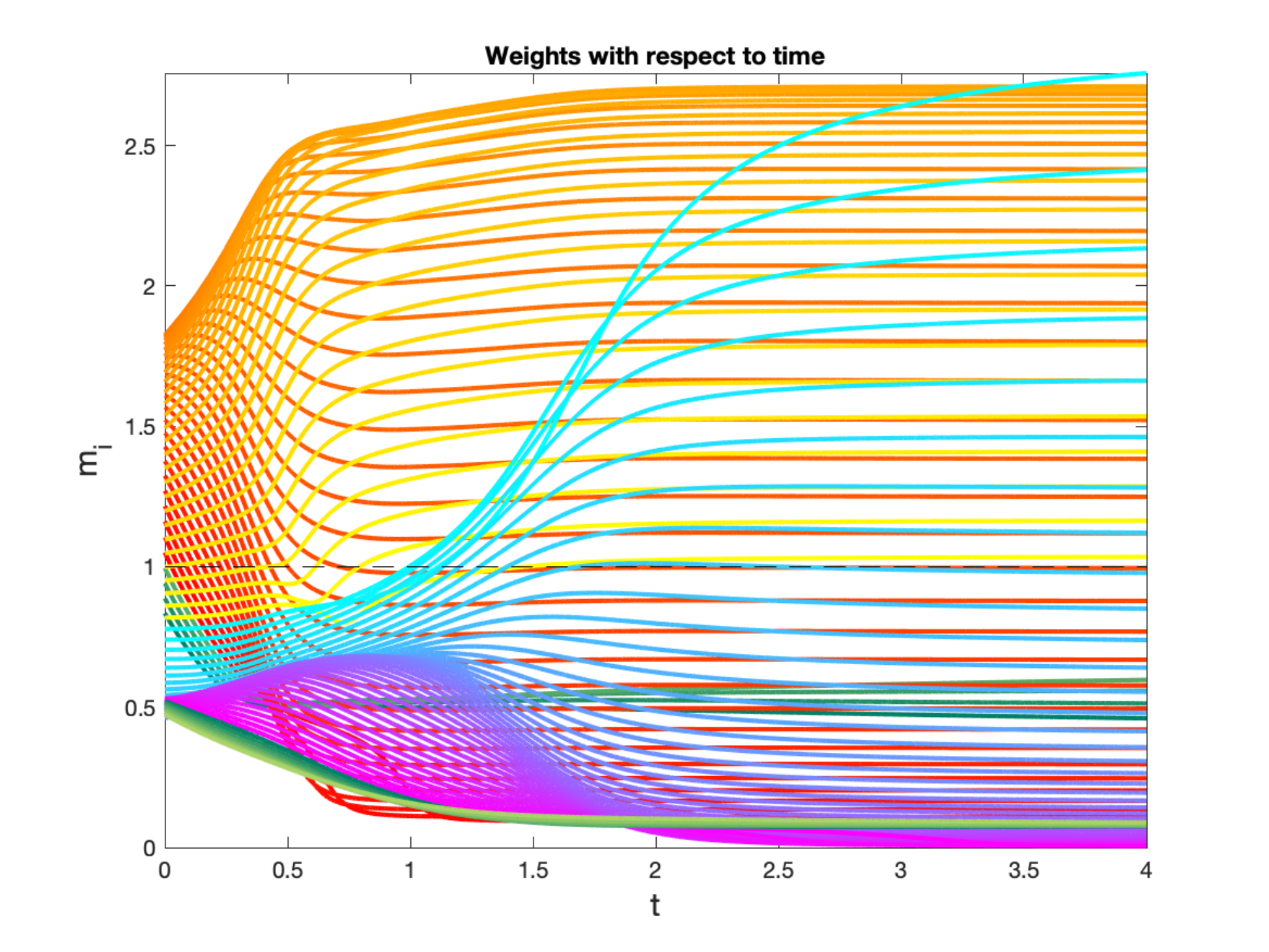}
\caption{Top row: Evolution of the positions for $N=20$, $N=50$ and $N=100$. The thickness of the lines is proportional to the agent's weight.
The dotted line represents the barycenter $\bar x:= \frac{1}{M}\sum_{i} m_i x_i$.
Bottom row: Evolution of the weights for $N=20$, $N=50$ and $N=100$. The dotted line represents the average weight $\bar m := \frac{1}{M}\sum_{i} m_i$.}
\label{Fig:Pos}
\end{figure}


\begin{figure}
\centering
\includegraphics[trim = 2.2cm 0.5cm 1.8cm 0cm, clip=true, width=0.34\textwidth]{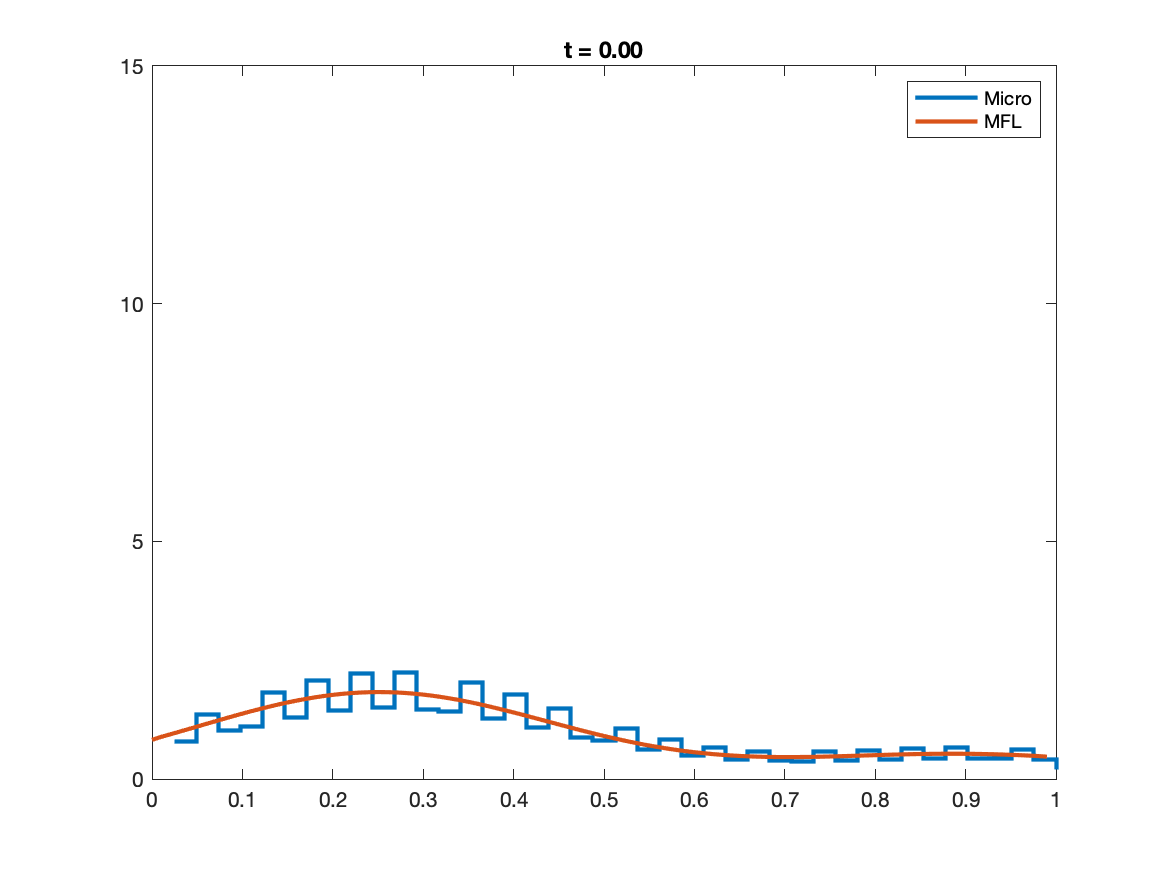}
\includegraphics[trim = 2.2cm 0.5cm 1.8cm 0cm, clip=true, width=0.34\textwidth]{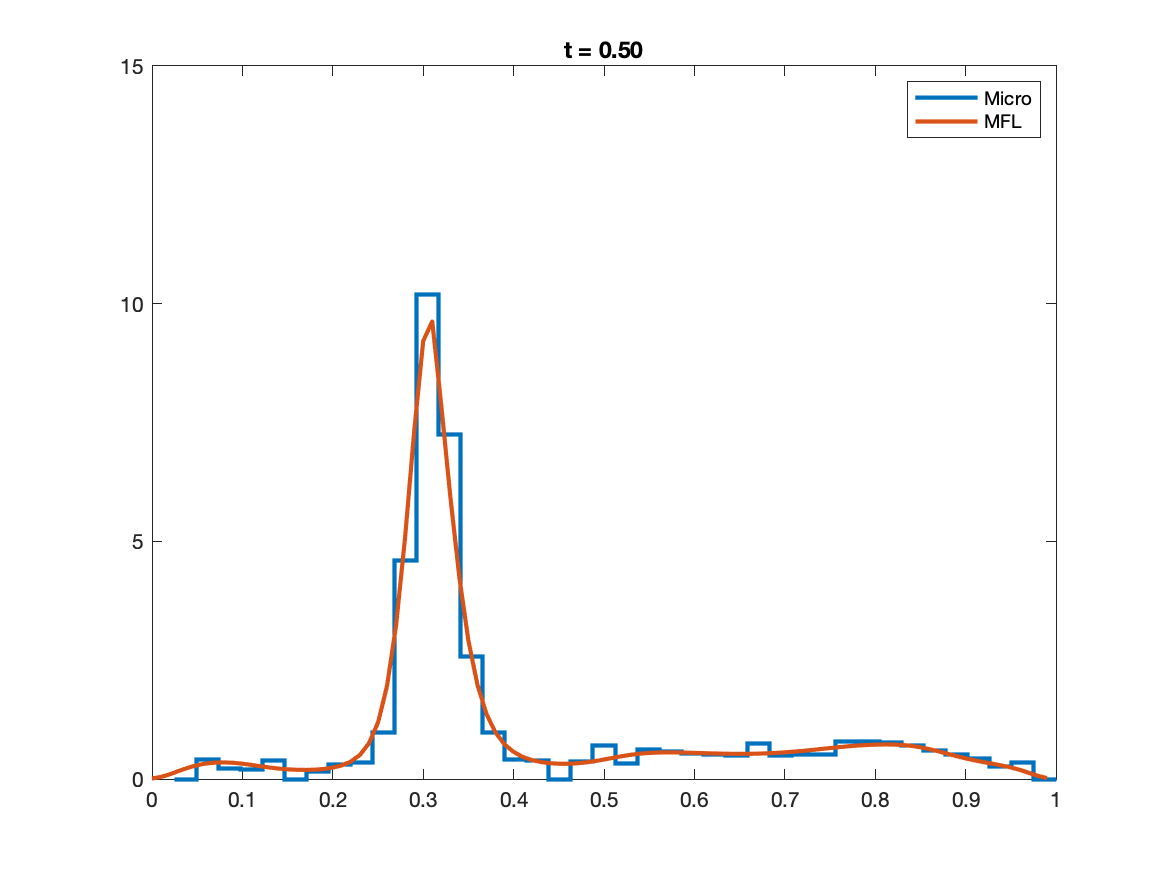}\\
\includegraphics[trim = 2.2cm 0.5cm 1.8cm 0cm, clip=true, width=0.34\textwidth]{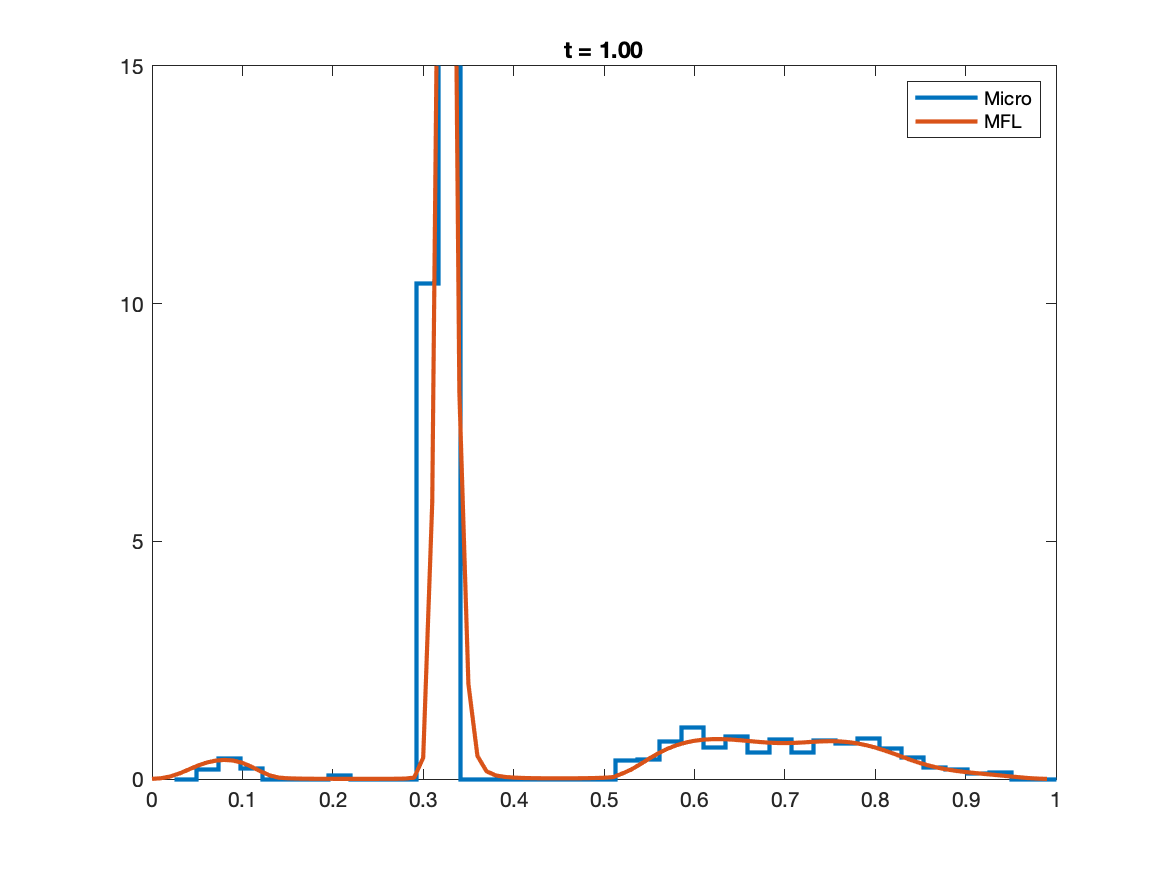}
\includegraphics[trim = 2.2cm 0.5cm 1.8cm 0cm, clip=true, width=0.34\textwidth]{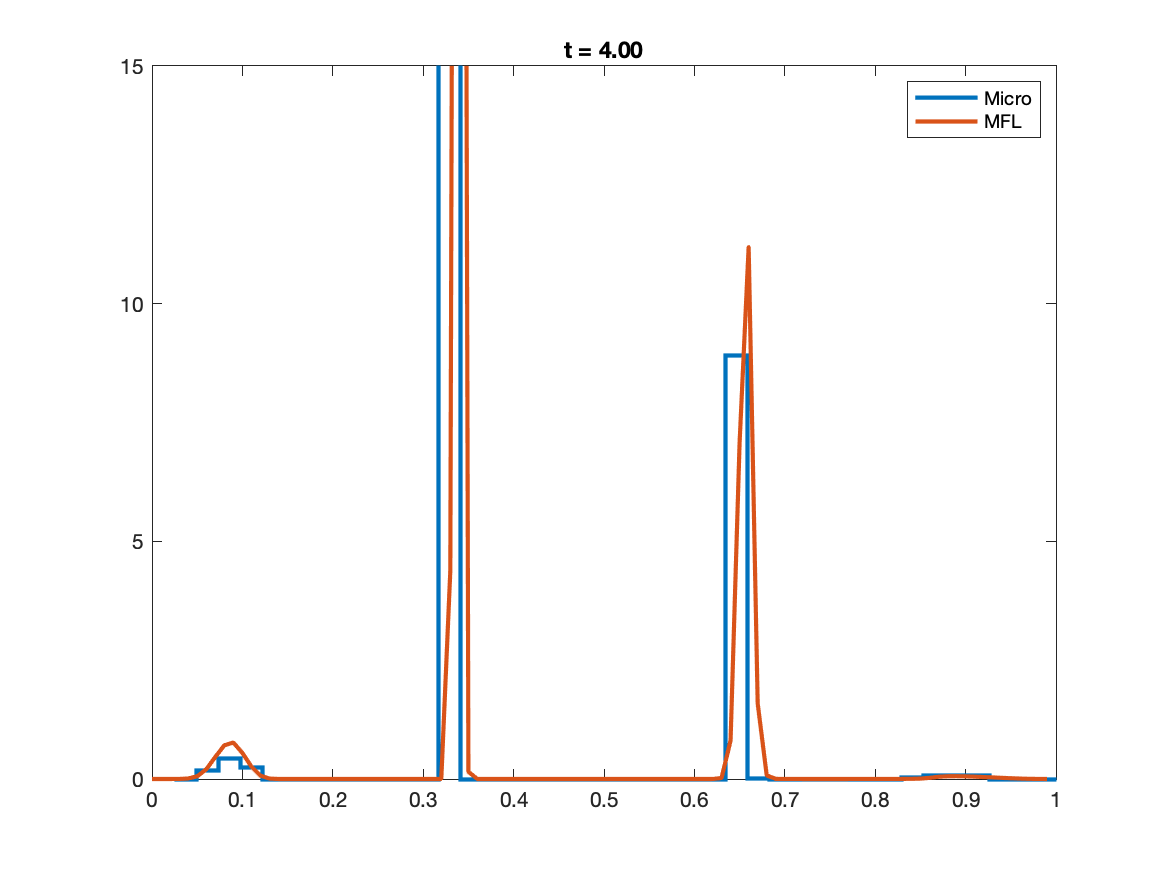}
\caption{Comparison of $\mu_t$ (in red), solution to the macroscopic model \eqref{eq:MFexample} and $\tilde{\mu}^N_t$ (in blue), counting measure corresponding to the solution to the microscopic model \eqref{eq:microexample} for $N=100$.} \label{Fig:Mean-field}
\end{figure}

\ifAppendix

\newpage
\appendix
\section{Appendix}
\subsection{Well-Posedness of the microscopic model}\label{Sec:ExUniqMicro}

We provide the proof of Proposition \ref{Prop:WellPosmicro}.
It is modeled after the proof of existence and uniqueness of the Graph Limit equation provided in \cite{AyiPouradierDuteil20}, but we write it fully here for self-containedness.

\begin{proof}
Let $\tx=(\tx_i)_{i\in\elts}\in C([0,T];(\R^d)^N)$ and $\tm=(\tm_i)_{i\in\elts}\in C([0,T];\R_+^N)$.
Consider the following decoupled systems of ODE:
\begin{equation}\label{eq:x-decoupled}
 \dot{x}_i(t) = \frac{1}{M} \sum\limits_{j=1}^N \tm_j(t) \ba(x_j(t)-x_i(t)), \quad x_i(0)=\xinit_i 
\end{equation}
and
\begin{equation}\label{eq:m-decoupled}
\displaystyle \dot{m}_i(t) = m_i(t)\frac{1}{M^q} \sum_{j_1=1}^N \cdots \sum_{j_q=1}^N m_{j_1}(t)\cdots m_{j_q}(t) S(\tx_i(t), \tx_{j_1}(t), \cdots \tx_{j_q}(t)), \quad m_i(0)=\minit_i.
\end{equation}

We begin by proving that there exists a unique solution to the Cauchy problem given by \eqref{eq:x-decoupled}.
Let $\tT>0$ and
let $\X:= \{x\in  C([0,\tT];(\R^d)^N)\; | \; x(t=0)=\xinit \}$. Consider the application
\[
\begin{array}{cccc}
\Kx : & \X & \rightarrow & \X \\
 & x & \mapsto & \Kx x
\end{array}
\]
where for all $t\in [0,\tT]$ and $i\in\elts$, $(\Kx x)_i(t) := \displaystyle \xinit_i + \int_0^t \frac{1}{M} \sumj \tm_j(\tau) \phi(x_j(\tau)-x_i(\tau)) d\tau$.

Let us  show that $\Kx$ is contracting for the norm $\|x\|_X :=   \sup_{t\in [0,\tT]} \max_{i\in\elts} \|x_i(t)\|$.
Let $x,y\in \X$. Then for all $t\in [0,\tT]$ and $i\in\elts$,
\[
\begin{split}
\|(\Kx x)_i - \Kx y)_i\| & = \left \| \int_0^t \frac{1}{M} \sumj \tm_j(\tau) \left[ \phi(x_j(\tau)-x_i(\tau))-\phi(y_j(\tau)-y_i(\tau))\right] d\tau \right \| \\
& \leq \int_0^t \frac{1}{M} \sumj \tm_j(\tau) L_\phi \left\| (x_j(\tau)-x_i(\tau))-(y_j(\tau)-y_i(\tau))\right\| d\tau \\
& \leq \int_0^t \frac{L_\phi}{M} \sumj \tm_j(\tau) (\| (x_j(\tau)-y_j(\tau)\|+ \|x_i(\tau)-y_i(\tau)\|) d\tau \\
& \leq  2\frac{L_\phi}{M} \int_0^t  \sumj \tm_j(\tau)  d\tau \|x-y\|_X \leq 2\frac{L_\phi}{M} \tT  \sup_{t\in[0, \tT]} ( \sumj \tm_j(t) ) \|x-y\|_X .
\end{split}
\]
Since $\tm$ is given, choosing
$\tT \leq [4 \frac{L_\phi}{M} \sup_{t\in[0, \tT]}  \sum_{j=1}^N \tm_j(t) ]^{-1}$ ensures that $\Kx$ is contracting on $[0,\tT]$.
By the Banach contraction principle, there exists a unique solution $x\in C([0,\tT],(\R^d)^N)$. We then take $x(\tT)$ as the initial condition, and the local solution can be extended to $[0,2\tT]$, and by repeating the same argument, to $[0,T]$.
Moreover, since the integrand is continuous as a map $[0,T]\rightarrow (\R^d)^N$, $x$ is continuously differentiable and $x\in C^1([0,T],(\R^d)^N)$.

We now show existence and uniqueness of the solution to the second decoupled system \eqref{eq:m-decoupled}. Let $m^0\in \R^N_+$ such that $\sum_{i=1}^N \minit_i = M$. 
Let $\M:= \{m\in C([0,\tT],\R_+^N) \; | \; m(t=0) =  \minit \text{ and } \sum_{i=1}^N m_i \equiv M\}$.
Consider the application
\[
\begin{array}{cccc}
\Km : & \M & \rightarrow & \M \\
 & m & \mapsto & \Km m
\end{array}
\]
where for all $t\in [0,\tT]$ and $i\in\elts$,
\[(\Km m)_i(t) := \displaystyle m_i^0 + \int_0^t m_i(\tau) \frac{1}{M^q} \sum_{j_1=1}^N\cdots \sum_{j_q=1}^N m_{j_1}(\tau)  \cdots m_{j_q}(\tau)  S(\tx_i(\tau), \tx_{j_1}(\tau), \cdots \tx_{j_q}(\tau))d\tau. \]
We show that $\Km$ is contracting for the norm $\|m\|_{\M}:=\frac{1}{M} \sup_{t\in [0,\tT]} \sum_{i=1}^N |m_i(t)|$.
Let $m,p\in \M$. It holds:
\begin{equation*}
\begin{split}
|(\Km m - \Km p)_i| =  &\left| \int_0^t \frac{1}{M^q} \left[ m_i   \sum_{j_1\cdots j_q} m_{j_1}  \cdots m_{j_q} - p_i  \sum_{j_1\cdots j_q} p_{j_1}  \cdots p_{j_q} \right]  S(\tx_i, \tx_{j_1}, \cdots \tx_{j_q})d\tau \right| \\
 \leq & \int_0^t \frac{1}{M^q} |m_i-p_i|   \sum_{j_1\cdots j_q} m_{j_1}  \cdots m_{j_q}  |S(\tx_i, \tx_{j_1}, \cdots \tx_{j_q})| d\tau \\
 & + \int_0^t \frac{1}{M^q} p_i   \sum_{j_1\cdots j_q} | m_{j_1}-p_{j_1}| m_{j_2}  \cdots m_{j_q}  |S(\tx_i, \tx_{j_1}, \cdots \tx_{j_q})| d\tau \\
 & +\cdots + \int_0^t \frac{1}{M^q} p_i   \sum_{j_1\cdots j_q} p_{j_1} \cdots p_{j_{q-1}}  | m_{j_q}-p_{j_q}|  |S(\tx_i, \tx_{j_1}, \cdots\tx_{j_q})| d\tau  \\
 \leq & \; \bS \tT \sup_{[0,\tT]} |m_i-p_i| + q\, \bS \tT \frac{1}{M} \sup_{[0,\tT]} ( p_i \sum_{j=1}^N |m_j-p_j| ) \\
 \leq & \; \bS \tT \sup_{[0,\tT]} |m_i-p_i| + q\, \bS \tT  \sup_{[0,\tT]} \sum_{j=1}^N |m_j-p_j| .
\end{split}
\end{equation*}
Thus,
$\|\Km m - \Km p\|_{\M} \leq (q+1)\,\bS \tT \|m-p\|_{\M}$.
Taking $\tT\leq \frac{1}{2} ((q-1) \bS)^{-1}$, the operator $\Km$ is contracting. By the same reasoning as previously, there is a unique solution $m\in C^1([0,T],\R_+^N)$ to \eqref{eq:m-decoupled}.

Let us define the sequences $(x^n){n\in\N}$ and $(m^n){n\in\N}$ by 
\begin{itemize}
\item For all $t\in [0,T]$, $m^0(t)=\minit$ and $x^0(t)=\xinit$
\item For all $n \geq 1$, $x^n$ and $m^n$ are solutions to the system of ODE
\begin{equation*}
\begin{cases}
\displaystyle \dot{x}^n_i = \frac{1}{M} \sum\limits_{j=1}^N m^{n-1}_j \ba(x^n_j-x^n_i), \quad x_i^n(0)=\xinit_i  \\
\displaystyle \dot{m}^n_i = m_i^n\frac{1}{M^q} \sum_{j_1=1}^N \cdots \sum_{j_q=1}^N m_{j_1}^n\cdots m_{j_q}^n S(x_i^{n-1}, x_{j_1}^{n-1}, \cdots x_{j_q}^{n-1}), \quad m_i^n(0)=\minit_i
\end{cases}
\end{equation*}
\end{itemize}
The results obtained above ensure that the sequences are well defined and that for all $n\in\N$, $(x^n,m^n)\in C([0,T];(\R^d)^N\times\R_+^N)$.
We begin by showing that $x^n$ and $m^n$ are bounded in $L^\infty$ norm independently of $n$.
It holds:
\[
\begin{split}
|m_i^n(t)| & = \left| \minit_i + \int_0^t m_i^n(\tau)\frac{1}{M^q} \sum_{j_q=1}^N m_{j_1}^n(\tau)\cdots m_{j_q}^n(\tau) S(x_i^{n-1}(\tau), x_{j_1}^{n-1}(\tau), \cdots x_{j_q}^{n-1}(\tau)) d\tau \right|\\
& \leq |\minit_i| + \bS \int_0^t |m_i^n(\tau)| d\tau.
\end{split}
\]
From Gronwall's lemma, for all $t\in [0,T]$, $|m_i^n(t)|\leq \minit_i e^{\bS t} \leq M_T$
where $M_T:= \max_{i\in\elts} \minit_i e^{\bS T}$.

Similarly, notice that for all $z\in\R^d$, $\|\phi(z)\|\leq \Phi_0 + L_\phi \|z\|$,where $\Phi_0=\phi(0)$.
Then
\[
\begin{split}
\|x^n_i(t)\| = & \left\| \xinit_i + \frac{1}{M} \int_0^t \sum\limits_{j=1}^N m^{n-1}_j(\tau) \ba(x^n_j(\tau)-x^n_i(\tau)) d\tau \right\| \\
\leq & \|\xinit_i\| +  \frac{1}{M} \int_0^t \sum\limits_{j=1}^N m^{n-1}_j(\tau) (\Phi_0+ L_\phi \|x^n_j(\tau)-x^n_i(\tau)\|) d\tau  \\
\leq & \|\xinit_i\| +  \frac{M_T}{M} \int_0^t \sum\limits_{j=1}^N  (\Phi_0+ 2L_\phi \max_{i\in\elts}\|x^n_i(\tau)\|) d\tau .
\end{split}
\]
Thus
\[
\max_{i\in\elts} \|x_i^n(t)\|\leq  \max_{i\in\elts} \|\xinit_i \| + \frac{M_T}{M} ( \Phi_0 t + 2 L_\phi \int_0^t \max_{i\in\elts} \|x_i^n(\tau)\| d\tau)
\]
and from Gronwall's lemma, for all $t\in [0,T]$,
\[
\max_{i\in\elts} \|x_i^n(t)\| \leq X_T:= \left[ \max_{i\in\elts}\|\xinit_i\|  + \frac{M_T}{M} \Phi_0 T\right] e^{2 L_\phi \frac{M_T}{M} T}
\]
We prove that $(x^n)_{n\in\N}$ and $(m^n)_{n\in\N}$ are Cauchy sequences. For all $n\in\N$,
\[
\begin{split}
& \|x_i^{n+1}-x_i^n\| =   \left\| \int_0^t\frac{1}{M} \sum_{j=1}^N m_j^n\phi(x_j^{n+1}-x_i^{n+1}) d\tau - \int_0^t\frac{1}{M} \sum_{j=1}^N m_j^{n-1}\phi(x_j^{n}-x_i^{n}) d\tau  \right\|  \\
& =  \left\| \int_0^t\frac{1}{M} \sum_{j=1}^N (m_j^n-m_j^{n-1})\phi(x_j^{n+1}-x_i^{n+1}) d\tau + \int_0^t\frac{1}{M} \sum_{j=1}^N m_j^{n-1}[\phi(x_j^{n+1}-x_i^{n+1})-\phi(x_j^{n}-x_i^{n})] d\tau  \right\| \\
& \leq \int_0^t \frac{1}{M} \sum_{j=1}^N |m_j^n-m_j^{n-1}| (\Phi_0+ L_\phi \|x_j^{n+1}-x_i^{n+1}\|) d\tau
+ \int_0^t\frac{M_T L_\phi}{M} \sum_{j=1}^N \| x_j^{n+1}-x_j^{n}+x_i^{n+1}-x_i^{n}\| d\tau  \\
& \leq \frac{1}{M} (\Phi_0+ 2 L_\phi X_T) \int_0^t \sum_{j=1}^N |m_j^n-m_j^{n-1}| d\tau
+ \frac{M_T L_\phi}{M} \int_0^t \sum_{j=1}^N (\| x_j^{n+1}-x_j^{n}\|+\|x_i^{n+1}-x_i^{n}\|) d\tau
\end{split}
\]
Thus
\[
\sum_{i=1}^N \|x_i^{n+1}-x_i^n\| \leq 
 \frac{N}{M} (\Phi_0+ 2 L_\phi X_T) \int_0^t \sum_{i=1}^N |m_i^n-m_i^{n-1}| d\tau
+ 2N \frac{M_T L_\phi}{M} \int_0^t \sum_{i=1}^N \| x_i^{n+1}-x_i^{n}\| d\tau.
\]

A similar computation, for $m$ gives
\[
\begin{split}
|m_i^{n+1}-m_i^n| = & \bigg | \int_0^t m_i^{n+1} \frac{1}{M^q} \sum_{j_1\cdots j_q} m_{j_1}^{n+1}  \cdots m_{j_q}^{n+1}  S(x_i^{n}, x_{j_1}^{n},  \cdots x_{j_q}^{n})d\tau \\
& - \int_0^t m_i^n \frac{1}{M^q} \sum_{j_1\cdots j_q} m_{j_1}^n  \cdots m_{j_q}^n  S(x_i^{n-1}, x_{j_1}^{n-1}, 
\cdots x_{j_q}^{n-1})d\tau \bigg |\\
\leq &
\int_0^t |m_i^{n+1}-m_i^{n}| \frac{1}{M^q} \sum_{j_1\cdots j_q} m_{j_1}^{n+1}  \cdots m_{j_q}^{n+1}  S(x_i^{n}  \cdots x_{j_q}^{n})d\tau \\
& + \int_0^t m_i^{n} \frac{1}{M^q} \sum_{j_1\cdots j_q} |m_{j_1}^{n+1}-m_{j_1}^{n}| m_{j_2}^{n+1} \cdots m_{j_q}^{n+1}  S(x_i^{n}  \cdots x_{j_q}^{n})d\tau \\
& + \cdots + \int_0^t m_i^{n} \frac{1}{M^q} \sum_{j_1\cdots j_q} m_{j_1}^{n} \cdots m_{j_{q-1}}^{n}  |m_{j_q}^{n+1}-m_{j_q}^{n}|  S(x_i^{n}  \cdots x_{j_q}^{n})d\tau  \\
& + \int_0^t m_i^{n} \frac{1}{M^q} \sum_{j_1\cdots j_q} m_{j_1}^{n} \cdots m_{j_q}^{n} | S(x_i^{n}  \cdots x_{j_q}^{n}) - S(x_i^{n-1}  \cdots x_{j_q}^{n-1})| d\tau \\
\end{split}
\]
From \eqref{eq:psiSklip}, it holds
\[
\begin{split}
& \int_0^t m_i^{n} \frac{1}{M^q} \sum_{j_1\cdots j_q} m_{j_1}^{n} \cdots m_{j_q}^{n} | S(x_i^{n}  \cdots x_{j_q}^{n}) - S(x_i^{n-1}  \cdots x_{j_q}^{n-1})| d\tau \\
\leq &
\int_0^t m_i^{n} \frac{1}{M^q} \sum_{j_1\cdots j_q} m_{j_1}^{n} \cdots m_{j_q}^{n} L_S ( \|x_i^{n}-x_i^{n-1} \|+ \| x_{j_1}^{n}- x_{j_1}^{n-1}\|  \cdots + \| x_{j_q}^{n}- x_{j_q}^{n-1}\| ) d\tau \\
\leq & \int_0^t m_i^{n} L_S \|x_i^{n}-x_i^{n-1} \| d\tau + q \int_0^t \frac{1}{M} \sum_{j=1}^N m_{j}^{n} L_S  \|x_j^{n}-x_j^{n-1} \| d\tau. \\
\end{split}
\]
Thus, 
\[
\begin{split}
|m_i^{n+1}-m_i^n| 
\leq & \bS \int_0^t |m_i^{n+1}-m_i^{n}| d\tau  + q \bS \frac{M_T}{M} \int_0^t \sum_{j=1}^N |m_j^{n+1}-m_j^{n}| d\tau \\
& + M_T \int_0^t L_S \|x_i^{n}-x_i^{n-1} \| d\tau + q  L_S \frac{M_T}{M} \int_0^t  \sum_{j=1}^N  \|x_j^{n}-x_j^{n-1} \| d\tau.
\end{split}
\]
Summing the terms, it holds
\[
\begin{split}
\sumi |m_i^{n+1}-m_i^n| 
\leq & \bS(1+ q N \frac{M_T}{M}) \int_0^t \sum_{j=1}^N |m_j^{n+1}-m_j^{n}| d\tau 
 + M_T L_S(1+ q \frac{N}{M}) \int_0^t  \sum_{i=1}^N  \|x_j^{n}-x_j^{n-1} \| d\tau.
\end{split}
\]

Summarizing, we have
\[
\sum_{i=1}^N \|x_i^{n+1}-x_i^n\| \leq 
 C_1 \int_0^t \sum_{i=1}^N \| x_i^{n+1}-x_i^{n}\| d\tau
+C_2 \int_0^t \sum_{i=1}^N |m_i^n-m_i^{n-1}| d\tau.
\]
and
\[
\sumi |m_i^{n+1}-m_i^n| 
\leq  C_3 \int_0^t \sum_{i=1}^N |m_i^{n+1}-m_i^{n}| d\tau + C_3 \int_0^t  \sum_{i=1}^N  \|x_i^{n}-x_i^{n-1} \| d\tau.
\]
where $C_1=2N \frac{M_T L_\phi}{M}$, $C_2=\frac{N}{M} (\Phi_0+ 2 L_\phi X_T)$, $C_3= \bS(1+ q N \frac{M_T}{M})$ and $C_4=M_T L_S(1+ q \frac{N}{M}) $.
Let $u_n:= \sum_{i=1}^N \|x_i^{n+1}-x_i^n\|+\sum_{i=1}^N |m_i^{n+1}-m_i^{n}| $ for all $n\in\N$.
Then 
\[
u_n(t) \leq A_T \int_0^t u_n(\tau) d\tau + A_T \int_0^t u_{n-1}(\tau) d\tau
\]
where $A_T:= \max(C_1,C_2,C_3,C_4)$.
From Gronwall's lemma, for all $t\in [0,T]$,
\[
u_n(t) \leq A_T e^{A_T T} \int_0^t u_{n-1}(\tau) d\tau 
\]
which, by recursion, implies
\[
u_n(t) \leq \frac{(A_T e^{A_T T})^n}{n!} \sup_{[0,T]} u_0.
\]
This is the general term of a convergent series.
Thus, for all $n,p\in\N$, 
\[
\sumi \| x_i^{n+p}-x_i^n \| \leq  \sum_{k=n}^{n+p-1} \sumi \|x_i^{k+1}-x_i^k\| \leq \sum_{k=n}^{n+p-1} u_k \xrightarrow[n,p\rightarrow+\infty]{} 0.
\]
This proves that $(x^n)_{n\in\N}$ is a Cauchy sequence in the Banach space $C([0,T], (\R^d)^N)$ for the norm $x\mapsto \sup_{t\in[0,T]}\sum_{i=1}^N \|x_i^n(t)\|$. Similarly, $(m^n)_{n\in\N}$ is a Cauchy sequence in $C([0,T], \R_+^N)$ for the norm $m\mapsto \sup_{t\in[0,T]} \sum_{i=1}^N |m_i^n(t)|$.
One can easily show that their limits $(x,m)$ satisfy the system of ODEs \eqref{eq:syst-micro-intro}. Furthermore, since the bounds $X_T$ and $M_T$ do not depend on $n$, it holds $\|x_i(t)\|\leq X_T$ and $|m_i(t)|\leq M_T$ for all $t\in [0,T]$ and every $i\in\elts$.
This concludes the proof of existence.

Let us now deal with uniqueness.
Suppose that $(x,m)$ and $(p,m)$ are two couples of solutions to the Cauchy problem \eqref{eq:syst-micro-intro} with the same initial conditions $(\xinit, \minit)$.
Similar computations to the ones done previously give
\[
\sumi \|x_i(t)-y_i(t)\| + \sumi |m_i(t)-p_i(t)| \leq A_T \int_0^t (\sumi \|x_i(\tau)-y_i(\tau)\| + \sumi |m_i(\tau)-p_i(\tau)| ) d\tau
\]
By Gronwall's lemma
\[
\sumi \|x_i(t)-y_i(t)\| + \sumi |m_i(t)-p_i(t)| \leq ( \sumi \|x_i(0)-y_i(0)\| + \sumi |m_i(0)-p_i(0)| ) e^{A_T t} = 0
\]
which concludes uniqueness.

\end{proof}

\subsection{Properties of another numerical scheme} \label{Sec:SchemePRT}

Here, we show that the numerical scheme $\tilde{\S}$ introduced in \cite{PiccoliRossiTournus20} preserves neither mass, nor positivity, nor total variation.\\
We remind the definition of the scheme $\tilde{\S}$. For all $k\in\N$, let $\dt:=2^{-k}T$ and $\mu_0^k=\mu_0$. For all $i\in\{0,\cdots, 2^k-1\}$, $\mu^k_{(i+1)\dt}$ is defined from $\mu^k_{i\dt}$ as:
\[
\mu^k_{(i+1)\dt} = \phi^{V[\mu_{i\dt}^k]}\#\mu_{i\dt}^k + \dt \, h[\mu^k_{i\dt}].
\]
Let $d=1$, and
consider a velocity field $V[\mu]$ and a source term $h[\mu]$ defined by: for all $\mu\in\P(\R)$, for all $x\in\R$,
\[
V[\mu](x)=\mu(\R)\, v(x):=\left\{\begin{array}{ll}
\mu(\R)\,\sign(x) \quad & \text{ for } |x|\geq 1 \\
\mu(\R)\, x \quad & \text{ for } |x|< 1 
\end{array}
\right.
\quad 
\text{ and }
\quad
h[\mu](x) = \left(\int_\R (x-y) d\mu(y)\right)\mu(x).
\]
The vector field is a slightly modified version of \eqref{eq:Vdef}, where $V[\mu](x) = \int_\R \phi(x,y)d\mu(y)$, with $\phi(x,y) = v(x)$.
One can easily show that on a fixed time interval $[0,T]$, $h$ satisfies the assumptions given in Hyp. \ref{hyp:S}. Then from Theorem \ref{Th:existenceuniqueness}, if $\mu_0\in\P_c(\R)$, we expect the solution $\mu_t$ to the transport PDE with source \eqref{eq:transportsource} to remain a probability measure at all time.\\
Let $\mu_0:=\frac{1}{2}(\delta_{1}+\delta_{-1})\in \P_c(\R).$ Initially, the center of mass is $\int_\R yd\mu_0(y)=0$.
One can compute the evolution of $\mu_0$ explicitely at each time step $i\dt$.
For $i=1$,
\[\mu^k_{\dt} = \frac{1}{2}(\delta_{1+\dt}+\dt\, \delta_{1}-\dt\,\delta_{-1}+\delta_{-1-\dt}).\]
Hence at the first time step, positivity is already lost. We notice that at this stage, the total mass is conserved as $\mu^k_{\dt}(\R)=\mu^k_{0}(\R)=1$, but the total variation is not: $|\mu^k_{\dt}| = 1+\dt$. The center of mass is not conserved either:  $\int_\R yd\mu^k_{\dt}(y)=\dt$.\\
For $i=2$,
\[\mu^k_{2\dt} =  \frac{1}{2}(\delta_{1+2\dt}+2\dt\,\delta_{1+\dt}+\dt^2(1-\dt)\, \delta_{1} +  \dt^2(1+\dt)\, \delta_{-1} - 2\dt(1+\dt)\,\delta_{-1-\dt}+\delta_{-1-2\dt}).\]
Again, it holds $\mu^k_{2\dt}(\R)=1$, but none of the other quantities are conserved: $|\mu^k_{2\dt}|=1+2(\dt+\dt^2)$ and $\int_\R yd\mu^k_{\dt}(y)=2\dt+3\dt^2$. \\
Observe that this numerical scheme also has a dispersive effect, due to the simultaneous treatment of the transport and source operators. Whereas the transport term correctly transports the Dirac masses initially located at $1$ and $-1$, the source term creates new Dirac masses along their trajectory. Hence the solution $\mu^k_{n\dt}$ to the scheme at time $n\dt$ is composed of $2(n+1)$ Dirac masses instead of the two Dirac masses composing the exact solution to the PDE.

For comparison, we provide the evolution of $\mu_0$ obtained with the scheme $\S$ defined in Section \ref{Sec:numscheme}, starting with $\mu_0= \frac{1}{2}(\delta_1+\delta_{-1})$. 
It holds
\[
\begin{split}
& \mu^k_{\dt} =  \frac{1}{2}((1+\dt)\delta_{1+\dt}+(1-\dt)\delta_{-1-\dt})\\
& \mu^k_{2\dt} =  \frac{1}{2}((1+2\dt+\dt^2-\dt^3-\dt^4)\delta_{1+\dt}+(-1-2\dt-\dt^2+\dt^3+\dt^4)\delta_{-1-2\dt}).
\end{split}
\]
Observe that positivity is preserved, as well as the total mass $\mu(\R^d)$.
Figure \ref{Fig:SchemeExamples} illustrates schematically the evolutions of $\mu_0$ by $\S$ and $\tilde{\S}$.
\begin{figure}
\centering
\includegraphics[scale=1]{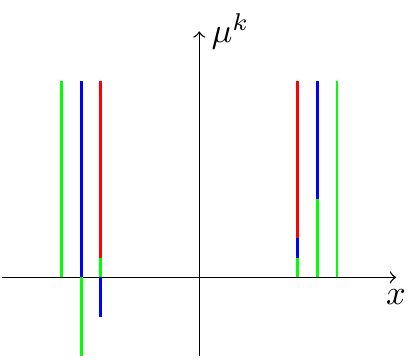} \quad
\includegraphics[scale=1]{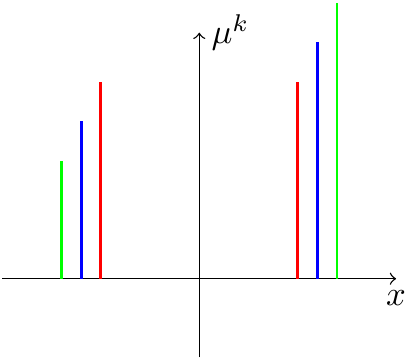} 
\caption{Comparison of the evolutions of $\mu$ by the two numerical schemes $\tilde{\S}$ (left) and $\S$ (right) at time $t=0$ (red), $t=\dt$ (blue) and $t=2\dt$ (green). Positivity is not preserved in the scheme $\tilde{\S}$. On the other hand, with $\S$, $\mu^k$ remains a probability measure at all time.}\label{Fig:SchemeExamples}
\end{figure}

\fi

\newpage
\bibliography{biblio}

\balance

\end{document}